\bmdefine{\sss}{s}
\bmdefine{\vvv}{v}
\DeclareMathAlphabet{\mathscr}{U}{rsfs}{m}{n}
\newcommand{\msCCC}{\mathscr{C}}
\newcommand{\msOOO}{\mathscr{O}}
\newcommand{\msPPP}{\mathscr{P}}
\newcommand{\NNN}{\mathbb{N}}
\newcommand{\ZZZ}{\mathbb{Z}}
\newcommand{\QQQ}{\mathbb{Q}}
\newcommand{\RRR}{\mathbb{R}}
\newcommand{\KKK}{\mathbb{K}}
\newcommand{\RRRRR}{{\mathcal R}}
\newcommand{\SSSSS}{{\mathcal S}}
\newcommand{\covers}{\mathrel{\cdot\!\!\!>}}
\newcommand{\covered}{\mathrel{<\!\!\!\cdot}}
\newcommand{\define}{\mathrel{:=}}
\newcommand{\cm}{Cohen-Macaulay}
\newcommand{\joinirred}{join-irreducible}
\newcommand{\rank}{\mathrm{rank}}
\newcommand{\relint}{{\rm{relint}}}
\newcommand{\Div}{{\mathrm{Div}}}
\newcommand{\condn}{{condition N}}
\newcommand{\scn}{{sequence with condition N}}
\newcommand{\sscn}{{sequences with condition N}}
\newcommand{\dmax}{{d_{\max}}}
\newcommand{\xip}{\xi^{+}}
\newcommand{\xipup}{\xi^{+\uparrow}}
\newcommand{\xippup}{\xi^{+'\uparrow}}
\newcommand{\xipdown}{\xi^{+\downarrow}}
\newcommand{\xippdown}{\xi^{+'\downarrow}}
\newcommand{\etap}{\eta^{+}}
\newcommand{\etapup}{\eta^{+\uparrow}}
\newcommand{\etappup}{\eta^{+'\uparrow}}
\newcommand{\etapdown}{\eta^{+\downarrow}}
\newcommand{\etappdown}{\eta^{+'\downarrow}}
\newcommand{\dist}{{\mathrm{dist}}}
\newcommand{\qdist}[1]{{q^{(#1)}\dist}}
\newcommand{\qndist}{{q^{(n)}\dist}}
\newcommand{\qmmdist}{{q^{(-m)}\dist}}
\newcommand{\qedist}{q^{(\epsilon)}\dist}
\newcommand{\qonedist}{q^{(1)}\dist}
\newcommand{\qmonedist}{q^{(-1)}\dist}
\newcommand{\qone}{q^{(1)}}
\newcommand{\qmone}{q^{(-1)}}
\newcommand{\qe}{q^{(\epsilon)}}
\newcommand{\qonered}{$\qone$-reduced}
\newcommand{\qmonered}{$\qmone$-reduced}
\newcommand{\qered}{$q^{(\epsilon)}$-reduced}
\newcommand{\se}{\SSSSS^{(\epsilon)}}
\newcommand{\omegae}{\omega^{(\epsilon)}}
\newcommand{\kcp}{\KKK[\msCCC(P)]}
\newcommand{\kop}{\KKK[\msOOO(P)]}
\newcommand{\ekp}{E_\KKK[\msPPP]}
\newcommand{\ekcp}{E_\KKK[\msCCC(P)]}
\newcommand{\rkh}{\RRRRR_\KKK[H]}
\newtheorem{thm}{Theorem}[section]
\newtheorem{fact}[thm]{Fact}
\newtheorem{example}[thm]{Example}
\newtheorem{lemma}[thm]{Lemma}
\newtheorem{cor}[thm]{Corollary}
\newtheorem{definition}[thm]{Definition}
\newtheorem{prop}[thm]{Proposition}
\newtheorem{remark}[thm]{Remark}
\newcommand{\bigzerou}{\smash{\lower1.7ex\hbox{\bg 0}}}
\newcommand{\bigastu}{\smash{\lower1.7ex\hbox{\bg *}}}
\newcommand{\refeq}[1]{(\ref{#1})}
\numberwithin{equation}{section}
\newcommand{\mylabel}[1]{{\label{#1}\tt [#1]}}
\let\mylabel=\label
\title{%
On the canonical ideal of the Ehrhart ring of the chain polytope of a poset%
}
\author{Mitsuhiro MIYAZAKI%
}
\date{\normalsize
Department of Mathematics, Kyoto University of Education,\\
1 Fujinomori, Fukakusa, Fushimi-ku, Kyoto, 612-8522, Japan}
\begin{document}

%\thispagestyle{empty}

%\setcounter{page}{0}
%\thispagestyle{empty}
%\clearpage
\maketitle

\sloppy

\begin{abstract}
Let $P$ be a poset, $\msOOO(P)$ the order polytope of $P$ and 
$\msCCC(P)$ the chain polytope of $P$.
In this paper, we study the canonical ideal of the Ehrhart ring $\kcp$ of
$\msCCC(P)$ over a field $\KKK$ and characterize the level 
(resp.\ anticanonical level) property of $\kcp$ by a combinatorial structure of $P$.
In particular, we show that if $\kcp$ is level (resp.\ anticanonical level), 
then so is $\kop$.
We exhibit examples which show the converse does not hold.

Moreover, we show that the symbolic powers of the canonical ideal of $\kcp$
are identical with ordinary ones and degrees of the generators of the canonical
and anticanonical ideals are consecutive integers.
\\
Key Words:chain polytope, order polytope, Ehrhart ring, level ring
\\
MSC:52B20, 13H10, 13F50,  06A11, 06A07
\end{abstract}

\section{Introduction}

Let $P$ be a finite partially ordered set (poset for short).
Stanley \cite{sta3} investigated two convex polytopes associated with $P$, the order polytope 
$\msOOO(P)$ and the chain polytope $\msCCC(P)$.
He showed a surprising  connection between $\msOOO(P)$ and $\msCCC(P)$.
For example 
%there is a one to one correspondence between vertices of $\msOOO(P)$ and $\msCCC(P)$ and
the Ehrhart rings 
$\kop$ and $\kcp$ 
of $\msOOO(P)$ and $\msCCC(P)$
%whrer $\KKK$ is a filed, 
have the same Hilbert series.

About the same time, Hibi \cite{hib} constructed and studied the ring $\rkh$, where $H$ is
a finite distributive lattice, which nowadays called the Hibi ring, in the study
of algebras with straightening law (ASL).
It turned out that $\rkh$ is identical with the Ehrhart ring $\kop$ of the order
polytope $\msOOO(P)$,
where $P$ is the set of \joinirred\ elements of $H$.

After that, Hibi rings, i.e., the ring of the form $\kop$ for some poset $P$,
are extensively studied by many researchers, including the present author.
In our previous papers, we studied many aspects of Hibi rings, especially described generators
of the canonical ideals of Hibi rings and characterized level and anticanonical level properties
of Hibi rings \cite{mo,mf}.
The key notion to investigate the generators of the canonical ideal of a Hibi ring is
``\scn'' (see below).

In this paper, we investigate the canonical ideal of the Ehrhart ring $\kcp$ of the chain 
polytope $\msCCC(P)$ of a poset $P$.
The key notion in this case is ``\scn' ''.

\begin{trivlist}
\item[\hskip\labelsep\bf Definition \ref{def:condn}]
%\begin{definition}
%\rm
%\mylabel{def:condn}
Let $y_0$, $x_1$, $y_1$, $x_2$, \ldots, $y_{t-1}$, $x_t$ be a sequence of elements 
of $P$.
We say that $y_0$, $x_1$, $y_1$, $x_2$, \ldots, $y_{t-1}$, $x_t$ satisfy \condn\ 
(resp.\ \condn') if
\begin{enumerate}
\item
$y_0>x_1<y_1>x_2<\cdots<y_{t-1}>x_t$ and
\item
if $i\leq j-2$, then $y_i\not\geq x_j$
(resp.\ $y_i\not>x_j$).
\end{enumerate}
We define that an empty sequence (i.e., $t=0$) satisfy \condn\ (resp. \condn').
%\end{definition}
\end{trivlist}
%
%%%%%%%%%%%%%%%%%%%%%%%%%
\iffalse
When considering a sequence $y_0$, $x_1$, \ldots, $y_{t-1}$, $x_t$ with \condn\
(resp.\ \condn'), we usually set $x_0=-\infty$ and $y_t=\infty$ and consider a
sequence $x_0$, $y_0$, $x_1$, \ldots, $y_{t-1}$, $x_t$, $y_t$ in $P^\pm$.
\fi
%%%%%%%%%%%%%%%%%%%%%%

In the case of Hibi rings, i.e., the case of $\kop$, 
a Laurent monomial associated to a map $\nu\colon P^-\to\ZZZ$ is a generator of the
canonical (resp.\ anticanonical) ideal if and only if there is a \scn\ with an appropriate
relation with $\nu$,
where $P^-=P\cup\{-\infty\}$.
Moreover, the symbolic powers of the canonical ideal are identical with the ordinary ones
and the degrees of the generators of the canonical and anticanonical ideals are consecutive
 integers, i.e., if there are generators of the canonical (resp.\ anticanonical) ideal of
degrees $d_1$ and $d_2$ with $d_1<d_2$, then for any integer $d$ with $d_1<d<d_2$,
there is a generator of the canonical (resp.\ anticanonical) ideal with degree $d$.
Further, $\kop$ is level (resp.\ anticanonical level) if and only if 
``\qonered'' (resp.\ ``\qmonered'') \scn\ is the empty sequence only.
See Definition \ref{def:qered} for the definition of the notion \qonered\ (resp.\
\qmonered).

In this paper, we show that the almost identical phenomena occur for $\kcp$.
A Laurent monomial associated to a map $\xi\colon P^-\to\ZZZ$ is a generator
of the canonical (resp.\ anticanonical) ideal if and only if there is a \scn' with
an appropriate relation 
with $\xi$
(which is different from the one in the case of $\kop$, see Proposition \ref{prop:gen equiv}).
Moreover, the symbolic powers of the canonical ideal are identical with the ordinary
ones (Theorem \ref{thm:symbolic power})
and the generators of the canonical and anticanonical ideals are consecutive 
integers (Theorem \ref{thm:gen deg}).
Further, $\kcp$ is level (resp.\ anticanonical level) if and only if 
\qonered\ (resp.\ \qmonered) \scn' is the empty sequence only (Theorem \ref{thm:level cri}).
Since \condn' is weaker than \condn, it follows that if $\kcp$ is level (resp.\ 
anticanonical level), then so is $\kop$.
We also exhibit examples which show that the converse does not hold.
See Examples \ref{ex:level} and \ref{ex:antican level}.

\section{Preliminaries}

\mylabel{sec:pre}

In this paper, all rings and algebras are assumed to 
be commutative 
with identity element 
unless stated otherwise.
%We also assume that a ring homomorphism maps the identity element to the identity element.

First we state notations about sets used in this paper.
We denote by $\NNN$ the set of nonnegative integers, by
$\ZZZ$ the set of integers, by
$\QQQ$ the set of rational numbers and by
$\RRR$ the set of real numbers.
%by $\RRR_{>0}$ the set of positive real numbers and
%by $\RRR_{\geq0}$ the set of nonnegative real numbers.
We denote the cardinality of a set $X$ by $\#X$.
For nonempty sets $X$ and $Y$, we denote the set of maps from $X$ to $Y$ by $Y^X$.
If $X$ is a finite set, we identify $\RRR^X$ with the Euclidean space 
$\RRR^{\#X}$.
Let $X$ be a set  and $A$ a subset of $X$.
We define the characteristic function $\chi_A\in\RRR^X$ by
$\chi_A(x)=1$ for $x\in A$ and $\chi_A(x)=0$ for $x\in X\setminus A$.

Now we define a symbol which is frequently used in this paper.

\begin{definition}
\mylabel{def:xi+}
\rm
Let $X$ be a finite set and $\xi\in\RRR^X$.
For $B\subset X$, we set $\xip(B)\define\sum_{b\in B}\xi(b)$.
\end{definition}
Next we define operations of elements in $\RRR^X$.

\begin{definition}\rm
Let $X$ be a set.
For $\xi$, $\xi'\in\RRR^X$ and $a\in \RRR$,
we define
maps $\xi\pm\xi'$ and $a\xi$ 
by
$(\xi\pm\xi')(x)=\xi(x)\pm\xi'(x)$ and
$(a\xi)(x)=a(\xi(x))$
% and
%$\lfloor\frac{\xi}{n}\rfloor(x)=\lfloor\frac{\xi(x)}{n}\rfloor$
for $x\in X$.
%, where $\lfloor\frac{\xi(x)}{n}\rfloor$ is the largest integer
%not exceeding $\frac{\xi(x)}{n}$.
\end{definition}
Note that if $X$ is a finite set, $B$ is a subset of $X$ and $a\in\RRR$, then
$(\xi\pm\xi')^+(B)=\xip(B)\pm(\xi')^+(B)$
and $(a\xi)^+(B)=a(\xip(B))$.
We denote $f\in\RRR^X$ with $f(x)=0$ for any $x\in X$ by 0.

Next we recall some definitions concerning finite partially
ordered sets (poset for short).
Let $Q$ be a finite poset.
We denote the set of maximal (resp.\ minimal) elements of $Q$
by $\max Q$ (resp.\ $\min Q$).
If $\max Q$ (resp.\ $\min Q$) consists of one element $z$, we often
abuse notation and write $z=\max Q$ (resp. $z=\min Q$).
A chain in $Q$ is a totally ordered subset of $Q$.
For a chain $X$ in $Q$, we define the length of $X$ as $\#X-1$.
The maximum length of chains in $Q$ is called the rank of $Q$ and denoted  $\rank Q$.
%If every maximal chain of $Q$ has the same length, we say that $Q$ is pure.
A subset $A$ of $Q$ is an antichain in $Q$ if every pair of two elements in $A$
are incomparable by the order of $Q$.
If $I\subset Q$ and
$x\in I$, $y\in Q$, $y\leq x\Rightarrow y\in I$,
then we say that $I$ is a poset ideal of $Q$.

Let $+\infty$ (resp.\ $-\infty$) be a new element which is not contained in $Q$.
We define a new poset $Q^+$ (resp.\ $Q^-$) whose base set is $Q\cup\{+\infty\}$
(resp.\ $Q\cup\{-\infty\}$) and
$x<y$ in $Q^+$ (resp.\ $Q^-$) if and only if $x$, $y\in Q$ and $x<y$ in $Q$
or $x\in Q$ and $y=+\infty$
(resp.\ $x=-\infty$ and $y\in Q$).
We set $Q^\pm\define(Q^+)^-$.

Let $Q'$ be an arbitrary poset. (We apply the following definition for
 $Q'=Q$, $Q^+$, $Q^-$ or $Q^\pm$.)
If $x$, $y\in Q'$, $x<y$ and there is no $z\in Q'$ with $x<z<y$,
we say that $y$ covers $x$ and denote 
$x\covered y$ or $y\covers x$.
For $x$, $y\in Q'$ with $x\leq y$, we set
$[x,y]_{Q'}\define\{z\in Q'\mid x\leq z\leq y\}$.
Further, for $x$, $y\in Q'$ with $x<y$, we set
$[x,y)_{Q'}\define\{z\in Q'\mid x\leq z< y\}$, 
$(x,y]_{Q'}\define\{z\in Q'\mid x< z\leq y\}$ and
$(x,y)_{Q'}\define\{z\in Q'\mid x< z< y\}$.
We denote $[x,y]_{Q'}$ 
(resp.\ $[x,y)_{Q'}$, $(x,y]_{Q'}$ or $(x,y)_{Q'}$) 
as $[x,y]$ 
(resp.\ $[x,y)$, $(x,y]$ or $(x,y)$)
if there is no fear of confusion.

\begin{definition}\rm
Let $Q'$ be an arbitrary finite poset and 
let $x$ and $y$ be elements of $Q'$ with $x\leq y$.
A saturated chain from $x$ to $y$ is a sequence of elements 
$z_0$, $z_1$, \ldots, $z_t$  of $Q'$ such that
$$
x=z_0\covered z_1\covered \cdots\covered z_t=y.
$$
\end{definition}
Note that the length of the chain $z_0$, $z_1$, \ldots, $z_t$ is $t$.

\begin{definition}\rm
Let $Q'$, $x$ and $y$ be as above.
We define 
$\dist(x,y)\define\min\{t\mid$ there is a saturated chain from $x$ to $y$
with length $t.\}$
and call $\dist(x,y)$ the distance of $x$ and $y$.
Further, for $n\in \ZZZ$, we define 
$\qndist(x,y)\define\max\{nt\mid$
there is a saturated chain from $x$ to $y$
with length $t.\}$
and call $\qndist(x,y)$ the $n$-th quasi-distance of $x$ and $y$.
\end{definition}
Note that $\qdist{-1}(x,y)=-\dist(x,y)$ and
$\qdist{1}(x,y)=\rank([x,y])$.
Note also that $\qndist(x,z)+\qndist(z,y)\leq\qndist(x,y)$
for any $x$, $z$, $y$ with $x\leq z\leq y$.
Further, $\qndist(x,y)=n$ if $x\covered y$,
$\qndist(x,x)=0$ and $\qdist{mn}(x,y)=m\qndist(x,y)$ for any positive integer $m$.

Now we state the following.

\begin{definition}\rm
Let $Q$ be a finite poset and $\xi\in\RRR^Q$.
For $z\in Q$, we set
$
\xipup(z)\define\max\{\sum_{\ell=0}^s\xi(z_\ell)\mid
$
there exist elements $z_0$, $z_1$, \ldots, $z_s$ with
$z=z_0\covered z_1\covered\cdots\covered z_s\covered+\infty$ in $Q^+\}$.
(resp.\
$
\xippup(z)\define\max\{\sum_{\ell=1}^s\xi(z_\ell)\mid
$
there exist elements $z_0$, $z_1$, \ldots, $z_s$ with
$z=z_0\covered z_1\covered\cdots\covered z_s\covered+\infty$ in $Q^+\}$,
where we define the empty sum to be 0).
Similarly, we set
$
\xipdown(z)\define\max\{\sum_{\ell=0}^s\xi(z_\ell)\mid
$
there exist elements $z_0$, $z_1$, \ldots, $z_s$ with
$z=z_0\covers z_1\covers\cdots\covers z_s\covers-\infty$ in $Q^-\}$
(resp.\
$
\xippdown(z)\define\max\{\sum_{\ell=1}^s\xi(z_\ell)\mid
$
there exist elements $z_0$, $z_1$, \ldots, $z_s$ with
$z=z_0\covers z_1\covers\cdots\covers z_s\covers-\infty$ in $Q^-\}$).
\end{definition}
It is clear from the definition that
$\xipup(z)=\xi(z)+\xippup(z)$,
$\xipdown(z)=\xi(z)+\xippdown(z)$ and
$\xipup(z)+\xippdown(z)=\xippup(z)+\xipdown(z)=
\max\{\xip(C)\mid C$ is a maximal chain in $Q$ with $C\ni z\}$.
Note that $\xipup_1(z)+\xipup_2(z)=(\xi_1+\xi_2)^{+\uparrow}(z)$
does not hold in general.

The following 
%two lemmas are easily proved.
lemma is easily proved.

\begin{lemma}
\mylabel{lem:path}
Let $Q$ be a finite poset and $\xi\in\RRR^Q$.
\begin{enumerate}
\item
\mylabel{item:plus path}
Suppose that $z\in Q$, $z=z_0\covered z_1\covered\cdots\covered z_s\covered+\infty$ in $Q^+$
and
$\xipup(z)=\sum_{\ell=0}^s\xi(z_\ell)$
(resp.\
$z=z_0\covers z_1\covers\cdots\covers z_s\covers-\infty$ in $Q^-$
and
$\xipdown(z)=\sum_{\ell=0}^s\xi(z_\ell)$).
Then for any $k$ with $0\leq k\leq s$,
$\xipup(z_k)=\sum_{\ell=k}^s\xi(z_\ell)$ and $\xippup(z_k)=\sum_{\ell=k+1}^s\xi(z_\ell)$
(resp.\
$\xipdown(z_k)=\sum_{\ell=k}^s\xi(z_\ell)$ and $\xippdown(z_k)=\sum_{\ell=k+1}^s\xi(z_\ell)$).
\item
\mylabel{item:prime eq max}
Let $z\in Q\setminus\max Q$ (resp.\ $z\in Q\setminus \min Q$).
Then $\xippup(z)=\max\{\xipup(z')\mid z'\covers z\}$
(resp.\ $\xippdown(z)=\max\{\xipdown(z')\mid z'\covered z\}$).
\item
\mylabel{item:chain path}
Suppose that $w_1$, $w_2\in Q$, $w_1<w_2$ and 
$w_1=z_0\covered z_1\covered\cdots\covered z_s=w_2$.
Then
$\xippup(w_1)\geq\sum_{\ell=1}^{s-1}\xi(z_\ell)+\xipup(w_2)$
and
$\xippdown(w_2)\geq\sum_{\ell=1}^{s-1}\xi(z_\ell)+\xipdown(w_1)$.
\end{enumerate}
\end{lemma}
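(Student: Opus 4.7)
The plan is to read all three assertions directly off the definition of $\xipup$ and $\xippup$ as maxima over saturated chains, using simple prefix/suffix manipulations. I will argue only the ``up'' forms; the ``down'' statements follow by reversing the order of $Q$.

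For part \refeq{item:plus path} I would argue by extremality. The suffix $z_k \covered z_{k+1} \covered \cdots \covered z_s \covered +\infty$ of the given chain gives $\xipup(z_k) \geq \sum_{\ell=k}^s \xi(z_\ell)$. If this inequality were strict, I could take a saturated chain from $z_k$ to $+\infty$ realizing $\xipup(z_k)$, prepend the initial segment $z_0 \covered z_1 \covered \cdots \covered z_{k-1}$, and thereby exhibit a saturated chain from $z$ to $+\infty$ whose sum exceeds $\xipup(z) = \sum_{\ell=0}^s \xi(z_\ell)$, a contradiction. The companion identity for $\xippup(z_k)$ then follows from the relation $\xipup(z_k) = \xi(z_k) + \xippup(z_k)$ noted just before the lemma, except in the boundary case $k = s$, where $z_s \covered +\infty$ forces $z_s \in \max Q$, so the only admissible chain is trivial and $\xippup(z_s) = 0$ matches the empty sum.

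For part \refeq{item:prime eq max} the hypothesis $z \notin \max Q$ means that $z \covered +\infty$ fails in $Q^+$, so every chain appearing in the definition of $\xippup(z)$ has length $s \geq 1$. Partitioning these chains according to the first step $z_1 = z' \covers z$ and splitting off $\xi(z')$ from the truncated sum $\sum_{\ell=1}^s \xi(z_\ell)$ gives $\xippup(z) = \max_{z' \covers z}(\xi(z') + \xippup(z')) = \max_{z' \covers z} \xipup(z')$, as required.

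For part \refeq{item:chain path} I would concatenate: fix a chain $w_2 = u_0 \covered u_1 \covered \cdots \covered u_t \covered +\infty$ realizing $\xipup(w_2) = \sum_{\ell=0}^t \xi(u_\ell)$ and glue it onto the given chain $w_1 = z_0 \covered \cdots \covered z_s = w_2$ at the shared vertex $z_s = w_2 = u_0$. Evaluating the $\xippup$-sum of the concatenation viewed as starting at $w_1$ omits only the index-$0$ term $\xi(w_1)$ and leaves $\sum_{\ell=1}^{s-1} \xi(z_\ell) + \xi(u_0) + \sum_{\ell=1}^t \xi(u_\ell) = \sum_{\ell=1}^{s-1} \xi(z_\ell) + \xipup(w_2)$, proving the inequality. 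The only delicate point anywhere is index bookkeeping, in particular separating $k=s$ from $k<s$ in part \refeq{item:plus path} and keeping track of the shared junction vertex in part \refeq{item:chain path}; there is no genuine obstacle beyond this.
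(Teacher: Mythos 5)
Your proof is correct; all three parts follow exactly as you argue, by extremality of suffixes, partitioning chains by their first step, and concatenation at the junction vertex. The paper omits the proof entirely (it merely remarks that the lemma ``is easily proved''), and your argument is the routine verification the author had in mind, so there is nothing to compare beyond noting that your separate treatment of the case $k=s$ in part (1) is harmless but unnecessary, since the identity $\xipup(z)=\xi(z)+\xippup(z)$ already covers maximal $z$ via the empty-sum convention.
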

Moreover, we see the following fact.

\begin{lemma}
\mylabel{lem:plus ineq}
Let $Q$ be a finite poset,
$z\in Q$ and $n$ an integer.
Set $M\define\max\{\xip(C)\mid C$ is a maximal chain in $Q\}$.
Then 
$$
\xippup(z)+\xi(z)+\xippdown(z)
=\xipup(z)+\xippdown(z)
=\xippup(z)+\xipdown(z)
\leq M.
$$
%Further, if $\xi(w)\geq n$ for any $w\in Q$, $x$, $y\in Q$ and $x<y$, then
Further, if $x$, $y\in Q$, $x<y$ and $\xi(w)\geq n$ for any $w\in (x,y)$,  then
\begin{eqnarray*}
&&\xipdown(x)+\qndist(x,y)-n+\xipup(y)\leq M,\\
&&\xippdown(y)\geq\qndist(x,y)-n+\xipdown(x)\mbox{ and}\\
&&\xippup(x)\geq\qndist(x,y)-n+\xipup(y).
\end{eqnarray*}
\end{lemma}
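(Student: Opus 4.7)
The first assertion of the lemma (the chain of equalities together with the bound $\leq M$) is essentially a tautology once we unwind the definitions. The defining identities $\xipup(z)=\xi(z)+\xippup(z)$ and $\xipdown(z)=\xi(z)+\xippdown(z)$ recorded just before Lemma~\ref{lem:path} give the three equal quantities, and their common value was identified in that same paragraph as $\max\{\xip(C)\mid C\text{ is a maximal chain in }Q\text{ with }C\ni z\}$, which is of course $\leq M$. So the first part requires no fresh argument.

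For the three inequalities in the second half, my plan is to pick a saturated chain witnessing the quasi-distance and feed it into Lemma~\ref{lem:path}\ref{item:chain path}. Fix $t$ and a saturated chain $x=z_0\covered z_1\covered\cdots\covered z_t=y$ with $nt=\qndist(x,y)$. Each intermediate vertex $z_\ell$ with $1\leq\ell\leq t-1$ lies in the open interval $(x,y)$, so $\xi(z_\ell)\geq n$ by hypothesis, and therefore $\sum_{\ell=1}^{t-1}\xi(z_\ell)\geq(t-1)n=\qndist(x,y)-n$. Substituting this lower bound into the two inequalities of Lemma~\ref{lem:path}\ref{item:chain path} (applied with $w_1=x$, $w_2=y$) yields at once
$$\xippup(x)\geq\qndist(x,y)-n+\xipup(y),\qquad \xippdown(y)\geq\qndist(x,y)-n+\xipdown(x),$$
which are the second and third displayed inequalities.

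For the remaining inequality I would add $\xipdown(x)$ to both sides of the first inequality above and then invoke the identity from the first part of the lemma at $z=x$:
$$\xipdown(x)+\qndist(x,y)-n+\xipup(y)\leq\xipdown(x)+\xippup(x)=\xipup(x)+\xippdown(x)\leq M.$$
I do not anticipate any serious obstacle. The one subtlety worth flagging is that $\qndist$ is defined by maximising $nt$, so the witnessing chain is a \emph{longest} saturated chain when $n\geq 0$ and a \emph{shortest} one when $n<0$; but the estimate $\sum_{\ell=1}^{t-1}\xi(z_\ell)\geq(t-1)n$ is insensitive to the sign of $n$, and this is all that the argument above uses, so a single line of reasoning covers every case.
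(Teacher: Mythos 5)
Your proof is correct and follows essentially the same route as the paper: both arguments take a saturated chain witnessing $\qndist(x,y)$, use $\xi(w)\geq n$ on its interior to get $\sum_{\ell=1}^{t-1}\xi(z_\ell)\geq\qndist(x,y)-n$, and feed this into Lemma \ref{lem:path} \ref{item:chain path}. The only (harmless) cosmetic difference is that the paper establishes the first inequality by explicitly concatenating the witnessing chain with chains realizing $\xipdown(x)$ and $\xipup(y)$ into a maximal chain bounded by $M$, whereas you deduce it from the third inequality together with the first assertion applied at $z=x$; both are valid.
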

\begin{proof}
The first assertion is clear from the fact that
$\xipup(z)+\xippdown(z)=\max\{\xip(C')\mid C'$
is a maximal chain in $Q$ and $C'\ni z\}$.

In order to prove the other assertions, take elements
$$
z_1, \ldots, z_s, z'_0,\ldots, z'_{s'}, z''_1,\ldots, z''_{s''}
$$
of $Q$ such that
$-\infty\covered z_1\covered\cdots\covered z_s=x=z'_0\covered\cdots\covered
z'_{s'}=y=z''_1\covered\cdots\covered z''_{s''}\covered+\infty$ in $Q^\pm$,
$\xipdown(x)=\sum_{\ell=1}^s\xi(z_\ell)$,
$\qndist(x,y)=s'n$ and
$\xipup(y)=\sum_{\ell=1}^{s''}\xi(z''_\ell)$,
Then, since 
$
z_1, \ldots, z_s, z'_1,\ldots, z'_{s'-1}, z''_1,\ldots, z''_{s''}
$
is a maximal chain in $Q$ and $\xi(z'_\ell)\geq n$ for $1\leq \ell\leq s'-1$,
we see that
\begin{eqnarray*}
&&\xipdown(x)+\qndist(x,y)-n+\xipup(y)\\
&=&\xipdown(x)+(s'-1)n+\xipup(y)\\
&\leq&\sum_{\ell=1}^s\xi(z_\ell)+\sum_{\ell=1}^{s'-1}\xi(z'_\ell)
+\sum_{\ell=1}^{s''-1}\xi(z''_\ell)\\
&\leq&M.
\end{eqnarray*}
%since $\xi(z'_\ell)\geq n$ for $1\leq \ell\leq s'-1$.
Further,
$$
%\begin{eqnarray*}
\xippdown(y)\geq\sum_{\ell=1}^{s'-1}\xi(z'_\ell)+
%\sum_{\ell=1}^s\xi(z_\ell)
\xipdown(x)
\geq\qndist(x,y)-n+\xipdown(x)
%\end{eqnarray*}
$$
and
$$
%\begin{eqnarray*}
\xippup(x)\geq\sum_{\ell=1}^{s'-1}\xi(z'_\ell)+
%\sum_{\ell=1}^{s''}\xi(z''_\ell)
\xipup(y)
\geq\qndist(x,y)-n+\xipup(y)
%\end{eqnarray*}
$$
by Lemma \ref{lem:path} \ref{item:chain path}.
\end{proof}

Next we fix notation about Ehrhart rings.
Let $X$ be a finite set with $-\infty\not\in X$ 
and $\msPPP$ a rational convex polytope in $\RRR^X$, i.e., 
a convex polytope whose vertices are contained in $\QQQ^X$.
Set $X^-\define X\cup\{-\infty\}$ and let $\{T_x\}_{x\in X^-}$ be
a family of indeterminates indexed by $X^-$.
For $f\in\ZZZ^{X^-}$, 
we define the Laurent monomial $T^f$ by
$T^f\define\prod_{x\in X^-}T_x^{f(x)}$.
We set $\deg T_x=0$ for $x\in X$ and $\deg T_{-\infty}=1$.
Then the Ehrhart ring of $\msPPP$ over a field $\KKK$ is the $\NNN$-graded subring
$$\KKK[T^f\mid f\in \ZZZ^{X^-}, f(-\infty)>0, \frac{1}{f(-\infty)}f\in\msPPP]$$
of the Laurent polynomial ring $\KKK[T_x^{\pm1}\mid x\in X][T_{-\infty}]$.
We denote the Ehrhart ring of $\msPPP$ over $\KKK$ by $\ekp$.
(We use $-\infty$ as the degree indicating element in order to be consistent with the
case of Hibi ring.)
If $\ekp$ is a standard graded algebra, i.e., generated as a $\KKK$-algebra by degree $1$
elements, we denote $\ekp$ as $\KKK[\msPPP]$.

Note that $\ekp$ is Noetherian since $\msPPP$ is rational.
Therefore normal by the criterion of Hochster \cite{hoc}.
Further, 
by the description of the canonical module of a normal affine semigroup ring
by Stanley \cite[p.\ 82]{sta2}, we see that the ideal
$$\bigoplus_{f\in\ZZZ^{X^-}, f(-\infty)>0, \frac{1}{f(-\infty)}f\in\relint\msPPP}
\KKK T^f
$$
of $\ekp$ is the canonical module of $\ekp$,
where $\relint\msPPP$ denotes the interior of $\msPPP$ in the affine span of $\msPPP$.
We call this ideal the canonical ideal of $\ekp$.

\medskip

Now we recall the definitions of order and chain polytopes \cite{sta3}.
Let $P$ be a finite poset.
The order polytope $\msOOO(P)$ and the chain polytope $\msCCC(P)$ are defined as 
follows.
$\msOOO(P)\define\{f\in\RRR^P\mid 0\leq f(x)\leq 1$ for any $x\in P$ and
if $x<y$ in $P$, then $f(x)\geq f(y)\}$,
$\msCCC(P)\define\{f\in\RRR^P\mid 0\leq f(x)$ for any $x\in P$ and 
$f^+(C)\leq 1$ for any chain in $P\}$.
The Ehrhart ring $\KKK[\msOOO(P)]$ of the order polytope of $P$ is identical with
the ring considered by Hibi \cite{hib}, which is nowadays called the Hibi ring.
We studied in our previous papers \cite{mo,mf} the canonical ideals of Hibi rings.

The main object of the study of this paper is $\ekcp$ and its canonical ideal.
We denote the canonical ideal of $\ekcp$ as $\omega$.
In order to study $\ekcp$, we first collect some basic facts on $\ekcp$.

First, the vertices of $\msCCC(P)$ are of the form $\chi_A$, where $A$ is an antichain
of $P$ (including the empty set).
Further, it is easily seen that
$\msCCC(P)=\{f\in\RRR^P\mid f(x)\geq0$ for any $x\in P$ and $f^+(C)\leq 1$ for any
maximal chain in $P\}$.
Thus, $\relint\msCCC(P)=\{f\in\RRR^P\mid f(x)>0$ for any $x\in P$ and 
$f^+(C)<1$ for any maximal chain in $P\}$.

Here, we define the following notation.

\begin{definition}
\rm
\mylabel{def:sn}
For $n\in\ZZZ$, we set
$\SSSSS^{(n)}(P)\define\{\xi\in\ZZZ^{P^-}\mid \xi(x)\geq n$
for any $x\in P$ and 
$\xi(-\infty)\geq\xip(C)+n$
for any
maximal chain $C$ in $P\}$.
\end{definition}
In the following, we fix a finite poset $P$ and abbreviate $\SSSSS^{(n)}(P)$ as $\SSSSS^{(n)}$
and apply the above definitions by setting $Q=P$, $P^+$, $P^-$ or $P^\pm$.

By the above consideration, we can describe $\ekcp$ and $\omega$ by using
this notation.
$$
\ekcp=\bigoplus_{\xi\in\SSSSS^{(0)}}\KKK T^\xi
\quad\mbox{and}\quad
\omega=\bigoplus_{\xi\in\SSSSS^{(1)}}\KKK T^\xi.
$$

Finally in this section, we note 
%that $\kcp$ is a standard graded $\KKK$-algebra.
the following fact which seems to be a folklore.

\begin{prop}
\mylabel{prop:kcp stand}
$\ekcp$ is a standard graded $\KKK$-algebra.
%, i.e., $\ekcp$ is generated as an algebra over $\KKK$ by homogeneous elements of degree 1.
\end{prop}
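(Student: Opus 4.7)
The plan is to reduce the standardness of $\ekcp$ to a combinatorial decomposition in $\NNN^P$ and prove that by a greedy induction. First, I identify the degree-$1$ piece: a monomial $T^\eta$ of degree $1$ corresponds to $\eta \in \SSSSS^{(0)}$ with $\eta(-\infty) = 1$, and the conditions $\eta(x) \in \NNN$ for $x \in P$ together with $\eta^+(C) \leq 1$ for every maximal chain $C$ (equivalently, for every chain) force $\eta|_P = \chi_A$ for some antichain $A$ of $P$ (possibly $A = \emptyset$). Therefore $\ekcp$ is generated in degree $1$ as soon as the following combinatorial statement is established: for every $f \in \NNN^P$ with $n \define \max\{f^+(C) \mid C \text{ a chain in } P\}$, there exist antichains $A_1, \ldots, A_n$ of $P$ with $f = \sum_{i=1}^n \chi_{A_i}$. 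Indeed, given $\xi \in \SSSSS^{(0)}$ of degree $N$ and putting $f \define \xi|_P$, the defining inequalities of $\SSSSS^{(0)}$ give $n \leq N$; after setting $A_{n+1} = \cdots = A_N = \emptyset$ and defining $\eta_i \in \ZZZ^{P^-}$ by $\eta_i|_P = \chi_{A_i}$ and $\eta_i(-\infty) = 1$, one obtains a factorization $T^\xi = T^{\eta_1} \cdots T^{\eta_N}$ in which every factor is a degree-$1$ generator of $\ekcp$.

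The combinatorial claim is proved by induction on $n$, the case $n = 0$ being vacuous. For $n \geq 1$, define $g \colon P \to \NNN$ by $g(x) \define \max\{f^+(C') \mid C' \text{ a chain in } P \text{ with } \max C' = x\}$ and set $A \define \{x \in P \mid g(x) = n \text{ and } f(x) \geq 1\}$. Two verifications remain. First, $A$ is an antichain: if $x, y \in A$ with $x < y$, then appending $y$ to a chain witnessing $g(x) = n$ gives a chain ending at $y$ of $f$-sum at least $n + f(y) \geq n + 1$, contradicting $g(y) \leq n$. Second, every chain $C$ with $f^+(C) = n$ meets $A$: letting $z$ be the largest element of $C$ with $f(z) \geq 1$ (which exists because $n \geq 1$), the truncation $\{w \in C \mid w \leq z\}$ has the same $f$-sum $n$ and ends at $z$, forcing $g(z) = n$ and thus $z \in A \cap C$. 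Consequently $f - \chi_A \in \NNN^P$ and its maximum chain-sum is at most $n - 1$, so the inductive hypothesis supplies antichains $A_2, \ldots, A_n$ with $f - \chi_A = \sum_{i=2}^n \chi_{A_i}$, finishing the decomposition with $A_1 \define A$.

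The only substantive point is the second verification above, which is what guarantees that subtracting $\chi_A$ drops the maximum chain-sum by at least one. It rests on the monotonicity $g(y) \geq g(x) + f(y)$ for $x < y$, together with the observation that the tight tail of any maximum-sum chain can be truncated so as to land inside $A$. Once this is secured the remaining steps are bookkeeping, and the promised factorization of $T^\xi$ into degree-$1$ monomials of $\ekcp$ is immediate.
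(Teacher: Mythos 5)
Your proof is correct, and it takes a genuinely different route from the paper's. Both arguments ultimately peel characteristic vectors of antichains off $\xi$, but the antichain chosen and the verification differ. The paper splits off a single degree-one factor by taking $A=\max\{x\in P\mid\xi(x)>0\}\cup\{-\infty\}$, the maximal elements of the support; the delicate point there is to show that a maximal chain $C$ disjoint from $A$ but meeting the support satisfies $\xip(C)\leq\xi(-\infty)-1$, which the paper handles by rerouting $C$ through an element of $A$ lying above the top of the support of $\xi$ on $C$ and comparing $\xip(C)$ with $\xip(C')$ for the rerouted chain. You instead prove the full decomposition $f=\sum_{i=1}^{n}\chi_{A_i}$ (a weighted Mirsky-type statement) by induction on $n=\max\{f^+(C)\mid C \mbox{ a chain}\}$, choosing $A$ to consist of the elements with $f(x)\geq 1$ at which the maximum chain-weight from below is attained; your key verification, that every chain of weight $n$ meets $A$, is obtained by truncating the chain at the top of its support and is more direct than the rerouting argument. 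The two antichains are genuinely different in general (for two incomparable elements of weights $2$ and $1$ the paper picks both, you pick only the heavier one), and your argument has the side benefit of exhibiting the integer decomposition property of $\msCCC(P)$ explicitly, at the modest cost of introducing the auxiliary function $g$. One cosmetic remark: the inductive step yields that the maximum chain-weight of $f-\chi_A$ is exactly $n-1$ (a weight-$n$ chain meets the antichain $A$ in exactly one point), so no padding is needed inside the induction, only in the final bookkeeping where $n\leq N$.
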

\begin{proof}
It is enough to show that if $\xi\in\SSSSS^{(0)}$ and $\xi(-\infty)\geq2$,
then there are $\xi_1$, $\xi_2\in\SSSSS^{(0)}$ such that $\xi_1(-\infty)=1$ and
$\xi_1+\xi_2=\xi$.

If $\xi(x)=0$ for any $x\in P$, it is enough to set $\xi_1(x)=\xi_2(x)=0$,
for any $x\in P$, $\xi_1(-\infty)=1$ and $\xi_2(-\infty)=\xi(-\infty)-1$.
Suppose that $\xi(x)>0$ for some $x\in P$.
Set $A\define\max\{x\in P\mid\xi(x)>0\}\cup\{-\infty\}$,
$\xi_1\define\chi_A$ and $\xi_2\define\xi-\xi_1$.
First it is clear from the definition that $\xi_1(-\infty)=1$ and
$\xi=\xi_1+\xi_2$.
Let $x$ be an arbitrary element of $P$.
If $x\in A$, then $\xi(x)>0$ and $\xi_1(x)=1$.
Thus, $\xi_2(x)\geq0$.
If $x\not\in A$. then $\xi_1(x)=0$ and $\xi(x)\geq0$.
Thus, $\xi_2(x)\geq0$.

Now let $C$ be an arbitrary maximal chain in $P$.
We have to show that $\xip_1(C)\leq\xi_1(-\infty)$ and
$\xip_2(C)\leq\xi_2(-\infty)$.
If $C\cap A\neq\emptyset$, then there is a unique element $x\in C\cap A$.
Therefore, $\xip_1(C)=1=\xi_1(-\infty)$ and
$\xip_2(C)=\xip(C)-1\leq\xi(-\infty)-1=\xi_2(-\infty)$.

Suppose that $C\cap A=\emptyset$.
If $\xi(c)=0$ for any $c\in C$, $\xip_1(C)=\xip(C)=0$.
Thus, $\xip_1(C)\leq 1=\xi_1(-\infty)$ and
$\xip_2(C)=\xip(C)=0\leq\xi_2(-\infty)$.
Assume that $\xi(c)>0$ for some $c\in C$.
Set $x\define\max\{c\in C\mid\xi(c)>0\}$.
Since $x\not\in A$, there exists $y\in A$ with $y>x$.
Take a saturated chain $x=z_0\covered\cdots\covered z_s=y$
and $y=z'_0\covered\cdots\covered z'_{s'}\covered+\infty$.
Then
$C'\define(C\cap(-\infty,x])\cup\{z_1,\ldots, z_s, z'_1, \ldots, z'_{s'}\}$
is a maximal chain in $P$.
Further, $\xip(C')>\xip(C)$ since $\xi(c)=0$ for any $c\in C$ with $c>x$ and
$C'\ni z_s=y\in A$.
Therefore, $\xip(C)\leq\xip(C')-1\leq\xi(-\infty)-1$.
Since $C\cap A=\emptyset$, $\xip_1(C)=\chi^+_A(C)=0\leq 1=\xi_1(-\infty)$
and
$\xip_2(C)=\xip(C)-\xip_1(C)=\xip(C)\leq\xi(-\infty)-1=\xi_2(-\infty)$.

Thus, we see that $\xi_1$, $\xi_2\in\SSSSS^{(0)}$.
\end{proof}

\section{Symbolic powers of the canonical ideal}

\mylabel{sec:symbolic power}

In this section, we consider the symbolic powers $\omega^{(n)}$ of the canonical ideal
$\omega$ of $\kcp$ and describe the Laurent monomial basis of $\omega^{(n)}$
as a vector space over $\KKK$.
Further, we show that the symbolic powers of $\omega$ are equal to the ordinary ones.

Here we note the following fact.
Let $R$ be a Noetherian normal domain and $I$ a fractional ideal.
$I$ is said to be divisorial if $R:_{Q(R)}(R:_{Q(R)}I)=I$, i.e.,
$I$ is reflexive as an $R$-module,
where $Q(R)$ is the fraction field of $R$.
It is known that the set of divisorial ideals form a group,
denoted $\Div(R)$, by the operation
$I\cdot J\define R:_{Q(R)}(R:_{Q(R)}IJ)$.
We denote the $n$-th power of $I$ in this group $I^{(n)}$, 
where $n\in\ZZZ$.
Note that if $I\subsetneq R$, then $I^{(n)}$ is identical with the 
$n$-th symbolic power of $I$.
Note also that the inverse element of $I$ in $\Div(R)$ is $R:_{Q(R)}I$.

Suppose further that $R$ is an affine semigroup ring
generated by Laurent monomials 
in the Laurent polynomial ring $\KKK[X_1^{\pm1}, \ldots, X_s^{\pm1}]$
over $\KKK$,
%with weight so that $R$ is a standard graded ring,
where $\KKK$ is a field and $X_1$, \ldots, $X_s$ are indeterminates.
%).
Let $I$ be a divisorial ideal generated by 
%homogeneous elements
%(resp.\ 
Laurent monomials
$m_1$, \ldots, $m_\ell$.
Then $R:_{Q(R)}I=\bigcap_{i=1}^\ell Rm_i^{-1}$.
Thus, $R:_{Q(R)}I$ is an $R$-submodule of 
%$S^{-1}R$ generated by homogeneous elements, 
%where $S=\{x\in R\mid x\neq0$, $x$ is a homogeneous element$\}$
%(resp.\
of $\KKK[X_1^{\pm1},\ldots, X_s^{\pm1}]$
generated by Laurent monomials.
Therefore, the set of divisorial ideals generated by 
%homogeneous elements in $S^{-1}R$
%(resp.\
Laurent monomials
 form a subgroup of $\Div(R)$.
It is known that the canonical module $\omega$ is reflexive and isomorphic to an ideal.
Therefore
$\omega\in\Div(R)$.
Thus, if the  canonical module $\omega$ of
$R$ is isomorphic to an ideal of $R$ generated by 
%homogeneous elements
%(resp.\
Laurent monomials, then
the inverse element $\omega^{(-1)}$ of $\omega$ in $\Div(R)$ 
is also an $R$-submodule
%of $S^{-1}R$
%(resp.\
$\KKK[X_1^{\pm1},\ldots, X_s^{\pm1}]$
generated by 
%homogeneous elements in $S^{-1}R$
%(resp.\
Laurent monomials.

Since an $R$-submodule of $\KKK[X_1^{\pm1},\ldots, X_s^{\pm1}]$ 
generated by Laurent monomials
has a unique system
of generators consisting of Laurent monomials,
we call an element of this system a generator of the $R$-submodule.

Taking into account of this fact,
%we state the following.
we first note the following fact.

\begin{lemma}
\mylabel{lem:an sn elem}
Let $n$ be an integer.
Set $\xi\in\ZZZ^{P^-}$ by
$$
\xi(z)\define
\begin{cases}
n,&\mbox{if $z\in P$;}\\
\qndist(-\infty,\infty),&\mbox{if $z=-\infty$,}
\end{cases}
$$
then $\xi\in\SSSSS^{(n)}$.
\end{lemma}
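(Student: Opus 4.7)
The plan is to verify the two defining conditions of $\SSSSS^{(n)}$ (Definition \ref{def:sn}) directly from the construction of $\xi$. The first condition, $\xi(x) \geq n$ for all $x \in P$, is immediate since $\xi \equiv n$ on $P$ by definition.

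For the chain condition, I would fix an arbitrary maximal chain $C = \{z_1 < z_2 < \cdots < z_t\}$ in $P$ and exhibit an associated saturated chain in $P^\pm$. Maximality of $C$ in $P$ forces $z_1 \in \min P$, $z_t \in \max P$, and $z_i \covered z_{i+1}$ in $P$ for each $i$; combined with the construction of $P^\pm$ (where $-\infty \covered z$ iff $z \in \min P$ and $z \covered +\infty$ iff $z \in \max P$), this yields the saturated chain
\[
-\infty \covered z_1 \covered z_2 \covered \cdots \covered z_t \covered +\infty
\]
in $P^\pm$ of length $t+1$. Since $\qndist(-\infty,+\infty)$ is defined as the \emph{maximum} of $ns$ over all lengths $s$ of saturated chains from $-\infty$ to $+\infty$, the existence of this particular chain gives $\qndist(-\infty,+\infty) \geq n(t+1)$. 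Combined with $\xip(C) = \sum_{i=1}^t \xi(z_i) = tn$, this produces
\[
\xi(-\infty) = \qndist(-\infty,+\infty) \geq n(t+1) = tn + n = \xip(C) + n,
\]
which is exactly the required bound, so $\xi \in \SSSSS^{(n)}$.

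There is no substantive obstacle; the lemma is a bookkeeping check showing that $\qndist(-\infty,+\infty)$ was calibrated precisely to dominate the canonical-ideal inequality along every maximal chain of $P$. The only minor point worth flagging is the sign of $n$: when $n<0$ the maximum in the definition of $\qndist$ is actually attained at the \emph{shortest} saturated chain rather than this one, but $n(t+1)$ nevertheless lies in the set whose maximum is being taken, so the inequality holds regardless.
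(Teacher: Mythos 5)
Your proof is correct and follows essentially the same route as the paper: both verify the pointwise bound trivially and then observe that for a maximal chain $C$ in $P$, the set $C\cup\{-\infty,+\infty\}$ is a saturated chain from $-\infty$ to $+\infty$, so $\qndist(-\infty,+\infty)\geq n(\#C+1)=\xip(C)+n$. Your closing remark about negative $n$ is a sound observation but not needed, since (as you note) the argument only uses that $n(\#C+1)$ lies in the set being maximized.
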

\begin{proof}
It is clear that $\xi(z)\geq n$ for any $z\in P$.
Let $C$ be an arbitrary maximal chain in $P$.
Then $C\cup\{-\infty,\infty\}$ is a saturated chain from $-\infty$ to $\infty$
of length 
$\#(C\cup\{-\infty,\infty\})-1$.
Thus, $n(\#(C\cup\{-\infty,\infty\})-1)\leq\qndist(-\infty,\infty)=\xi(-\infty)$.
Therefore, $\xip(C)+n=n(\#C)+n\leq\xi(-\infty)$.
\end{proof}
Next we state the following.

\begin{prop}
\mylabel{prop:in omega n}
Let $n$ be an integer and $\xi\in\ZZZ^{P^-}$.
Then $T^\xi\in\omega^{(n)}$ if and only if $\xi\in\SSSSS^{(n)}$.
In particular,
$$
\omega^{(n)}=\bigoplus_{\xi\in\SSSSS^{(n)}}\KKK T^\xi.
$$
\end{prop}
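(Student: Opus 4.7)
The plan is to apply the valuative description of divisorial monomial ideals in the normal affine semigroup ring $R := \ekcp$. From Section \ref{sec:pre}, $R = \bigoplus_{\xi \in \SSSSS^{(0)}} \KKK T^\xi$ is the semigroup algebra on the lattice points of the rational polyhedral cone
\[
\sigma = \{\xi \in \RRR^{P^-} : \xi(x) \geq 0 \text{ for all } x \in P,\ \xi(-\infty) \geq \xip(C) \text{ for every maximal chain } C\},
\]
and is normal by Hochster's criterion. First I would invoke Stanley's description \cite{sta3} of the facets of $\msCCC(P)$ to confirm that every one of the above inequalities is facet-defining for $\sigma$: the facets are precisely $F_v = \sigma \cap \{\xi(v) = 0\}$ for $v \in P$, with primitive defining form $\ell_{F_v}(\xi) = \xi(v)$, and $F_C = \sigma \cap \{\xi(-\infty) = \xip(C)\}$ for maximal chains $C$ of $P$, with $\ell_{F_C}(\xi) = \xi(-\infty) - \xip(C)$.

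Second, I would invoke the standard correspondence between facets of $\sigma$ and height-$1$ primes of the normal semigroup ring $R$: each facet $F$ yields a height-$1$ prime $P_F$, and its associated discrete valuation $v_F$ on the fraction field acts on Laurent monomials by $v_F(T^\xi) = \ell_F(\xi)$. The description of the canonical module recorded at the end of Section \ref{sec:pre}, $\omega = \bigoplus_{\xi \in \SSSSS^{(1)}} \KKK T^\xi$, expresses $\omega$ as the intersection of all the $P_F$'s, so $v_F(\omega) = 1$ for every facet $F$; consequently the $n$-th divisorial power $\omega^{(n)}$ is the unique reflexive monomial ideal with $v_F(\omega^{(n)}) = n$ at every $F$. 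Since $\omega^{(n)}$ is an $R$-submodule of the Laurent polynomial ring generated by Laurent monomials (per the discussion preceding Lemma \ref{lem:an sn elem}), the valuative criterion gives $T^\xi \in \omega^{(n)}$ if and only if $\ell_F(\xi) \geq n$ for every facet $F$, which, once the $\ell_F$'s are substituted in, is exactly $\xi \in \SSSSS^{(n)}$. This yields both implications of the proposition together with the direct-sum decomposition.

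Lemma \ref{lem:an sn elem} complements the argument: it exhibits an explicit element of $\SSSSS^{(n)}$ on which every $\ell_F$ takes the value $n$, confirming that $[\omega^{(n)}] = n[\omega]$ in the divisor class group and not a deeper power. The main technical obstacle is the facet identification of $\sigma$, which is where Stanley's theorem on the facets of $\msCCC(P)$ is essential; without it, a defining inequality of $\sigma$ that happened to be redundant could fail to propagate to level $n$, breaking the implication $\xi \in \SSSSS^{(n)} \Rightarrow T^\xi \in \omega^{(n)}$.
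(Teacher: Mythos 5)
Your proof is correct, but it takes a genuinely different route from the paper's. The paper argues elementarily from the definition of divisorial powers as iterated colon ideals: it first characterizes $\omega^{(-m)}=R:_{Q(R)}\omega^m$ by testing $T^\xi$ against two explicitly constructed families of monomials in $\omega$ (the constant map $\eta\equiv 1$ with $\eta(-\infty)=\qonedist(-\infty,\infty)$, and a map $\eta'$ taking a huge value $N$ exactly on one maximal chain $C$), and then gets $\omega^{(m)}=R:_{Q(R)}\omega^{(-m)}$ by testing against the monomials of $\omega^{(-m)}$ just described. You instead invoke the divisor theory of normal affine semigroup rings: monomial divisorial fractional ideals correspond to integer vectors indexed by the facets of the cone, $\omega$ corresponds to the all-ones vector, hence $\omega^{(n)}$ to $n\cdot\mathbf{1}$, and the facet inequalities are exactly those defining $\SSSSS^{(n)}$. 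This is shorter and more conceptual, and you correctly isolate the one genuine pressure point: for $|n|\geq 2$ the argument needs every listed inequality to be facet-defining (with primitive integral form), since a redundant inequality would not "propagate" linearly to level $n$. That facet claim is true --- $\xi(v)\geq 0$ is a facet of any full-dimensional stable set polytope cone, and maximal chains are maximal cliques of the (perfect) comparability graph, whose inequalities are always facets --- but be aware that \cite{sta3} gives the vertices of $\msCCC(P)$ rather than an explicit facet list, so you should either verify the facet property directly (easy: exhibit $\#P^-$ affinely independent lattice points on each face) or cite the perfect-graph literature. It is worth noting that the paper's test monomial $\eta'$ with the huge value $N$ is precisely the hands-on counterpart of your facet argument: it is a point deep inside every facet except $F_C$, certifying that the valuation attached to the chain $C$ is essential. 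Each approach buys something: the paper's is self-contained and avoids any appeal to the facet structure; yours makes transparent why the answer is linear in $n$ and immediately gives the class-group statement $[\omega^{(n)}]=n[\omega]$.
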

\begin{proof}
The cases where $n=0$ and $1$ are noted in the previous section.
Let $m$ be a positive integer.

We first show that $T^\xi\in\omega^{(-m)}$ if and only if $\xi\in\SSSSS^{(-m)}$.
In order to prove the ``only if'' part, first let $\eta$ be a map from $P^-$ to
$\ZZZ$ such that $\eta(z)=1$ for any $z\in P$ and $\eta(-\infty)=\qonedist(-\infty,\infty)$.
Then $T^\eta\in\omega$ by Lemma \ref{lem:an sn elem}.
Thus, $T^{m\eta}\in\omega^m$.
Since $T^\xi\in\omega^{(-m)}$, we see that $T^{\xi+m\eta}\in\kcp$.
Therefore, $\xi(z)+m\eta(z)\geq0$ for any $z\in P$ and we see that
$\xi(z)\geq-m$ since $\eta(z)=1$.
Next, let $C$ be an arbitrary maximal chain in $P$,
$N$ a huge integer ($N>\qonedist(-\infty,\infty)$) and
let $\eta'$ be a map from $P^-$ to $\ZZZ$ such that $\eta'(z)=N$ for $z\in C$,
$\eta'(z)=1$ for $z\in P\setminus C$ and $\eta'(-\infty)=N(\#C)+1$.
Then $T^{\eta'}\in\omega$ since $N$ is a huge integer and therefore
$T^{m\eta'}\in\omega^m$.
Thus, $T^{\xi+m\eta'}\in\kcp$ since $T^\xi\in\omega^{(-m)}$.
Therefore,
$\xi(-\infty)+m\eta'(-\infty)\geq\xip(C)+m((\eta')^+(C))$.
Since $\eta'(-\infty)=N(\#C)+1$ and $(\eta')^+(C)=N(\#C)$,
we see that
$\xi(-\infty)-\xip(C)\geq m((\eta')^+(C))-m\eta'(-\infty)=-m$,
i.e.,
$\xi(-\infty)\geq\xip(C)-m$.

Next, we prove the ``if'' part.
Let $T^{\eta_1}$, \ldots, $T^{\eta_m}$ be arbitrary Laurent monomials in $\omega$,
where $\eta_i\in\ZZZ^{P^-}$ for $1\leq i\leq m$.
Since $\eta_i(z)\geq 1$ and $\xi(z)\geq -m$
for any $z\in P$ and $i$, we see that
$(\xi+\eta_1+\cdots+\eta_m)(z)\geq 0$ for any $z\in P$.
Next let $C$ be an arbitrary maximal chain in $P$.
Since $\eta_i^+(C)+1\leq\eta_i(-\infty)$ for any $i$,
we see that
$(\eta_1+\cdots+\eta_m)^+(C)+m\leq(\eta_1+\cdots+\eta_m)(-\infty)$.
Since $\xi(-\infty)\geq\xip(C)-m$ by assumption, we see that
$(\xi+\eta_1+\cdots+\eta_m)(-\infty)
\geq
(\xi+\eta_1+\cdots+\eta_m)^+(C)$.
Therefore, we see that $T^{\xi+\eta_1+\cdots+\eta_m}\in\kcp$.
Since $T^{\eta_1}$, \ldots, $T^{\eta_m}$ are arbitrary Laurent monomials
in $\omega$, we see that $T^\xi\in\omega^{(-m)}$.

We next show that for $\xi\in\ZZZ^{P^-}$,
$T^\xi\in\omega^{(m)}$ if and only if $\xi\in\SSSSS^{(m)}$.
We first show the ``only if'' part.
Set $\zeta\in\ZZZ^{P^-}$ by $\zeta(z)=-m$ for any $z\in P$ and
$\zeta(-\infty)=\qmmdist(-\infty,\infty)$.
%%%%%%%%%%%%%%%%%%%%%%%%%%%%%
\iffalse
It is clear that $\xi(z)\geq -m$ for any $z\in P$.
Further, for any maximal chain $C$ in $P$,
$\zeta^+(C)+(-m)=-m(\#C)-m
=-m(\#(C\cup\{-\infty,\infty\})-1)\leq q^{(-m)}\dist(-\infty,\infty)=\zeta(-\infty)$,
since $C\cup\{-\infty,\infty\}$ is a saturated chain from $-\infty$ to $\infty$.
Therefore, 
$\zeta\in\SSSSS^{(-m)}$ and
\fi
%%%%%%%%%%%%%%%%%%%%%%%%%%%%%%%
%$\xip(C)=n(\#C)\leq\qndist(-\infty,\infty)-n=\xi(-\infty)-n$.
%Then 
Then $\zeta\in\SSSSS^{(-m)}$ by Lemma \ref{lem:an sn elem} and therefore
we see that $T^\zeta\in\omega^{(-m)}$
by the fact proved above.
Since $T^{\xi+\zeta}\in\kcp$ by assumption, we see that
$\xi(z)+\zeta(z)\geq0$ for any $z\in P$.
Thus, $\xi(z)\geq-\zeta(z)=m$ for any $z\in P$.
Next let $C$ be an arbitrary maximal chain in $P$.
Set $\zeta'\in\ZZZ^{P^-}$ by
$$
\zeta'(z)=
\begin{cases}
0,&\mbox{if $z\in C$;}\\
-m,&\mbox{otherwise.}
\end{cases}
$$
Then 
$\zeta'\in\SSSSS^{(-m)}$ and therefore
by the criterion of a Laurent monomial to be an element of $\omega^{(-m)}$ proved above, 
we see that 
$T^{\zeta'}\in\omega^{(-m)}$.
Thus, $T^{\xi+\zeta'}\in\kcp$ by assumption
and  we see that
$\xip(C)+(\zeta')^+(C)\leq\xi(-\infty)+\zeta'(-\infty)$.
Therefore,
$\xi(-\infty)\geq\xip(C)+(\zeta')^+(C)-\zeta'(-\infty)=\xip(C)+m$.

Next we show the ``if'' part.
Let $\xi$ be a map from $P^-$ to $\ZZZ$ such that $\xi(z)\geq m$ for any $z\in P$
and $\xi(-\infty)\geq\xip(C)+m$ for any maximal chain $C$ in $P$.
Let $T^\zeta$, $\zeta\in\ZZZ^{P^-}$ be an arbitrary Laurent monomial in
$\omega^{(-m)}$.
Then by the first part of this proof, we see that $\zeta(z)\geq -m$ for any
$z\in P$ and $\zeta(-\infty)\geq\zeta^+(C)-m$ for any maximal chain $C$ in $P$.
Therefore, $(\xi+\zeta)(z)=\xi(z)+\zeta(z)\geq0$ and
$(\xi+\zeta)(-\infty)=\xi(-\infty)+\zeta(-\infty)\geq
\xip(C)+m+\zeta^+(C)-m=(\xi+\zeta)^+(C)$
for any maximal chain $C$ in $P$.
Thus, we see that $T^{\xi+\zeta}\in\kcp$.
Since $T^\zeta$ is an arbitrary Laurent monomial in $\omega^{(-m)}$,
we see that $T^\xi\in\omega^{(m)}$.
\end{proof}
As a corollary, we see the following fact.

\begin{cor}
\mylabel{cor:gen deg min}
Let $n$ be an integer and let $\xi\colon P^-\to\ZZZ$ be
the map defined in Lemma \ref{lem:an sn elem}.
%%%%%%%%%%%%%%%%%%%%%%
\iffalse
a map such that
$$
\xi(z)=
\begin{cases}
n,&\mbox{if $z\in P$;}\\
\qndist(-\infty,\infty),&\mbox{if $z=-\infty$.}
\end{cases}
$$
\fi
%%%%%%%%%%%%%%%%%%%%%%%%%%%
Then $T^\xi$ is an element of $\omega^{(n)}$ with $\deg T^\xi=\qndist(-\infty,\infty)$
and for any map $\zeta\colon P^-\to\ZZZ$
with $T^\zeta\in\omega^{(n)}$,
$\deg T^\zeta\geq\deg T^\xi$.
\end{cor}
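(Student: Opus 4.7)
The plan is to combine Lemma \ref{lem:an sn elem} with Proposition \ref{prop:in omega n}, which together already do almost all of the work. For the first assertion, Lemma \ref{lem:an sn elem} gives $\xi \in \SSSSS^{(n)}$, and Proposition \ref{prop:in omega n} then yields $T^\xi \in \omega^{(n)}$. Since $\deg T_x = 0$ for $x \in P$ and $\deg T_{-\infty} = 1$, the degree is simply $\xi(-\infty) = \qndist(-\infty,\infty)$.

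For the lower bound, I would take an arbitrary $\zeta\colon P^- \to \ZZZ$ with $T^\zeta \in \omega^{(n)}$ and apply Proposition \ref{prop:in omega n} to obtain $\zeta \in \SSSSS^{(n)}$. This means $\zeta(z) \geq n$ for all $z \in P$ and $\zeta(-\infty) \geq \zeta^+(C) + n$ for every maximal chain $C$ of $P$. Summing the pointwise bound over any maximal chain $C$ gives $\zeta^+(C) \geq n\cdot\#C$, hence
$$\zeta(-\infty) \geq n\#C + n = n(\#C+1).$$
Now every maximal chain $C$ in $P$ yields a saturated chain $-\infty \covered z_1 \covered \cdots \covered z_{\#C} \covered +\infty$ in $P^\pm$ of length $\#C+1$, and conversely every saturated chain from $-\infty$ to $\infty$ arises in this way. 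Taking the maximum over all maximal chains $C$ therefore gives $\zeta(-\infty) \geq \qndist(-\infty,\infty) = \deg T^\xi$, which is exactly the bound claimed.

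There is no real obstacle here; the statement is essentially a direct combination of the preceding lemma and proposition together with an unpacking of the definition of $\qndist$. The one small thing to be careful about is that the inequality $\zeta^+(C) \geq n\#C$ must be valid regardless of the sign of $n$, but this is automatic because it is just a sum of the individual inequalities $\zeta(z) \geq n$. Thus the proof will be short and consist mainly of quoting the two earlier results and identifying saturated chains in $P^\pm$ with maximal chains in $P$.
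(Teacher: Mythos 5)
Your proposal is correct and follows essentially the same route as the paper: both parts reduce to Lemma \ref{lem:an sn elem} plus Proposition \ref{prop:in omega n}, and the lower bound comes from summing $\zeta(z)\geq n$ over a maximal chain and using $\zeta(-\infty)\geq\zeta^+(C)+n$. The only cosmetic difference is that the paper evaluates the inequality directly on a chain realizing $\qndist(-\infty,\infty)$, whereas you prove it for all maximal chains and then take the maximum; these are the same argument.
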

\begin{proof}
By Lemma \ref{lem:an sn elem},
%By the same way as in the proof of Proposition \ref{prop:in omega n},
%
%It is clear that $\xi(z)\geq n$ for any $z\in P$.
%Further, for any maximal chain $C$ in $P$,
%$n(\#C)+n=n(\#(C\cup\{-\infty,\infty\})-1)\leq\qndist(-\infty,\infty)$.
%Therefore, $\xip(C)=n(\#C)\leq\qndist(-\infty,\infty)-n=\xi(-\infty)-n$.
we see that $\xi\in\SSSSS^{(n)}$.
Thus, 
$T^\xi\in\omega^{(n)}$ by Proposition \ref{prop:in omega n}
and $\deg T^\xi=\xi(-\infty)=\qndist(-\infty,+\infty)$.
Also, by Proposition \ref{prop:in omega n}, we see that $\zeta\in\SSSSS^{(n)}$.
Take a maximal chain $C$ in $P$ with
$\qndist(-\infty,\infty)=n(\#(C\cup\{-\infty,\infty\})-1)$.
Then $\qndist(-\infty,\infty)=n(\#C)+n$.
Since $\zeta(c)\geq n$ for any $c\in C$, we see that
$\zeta^+(C)\geq n(\#C)$.
Thus, $\deg T^\zeta=\zeta(-\infty)\geq\zeta^+(C)+n=\qndist(-\infty,\infty)=\deg T^\xi$.
\end{proof}

Next, we show that the symbolic powers of $\omega$ are equal to ordinary ones.
For $\alpha\in\RRR$, we denote by $\lfloor\alpha\rfloor$ the maximum integer 
less than or equals to
$\alpha$ and by $\lceil\alpha\rceil$ the minimum integer 
larger than or equals to $\alpha$.
Let $\epsilon$ be $1$ or $-1$,
$n$ an integer with $n\geq 2$ and $T^\xi$ an arbitrary Laurent monomial in 
$\omega^{(n\epsilon )}$, where $\xi\in\ZZZ^{P^-}$.
In this setting, we define two maps $\xi_1$ and $\xi_2\in\ZZZ^{P^-}$ as follows.
Set $\xi_1$ by
$$
\xi_1(w)=
\begin{cases}
\lfloor\frac{1}{n}\xi(w)\rfloor,&\vtop{\hsize=.4\textwidth\relax\noindent
if $w$ is a maximal element of $P$ or $w=-\infty$;}\\
\lfloor\frac{1}{n}\xipup(w)\rfloor-\lfloor\frac{1}{n}\xippup(w)\rfloor,&\mbox{otherwise}
\end{cases}
$$
and $\xi_2=\xi-\xi_1$.
We first show the following.

\begin{lemma}
\mylabel{lem:xi1 plus}
In the above notation,
$\xipup_1(z)=\lfloor\frac{1}{n}\xipup(z)\rfloor$
for any $z\in P$
and 
$\max\{\xip_1(C)\mid C$ is a maximal chain in $P\}=
\lfloor\frac{1}{n}\max\{\xip(C)\mid C$ is a maximal chain in $P\}\rfloor$.
\end{lemma}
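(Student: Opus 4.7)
The plan is to prove the first identity $\xipup_1(z) = \lfloor \frac{1}{n}\xipup(z)\rfloor$ by induction from the top of $P$ downward, and then to derive the second identity by reducing the maximum over maximal chains of $P$ to a maximum of $\xipup(w)$ over $w\in\min P$.

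For the induction, I order the elements of $P$ by decreasing value of the maximum length of a saturated chain from the element to $+\infty$ in $P^+$, so that maximal elements of $P$ come first. In the base case $z\in\max P$, the only saturated chain from $z$ to $+\infty$ in $P^+$ is $z\covered+\infty$, hence $\xipup(z)=\xi(z)$ and $\xipup_1(z)=\xi_1(z)=\lfloor\frac{1}{n}\xi(z)\rfloor$ by the first clause in the definition of $\xi_1$. In the inductive step for $z\in P\setminus\max P$, Lemma \ref{lem:path} \ref{item:prime eq max} applied to both $\xi$ and $\xi_1$ gives $\xippup(z)=\max\{\xipup(z')\mid z'\covers z\}$ and $\xippup_1(z)=\max\{\xipup_1(z')\mid z'\covers z\}$. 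Substituting $\xipup_1(z')=\lfloor\frac{1}{n}\xipup(z')\rfloor$ from the inductive hypothesis and commuting the max with the floor (which is legitimate since $\lfloor\cdot\rfloor$ is monotone) produces $\xippup_1(z)=\lfloor\frac{1}{n}\xippup(z)\rfloor$. Adding this to $\xi_1(z)=\lfloor\frac{1}{n}\xipup(z)\rfloor-\lfloor\frac{1}{n}\xippup(z)\rfloor$ (the second clause in the definition of $\xi_1$) and using $\xipup_1(z)=\xi_1(z)+\xippup_1(z)$ yields the first identity at $z$.

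For the second identity, I note that a maximal chain $C = \{w_0\covered w_1\covered\cdots\covered w_t\}$ in $P$ has $w_0\in\min P$ and $w_t\in\max P$, so the extension $w_0\covered w_1\covered\cdots\covered w_t\covered+\infty$ in $P^+$ is one of the chains contributing to $\xipup(w_0)$, with associated sum equal to $\xip(C)$; conversely, any chain realizing $\xipup(w)$ for $w\in\min P$ truncates to a maximal chain in $P$. Hence $\max\{\xip(C)\mid C$ is a maximal chain in $P\}=\max\{\xipup(w)\mid w\in\min P\}$, and the same identity holds for $\xi_1$. Applying the first identity at each $w\in\min P$ and using monotonicity of the floor then completes the argument. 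The only real subtlety is recognizing that the non-maximal clause in the definition of $\xi_1$ is engineered precisely so that $\xipup_1$ telescopes down to $\lfloor\frac{1}{n}\xipup(\cdot)\rfloor$; once this telescoping is observed, all remaining steps are routine.
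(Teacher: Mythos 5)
Your proof is correct and follows essentially the same route as the paper's: induction downward from the maximal elements (the paper phrases this as Noetherian induction), using Lemma \ref{lem:path} \ref{item:prime eq max} together with the fact that the floor commutes with a finite max to get $\xippup_1(z)=\lfloor\frac{1}{n}\xippup(z)\rfloor$, and then reducing the maximum over maximal chains to a maximum of $\xipup$ over minimal elements for the second identity. No gaps.
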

\begin{proof}
First we show the former assertion by Noetherian induction on $z$.
The case where $z$ is a maximal element of $P$ is trivial.
Assume that $z$ is not maximal.
Then by Lemma \ref{lem:path} \ref{item:prime eq max}, we see that
$\xippup(z)=\max\{\xipup(z')\mid z'\covers z\}$.
Since
$\lfloor\frac{1}{n}\max\{\xipup(z')\mid z'\covers z\}\rfloor=
\max\{\lfloor\frac{1}{n}\xipup(z')\rfloor\mid z'\covers z\}$,
we see by the induction hypothesis and Lemma \ref{lem:path} \ref{item:prime eq max}
that
$\lfloor\frac{1}{n}\xippup(z)\rfloor=\max\{\xipup_1(z')\mid z'\covers z\}
=\xippup_1(z)$.
Therefore, by the definition of $\xi_1$, we see that
$\xipup_1(z)=\xi_1(z)+\xippup_1(z)
=\lfloor\frac{1}{n}\xipup(z)\rfloor-\lfloor\frac{1}{n}\xippup(z)\rfloor
%+\xippup_1(z)
+\lfloor\frac{1}{n}\xippup(z)\rfloor
=\lfloor\frac{1}{n}\xipup(z)\rfloor$.

Next we show the latter assertion.
Since $\max\{\xip(C)\mid C$ is a maximal chain in $P\}
=\max\{\xipup(z)\mid z$ is a minimal element of $P\}$
and the corresponding equation for $\xi_1$, we see that
$\max\{\xip_1(C)\mid C$ is a maximal chain in $P\}
=\max\{\xipup_1(z)\mid z$ is a minimal element of $P\}
=\max\{\lfloor\frac{1}{n}\xipup(z)\rfloor\mid z$ is a minimal element of $P\}
%=\lfloor\frac{1}{n}\max\{\xipup(z)\mid z$ is a minimal element of $P\}\rfloor
=\lfloor\frac{1}{n}\max\{\xip(C)\mid C$ is a maximal chain in  $P\}\rfloor
$.
\end{proof}
Now we prove the following.

\begin{lemma}
\mylabel{lem:xi1 in s1}
In the above setting, $\xi_1\in\SSSSS^{(\epsilon)}$.
\end{lemma}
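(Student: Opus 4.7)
The plan is to unpack the two conditions defining $\SSSSS^{(\epsilon)}$ and verify each from $T^\xi\in\omega^{(n\epsilon)}$ using Proposition~\ref{prop:in omega n} together with Lemma~\ref{lem:xi1 plus} and elementary properties of the floor function. From Proposition~\ref{prop:in omega n} we immediately get $\xi\in\SSSSS^{(n\epsilon)}$, i.e.\ $\xi(w)\geq n\epsilon$ for every $w\in P$ and $\xi(-\infty)\geq\xip(C)+n\epsilon$ for every maximal chain $C$. Also $\xi_1$ is integer-valued by construction, so it suffices to check the two numerical inequalities.

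For the first condition (that $\xi_1(w)\geq\epsilon$ for each $w\in P$), I would handle two cases. If $w$ is maximal, then $\xi_1(w)=\lfloor\xi(w)/n\rfloor\geq\lfloor n\epsilon/n\rfloor=\epsilon$ directly. If $w$ is not maximal, the identity $\xipup(w)-\xippup(w)=\xi(w)\geq n\epsilon$ reduces matters to the elementary fact that if $a,b\in\ZZZ$ and $a-b\geq n\epsilon$ then $\lfloor a/n\rfloor-\lfloor b/n\rfloor\geq\epsilon$: for $\epsilon=1$, $a\geq b+n$ gives $\lfloor a/n\rfloor\geq\lfloor b/n\rfloor+1$, and for $\epsilon=-1$, $a\geq b-n$ gives $\lfloor a/n\rfloor\geq\lfloor b/n\rfloor-1$. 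Applying this with $a=\xipup(w)$ and $b=\xippup(w)$ yields $\xi_1(w)\geq\epsilon$.

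For the second condition (that $\xi_1(-\infty)\geq\xip_1(C)+\epsilon$ for every maximal chain $C$), I would first invoke the second assertion of Lemma~\ref{lem:xi1 plus}, which gives
\[
\max\{\xip_1(C)\mid C\text{ a maximal chain in }P\}=\left\lfloor\tfrac{1}{n}\max\{\xip(C)\mid C\text{ a maximal chain in }P\}\right\rfloor.
\]
Since $\xi(-\infty)\geq\max_C\xip(C)+n\epsilon$, the same floor inequality from the previous paragraph (applied with $a=\xi(-\infty)$ and $b=\max_C\xip(C)$) yields
\[
\xi_1(-\infty)=\left\lfloor\tfrac{1}{n}\xi(-\infty)\right\rfloor\geq\left\lfloor\tfrac{1}{n}\max_C\xip(C)\right\rfloor+\epsilon=\max_C\xip_1(C)+\epsilon,
\]
which in particular gives $\xi_1(-\infty)\geq\xip_1(C)+\epsilon$ for every maximal chain $C$.

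There is no real obstacle here; the only mildly delicate point is the floor inequality $\lfloor a/n\rfloor-\lfloor b/n\rfloor\geq\epsilon$ when $a-b\geq n\epsilon$, and even that is immediate from writing $a=b+(a-b)$ and dividing. The conceptual work is already carried out in Lemma~\ref{lem:xi1 plus}, which ensures that the chain-sum condition transfers correctly under the floor operation; the present lemma is essentially a bookkeeping consequence of that.
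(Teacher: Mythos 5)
Your proof is correct and follows essentially the same route as the paper: both verify the two defining inequalities of $\SSSSS^{(\epsilon)}$ using Proposition \ref{prop:in omega n} for the hypotheses on $\xi$, the identity $\xipup(w)=\xi(w)+\xippup(w)$ for non-maximal $w$, and Lemma \ref{lem:xi1 plus} for the chain condition at $-\infty$. The only cosmetic difference is that you isolate the floor inequality $\lfloor a/n\rfloor-\lfloor b/n\rfloor\geq\epsilon$ as an explicit step, while the paper applies the equivalent computation $\lfloor\frac{1}{n}(n\epsilon+x)\rfloor=\epsilon+\lfloor\frac{1}{n}x\rfloor$ inline.
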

\begin{proof}
It is enough to show that $\xi_1(z)\geq \epsilon$ for any $z\in P$ and
$\xi_1(-\infty)\geq\max\{\xip_1(C)\mid C$ is a maximal chain in $P\}+\epsilon$.

Let $z$ be an arbitrary element of $P$.
If $z$ is a maximal element of $P$, then
$\xi_1(z)=\lfloor\frac{1}{n}\xi(z)\rfloor\geq\epsilon$
since $\xi(z)\geq n\epsilon$
%$\xi(z)\in\SSSSS^{(n\epsilon )}$ 
by Proposition \ref{prop:in omega n}.
Assume that $z$ is not a maximal element of $P$.
Then $\xipup(z)=\xi(z)+\xippup(z)\geq n\epsilon +\xippup(z)$.
Therefore, 
$\xi_1(z)=\lfloor\frac{1}{n}\xipup(z)\rfloor-\lfloor\frac{1}{n}\xippup(z)\rfloor
\geq\lfloor\frac{1}{n}(n\epsilon +\xippup(z))\rfloor-\lfloor\frac{1}{n}\xippup(z)\rfloor
=\epsilon$.

Further, since $T^\xi\in\omega^{(n\epsilon )}$, we see by Proposition
\ref{prop:in omega n} that
$\xi(-\infty)\geq\max\{\xip(C)\mid C$ is a maximal chain in $P\}+n\epsilon $.
Thus,
$\xi_1(-\infty)=\lfloor\frac{1}{n}\xi(-\infty)\rfloor
\geq\lfloor\frac{1}{n}(\max\{\xip(C)\mid C$ is a maximal chain in $P\}+n\epsilon)\rfloor
=\max\{\xip_1(C)\mid C$ is a maximal chain in $P\}+\epsilon$
by Lemma \ref{lem:xi1 plus}.
\end{proof}

Next we show that $\xi_2\in\SSSSS^{((n-1)\epsilon)}$.
First we state the following.

\begin{lemma}
\mylabel{lem:xi2 plus}
In the above setting, 
$\xipup_2(z)=\lceil\frac{n-1}{n}\xipup(z)\rceil$ for any $z\in P$ and
$\max\{\xip_2(C)\mid C$ is a maximal chain in $P\}
=\lceil\frac{n-1}{n}\max\{\xip(C)\mid C$ is a maximal chain in $P\}\rceil$.
\end{lemma}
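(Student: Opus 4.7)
The plan is to mirror the proof of Lemma \ref{lem:xi1 plus}, replacing the role of the floor function by that of the ceiling. The engine is the elementary arithmetic identity
\[
\Bigl\lfloor\frac{m}{n}\Bigr\rfloor+\Bigl\lceil\frac{n-1}{n}m\Bigr\rceil=m
\qquad\text{for every integer }m,
\]
which is immediate by writing $m=qn+r$ with $0\leq r<n$. This identity is exactly what is needed to pass information about $\xi_1$ (which the previous lemma pins down) back to $\xi_2=\xi-\xi_1$.

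First I would prove the statement $\xipup_2(z)=\lceil\frac{n-1}{n}\xipup(z)\rceil$ by Noetherian induction on $z$, exactly as in Lemma \ref{lem:xi1 plus}. For the base case, when $z$ is maximal in $P$ we have $\xipup(z)=\xi(z)$ and $\xi_2(z)=\xi(z)-\lfloor\frac{1}{n}\xi(z)\rfloor$, which equals $\lceil\frac{n-1}{n}\xi(z)\rceil$ by the identity above, so $\xipup_2(z)=\xi_2(z)$ gives the claim. For the inductive step, when $z$ is non-maximal, Lemma \ref{lem:path} \ref{item:prime eq max} combined with the induction hypothesis and the fact that $\lceil\frac{n-1}{n}\,\cdot\,\rceil$ is monotone in the same way that $\max$ distributes over it, yields
\[
\xippup_2(z)=\max\{\xipup_2(z')\mid z'\covers z\}=\Bigl\lceil\tfrac{n-1}{n}\xippup(z)\Bigr\rceil.
\]
Now I would add $\xi_2(z)=\xi(z)-\xi_1(z)=\xi(z)-\lfloor\frac{1}{n}\xipup(z)\rfloor+\lfloor\frac{1}{n}\xippup(z)\rfloor$ and use the identity twice (once on $m=\xippup(z)$ to combine the $\lfloor\cdot\rfloor$ and $\lceil\cdot\rceil$ terms into $\xippup(z)$, then once on $m=\xipup(z)$ to recognise $\xipup(z)-\lfloor\frac{1}{n}\xipup(z)\rfloor=\lceil\frac{n-1}{n}\xipup(z)\rceil$). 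Using $\xipup(z)=\xi(z)+\xippup(z)$ makes these cancellations line up cleanly.

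For the second assertion, I would observe, as in Lemma \ref{lem:xi1 plus}, that
\[
\max\{\xip_2(C)\mid C\text{ a maximal chain in }P\}=\max\{\xipup_2(z)\mid z\in\min P\},
\]
apply the first assertion pointwise, and then pull the ceiling out of the outer $\max$. The only subtle point — which I view as the main obstacle to write up carefully — is keeping track of where the monotonicity interacts with $\lceil\cdot\rceil$: one must verify that $\max_i\lceil\alpha_i\rceil=\lceil\max_i\alpha_i\rceil$ when the $\alpha_i$ are of the form $\frac{n-1}{n}m_i$ for integer $m_i$, which holds because ceiling is non-decreasing and $\frac{n-1}{n}$ is positive. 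Everything else is bookkeeping against the formulas of Lemma \ref{lem:xi1 plus}.
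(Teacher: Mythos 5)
Your proposal is correct and follows essentially the same route as the paper: Noetherian induction on $z$, the identity $\lfloor m/n\rfloor+\lceil\frac{n-1}{n}m\rceil=m$ for integers $m$, the compatibility of $\lceil\frac{n-1}{n}\,\cdot\,\rceil$ with $\max$ via Lemma \ref{lem:path} \ref{item:prime eq max}, and reduction of the chain maximum to $\max\{\xipup_2(z)\mid z\in\min P\}$. The cancellation you describe is exactly the paper's computation (the paper phrases $\xippup(z)$ as $\xipup(z_1)$ for a covering element $z_1$ realizing the maximum, but this is the same step).
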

\begin{proof}
We first prove the former assertion by Noetherian induction on $z$.
If $z$ is a maximal element of $P$, then
$\xipup_2(z)=\xi_2(z)=\xi(z)-\xi_1(z)=\xi(z)-\lfloor\frac{1}{n}\xi(z)\rfloor
=\lceil\frac{n-1}{n}\xi(z)\rceil=\lceil\frac{n-1}{n}\xipup(z)\rceil$
by the definition of $\xi_1$ and $\xi_2$.
Suppose that $z$ is not a maximal element of $P$.
%Then $\xipup_2(z)=\xi_2(z)+\max_{z'\covers z}\xipup_2(z')$.
Take $z_1\in P$ with $z_1\covers z$ and $\xipup(z_1)=\max_{z'\covers z}\xipup(z')$.
Then since $\xipup_2(z)=\xi_2(z)+\max_{z'\covers z}\xipup_2(z')$,
we see by induction hypothesis that
$\xipup_2(z)=\xi_2(z)+\max_{z'\covers z}\lceil\frac{n-1}{n}\xipup(z')\rceil
=\xi_2(z)+\lceil\frac{n-1}{n}\xipup(z_1)\rceil$.
Since
$\xi_2(z)=\xi(z)-\xi_1(z)
=\xi(z)-(\lfloor\frac{1}{n}\xipup(z)\rfloor-\lfloor\frac{1}{n}\xippup(z)\rfloor)
%=\xi(z)-\lfloor\frac{1}{n}\xipup(z)\rfloor+\lfloor\frac{1}{n}\xippup(z)\rfloor
=\xi(z)-\lfloor\frac{1}{n}\xipup(z)\rfloor+\lfloor\frac{1}{n}\xipup(z_1)\rfloor$,
%by Lemma \ref{lem:xi1 plus}, 
we see that
$\xipup_2(z)
=\xi_2(z)+\lceil\frac{n-1}{n}\xipup(z_1)\rceil
=\xi(z)-\lfloor\frac{1}{n}\xipup(z)\rfloor+\lfloor\frac{1}{n}\xipup(z_1)\rfloor
+\lceil\frac{n-1}{n}\xipup(z_1)\rceil
=\xi(z)+\xipup(z_1)-\lfloor\frac{1}{n}\xipup(z)\rfloor
=\xipup(z)-\lfloor\frac{1}{n}\xipup(z)\rfloor
=\lceil\frac{n-1}{n}\xipup(z)\rceil$.

Now we prove the latter assertion.
By the fact
$\max\{\xip(C)\mid C$ is a maximal chain in $P\}
=\max\{\xipup(z)\mid z$ is a minimal element of $P\}$
and the corresponding fact for $\xi_2$,
we see that
$\max\{\xip_2(C)\mid C$ is a maximal chain in $P\}
=\max\{\xipup_2(z)\mid z$ is a minimal element of $P\}
=\max\{\lceil\frac{n-1}{n}\xipup(z)\rceil\mid z$ is a minimal element of $P\}
=\lceil\frac{n-1}{n}\max\{\xip(C)\mid C$ is a maximal chain in $P\}\rceil$.
\end{proof}
Now we show the following.

\begin{lemma}
\mylabel{lem:xi2 in s n-1}
$\xi_2\in\SSSSS^{((n-1)\epsilon)}$.
\end{lemma}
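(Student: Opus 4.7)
The plan is to mirror the structure of the proof of Lemma \ref{lem:xi1 in s1}, using Lemma \ref{lem:xi2 plus} in place of Lemma \ref{lem:xi1 plus}. I need to verify two things: (a) $\xi_2(z) \geq (n-1)\epsilon$ for every $z \in P$, and (b) $\xi_2(-\infty) \geq \max\{\xi_2^+(C) \mid C \text{ a maximal chain in } P\} + (n-1)\epsilon$. Throughout, I use the hypothesis $T^\xi \in \omega^{(n\epsilon)}$, which by Proposition \ref{prop:in omega n} gives $\xi(z) \geq n\epsilon$ for $z \in P$ and $\xi(-\infty) \geq \xi^+(C) + n\epsilon$ for every maximal chain $C$.

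For (a) when $z$ is maximal in $P$, I have $\xipup(z) = \xi(z)$, so Lemma \ref{lem:xi2 plus} gives $\xi_2(z) = \xipup_2(z) = \lceil \frac{n-1}{n}\xi(z) \rceil$. Since $\xi(z) \geq n\epsilon$, this yields $\xi_2(z) \geq (n-1)\epsilon$ (because $(n-1)\epsilon$ is an integer). When $z$ is not maximal, I would first observe that Lemma \ref{lem:path} \ref{item:prime eq max} together with Lemma \ref{lem:xi2 plus} and the monotonicity of $\lceil \cdot \rceil$ give $\xippup_2(z) = \lceil \frac{n-1}{n}\xippup(z) \rceil$. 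Then
$$
\xi_2(z) = \xipup_2(z) - \xippup_2(z) = \Bigl\lceil \tfrac{n-1}{n}(\xi(z)+\xippup(z)) \Bigr\rceil - \Bigl\lceil \tfrac{n-1}{n}\xippup(z) \Bigr\rceil.
$$
Using the elementary bound $\lceil \alpha + \beta \rceil - \lceil \beta \rceil \geq \lfloor \alpha \rfloor$ for real $\alpha, \beta$, this is bounded below by $\lfloor \frac{n-1}{n}\xi(z) \rfloor$, which is $\geq (n-1)\epsilon$ since $\xi(z) \geq n\epsilon$.

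For (b), I use $\xi_2(-\infty) = \xi(-\infty) - \lfloor \frac{1}{n}\xi(-\infty) \rfloor = \lceil \frac{n-1}{n}\xi(-\infty) \rceil$. Setting $M = \max\{\xi^+(C) \mid C\text{ maximal chain in }P\}$, Lemma \ref{lem:xi2 plus} gives $\max\{\xi_2^+(C)\} = \lceil \frac{n-1}{n}M \rceil$. The inequality $\xi(-\infty) \geq M + n\epsilon$ from Proposition \ref{prop:in omega n} yields $\frac{n-1}{n}\xi(-\infty) \geq \frac{n-1}{n}M + (n-1)\epsilon$, and since $(n-1)\epsilon$ is an integer, applying $\lceil \cdot \rceil$ gives exactly $\xi_2(-\infty) \geq \max\{\xi_2^+(C)\} + (n-1)\epsilon$.

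The only subtle step is the identity $\xippup_2(z) = \lceil \frac{n-1}{n}\xippup(z) \rceil$, which requires commuting $\lceil \cdot \rceil$ with $\max$; this is fine because $\lceil \cdot \rceil$ is non-decreasing. Once this is in place the rest is routine arithmetic with floors and ceilings, parallel to the argument already given for $\xi_1$. I expect no real obstacles beyond this bookkeeping, since the main work was already carried out in Lemma \ref{lem:xi2 plus}.
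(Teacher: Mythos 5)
Your proposal is correct and follows essentially the same route as the paper: the maximal-element and $-\infty$ cases are identical, and for non-maximal $z$ your identity $\xippup_2(z)=\lceil\tfrac{n-1}{n}\xippup(z)\rceil$ (via Lemma \ref{lem:path} \ref{item:prime eq max} and monotonicity of the ceiling) is exactly what the paper obtains by choosing $z_1\covers z$ attaining the maximum, and your bound $\lceil\alpha+\beta\rceil-\lceil\beta\rceil\geq\lfloor\alpha\rfloor$ with $\alpha=\tfrac{n-1}{n}\xi(z)\geq(n-1)\epsilon$ is the same arithmetic the paper performs with $\xipup(z)\geq n\epsilon+\xipup(z_1)$. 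No gaps.
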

\begin{proof}
Let $z\in P$.
If $z$ is a maximal element of $P$, then
by Lemma \ref{lem:xi2 plus}
$\xi_2(z)=\xipup_2(z)
=\lceil\frac{n-1}{n}\xipup(z)\rceil
=\lceil\frac{n-1}{n}\xi(z)\rceil
\geq(n-1)\epsilon$,
since $\xi(z)\geq n\epsilon$.
Next suppose that $z$ is not a maximal element of $P$.
Take $z_1\in P$ with $z_1\covers z$ and $\xipup(z_1)=\max_{z'\covers z}\xipup(z')
=\xippup(z)$.
Then by Lemma \ref{lem:xi2 plus}, we see that
$\xipup_2(z_1)=\max_{z'\covers z}\xipup_2(z')=\xippup_2(z)$.
Therefore, by Lemma \ref{lem:xi2 plus}, we see that
$\xi_2(z)=\xipup_2(z)-\xipup_2(z_1)
=\lceil\frac{n-1}{n}\xipup(z)\rceil-\lceil\frac{n-1}{n}\xipup(z_1)\rceil
\geq (n-1)\epsilon$,
since
$\xipup(z)=\xi(z)+\xipup(z_1)
\geq n\epsilon+\xipup(z_1)$.

Further, since 
$\xi(-\infty)\geq\max\{\xip(C)\mid C$ is a maximal chain in $P\}+n\epsilon$,
we see by Lemma \ref{lem:xi2 plus} that
$\xi_2(-\infty)=\xi(-\infty)-\xi_1(-\infty)
=\xi(-\infty)-\lfloor\frac{1}{n}\xi(-\infty)\rfloor
=\lceil\frac{n-1}{n}\xi(-\infty)\rceil
\geq\lceil\frac{n-1}{n}(\max\{\xip(C)\mid C$ is a maximal chain in $P\}+n\epsilon)\rceil
=\max\{\xip_2(C)\mid C$ is a maximal chain in $P\}+(n-1)\epsilon$.
Thus, we see that $\xi_2\in\SSSSS^{((n-1)\epsilon)}$.
\end{proof}

By Proposition \ref{prop:in omega n}, Lemmas \ref{lem:xi1 in s1}  and \ref{lem:xi2 in s n-1},
and induction on $n$, we see the following.

\begin{thm}
\mylabel{thm:symbolic power}
Let $n$ be a positive integer.
Then
$$
\omega^{(n)}=
\omega^{n}=\bigoplus_{\xi\in\SSSSS^{(n)}}\KKK T^\xi
$$
and
$$
\omega^{(-n)}=(\omega^{(-1)})^n=
\bigoplus_{\xi\in\SSSSS^{(-n)}}\KKK T^\xi.
$$
\end{thm}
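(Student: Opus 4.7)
The plan is to combine Proposition \ref{prop:in omega n} with the inductive splitting afforded by Lemmas \ref{lem:xi1 in s1} and \ref{lem:xi2 in s n-1}. Proposition \ref{prop:in omega n} already gives us the two displayed direct sum descriptions of $\omega^{(n)}$ and $\omega^{(-n)}$ as $\KKK$-vector spaces, so the content of the theorem is the equalities $\omega^{(n)}=\omega^n$ and $\omega^{(-n)}=(\omega^{(-1)})^n$.

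The easy inclusion is $\omega^n\subset\omega^{(n)}$ and $(\omega^{(-1)})^n\subset\omega^{(-n)}$: both follow formally from the definition of the symbolic power (equivalently, the group operation in $\Div(\kcp)$), since a product of Laurent monomials in $\omega$ (resp.\ $\omega^{(-1)}$) automatically lies in the divisorial closure of the $n$-th power. I would record this quickly at the start.

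For the reverse inclusion I would argue by induction on $n\geq1$, treating the two cases simultaneously with the parameter $\epsilon\in\{1,-1\}$. The base case $n=1$ is tautological. For the inductive step, given a Laurent monomial $T^\xi\in\omega^{(n\epsilon)}$, Proposition \ref{prop:in omega n} places $\xi\in\SSSSS^{(n\epsilon)}$, which is precisely the hypothesis needed to run the construction of $\xi_1$, $\xi_2$ from the definitions preceding Lemma \ref{lem:xi1 plus}. Lemma \ref{lem:xi1 in s1} gives $\xi_1\in\SSSSS^{(\epsilon)}$ and Lemma \ref{lem:xi2 in s n-1} gives $\xi_2\in\SSSSS^{((n-1)\epsilon)}$, so applying Proposition \ref{prop:in omega n} in the other direction, $T^{\xi_1}\in\omega^{(\epsilon)}$ and $T^{\xi_2}\in\omega^{((n-1)\epsilon)}$. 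By induction $T^{\xi_2}$ is a product of $n-1$ Laurent monomials from $\omega^{(\epsilon)}$, hence $T^\xi=T^{\xi_1}T^{\xi_2}$ is a product of $n$ such monomials, lying in $\omega^n$ (if $\epsilon=1$) or $(\omega^{(-1)})^n$ (if $\epsilon=-1$). Since $\omega^{(n\epsilon)}$ is spanned by such Laurent monomials, this completes the induction.

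The main obstacle is not really in this final packaging step, but in the verification that the specific splitting $\xi=\xi_1+\xi_2$ actually produces integer-valued functions with the required inequalities; this is exactly what Lemmas \ref{lem:xi1 plus}--\ref{lem:xi2 in s n-1} have already done. Given those lemmas, the proof of Theorem \ref{thm:symbolic power} reduces to the clean two-line induction described above.
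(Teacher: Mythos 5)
Your proposal is correct and matches the paper's argument exactly: the paper also deduces the theorem from Proposition \ref{prop:in omega n} together with the splitting $\xi=\xi_1+\xi_2$ provided by Lemmas \ref{lem:xi1 in s1} and \ref{lem:xi2 in s n-1}, by induction on $n$. Your write-up actually spells out the induction more explicitly than the paper, which compresses it into a single sentence.
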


\begin{remark}
\rm
Let $P=\{x,y\}$ be a poset with $x<y$ and let $\xi\colon P^-\to\ZZZ$
be a map with $\xi(x)=\xi(y)=3$ and $\xi(-\infty)=8$.
Then $\xi\in\SSSSS^{(2)}$.
If we set $\xi'(w)=\lfloor\frac{1}{2}\xi(w)\rfloor$ for any $w\in P^-$,
then $\xi'\in\SSSSS^{(1)}$ but $\xi-\xi'\not\in\SSSSS^{(1)}$.
Thus, simply setting $\xi_1(w)=\lfloor\frac{1}{n}\xi(w)\rfloor$ for any
$w\in P^-$ does not meet our demand.

Further, let $P_1=\{x_1, x_2, y\}$ be a poset with order relation $x_1<x_2$
and let $\xi\colon P^-\to \ZZZ$ be the map with
$\xi(x_1)=\xi(x_2)=2$, $\xi(y)=4$ and $\xi(-\infty)=6$.
Then $\xi\in\SSSSS^{(2)}$.
If we set $\xi'(z)=1$ for $z\in P$ and $\xi'(-\infty)=3$, then $\xi'\in \SSSSS^{(1)}$
but $\xi-\xi'\not\in\SSSSS^{(1)}$.
Thus, setting $\xi_1(z)=1$ for any $z\in P$ and $\xi_1(-\infty)=\qedist(-\infty,\infty)$
does not meet our demand also.
\end{remark}

For a poset $Q$, we define a graph $G(Q)$ whose vertex set is $Q$ and for
$a$, $b\in Q$, $\{a, b\}$ is an edge of $G(Q)$ if and only if $a\neq b$ and
$a$ and $b$ are comparable by the order of $Q$.
A graph $G$ such that there is a poset $Q$ with $G=G(Q)$ is called a 
comparability graph.
Recall that for a finite graph $G$ with vertex set $V$ and edge set $E$,
a stable set of $G$ is a subset of $V$ pairwise nonadjacent.
Further, the stable set polytope (vertex-packing polytope) of $G$ is the convex
polytope in $\RRR^V$ which is the convex hull of $\{\chi_A\mid A$ is a stable set$\}$.

Since in our setting, $\msCCC(P)$ is the convex hull of 
$\{\chi_A\mid A$ is an antichain of $P\}$ and a subset of $P$ is an antichain if and
only if it is a stable set of $G(P)$, we see that $\msCCC(P)$ is the stable set polytope
of $G(P)$.
Further, it is known the following fact.

\begin{fact}[{\cite[Theorem 1]{gh}}]
Let $G$ be a graph.
Then $G$ is a comparability graph of a poset if and only if each odd cycle of
$G$ has at least one triangular chord.
\end{fact}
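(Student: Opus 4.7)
This statement is the classical Gilmore--Hoffman / Ghouila-Houri characterization of comparability graphs, so my plan is to reproduce the standard proof in two directions, with most of the work on the ``if'' direction.

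For the (easier) ``only if'' direction, suppose $G = G(Q)$ for some poset $Q$. Given any odd cycle $v_1, v_2, \ldots, v_{2k+1}, v_1$ in $G$, orient each edge $\{v_i, v_{i+1}\}$ by the relation in $Q$ (so $v_i \to v_{i+1}$ when $v_i < v_{i+1}$, and $v_i \leftarrow v_{i+1}$ otherwise). Walking around the cycle produces an odd number of edges, so the orientations cannot alternate strictly; hence some two consecutive edges point in the same direction along the cycle, say $v_i < v_{i+1} < v_{i+2}$. Transitivity in $Q$ forces $v_i < v_{i+2}$, which means $\{v_i, v_{i+2}\}$ is an edge of $G$, giving the desired triangular chord.

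For the ``if'' direction, I would follow Gallai's argument using the so-called $\Gamma$-relation on ordered pairs of adjacent vertices: declare $(a,b) \Gamma (a,c)$ when $\{b,c\} \notin E(G)$, and $(a,b) \Gamma (c,b)$ when $\{a,c\} \notin E(G)$; extend by symmetry and transitivity to the equivalence relation $\Gamma^*$. The core combinatorial lemma is: if some $\Gamma^*$-class contains both $(a,b)$ and $(b,a)$, then $G$ contains an odd cycle without any triangular chord. Granting this lemma, the hypothesis implies that no $\Gamma^*$-class is closed under reversal; partitioning each class with its ``opposite'' class and orienting each edge consistently within a class yields an orientation of $G$. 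Finally one checks that this orientation is transitive by showing that any directed path $a \to b \to c$ with $\{a,c\}$ non-adjacent would force $(a,b) \Gamma (a,c)$ and contradict the partition, and concluding that the associated strict order makes $G$ the comparability graph of a poset.

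The real difficulty is the core lemma that a $\Gamma^*$-class fixed under reversal produces a chordless odd cycle. The argument chases a chain of $\Gamma$-steps witnessing $(a,b) \Gamma^* (b,a)$ and constructs from it a closed walk of odd length in $G$ whose non-adjacencies (built into the definition of $\Gamma$) obstruct any triangular chord. This combinatorial bookkeeping is the hardest part and would occupy the bulk of a detailed proof; since the statement is invoked here as an external fact via \cite[Theorem 1]{gh}, I would cite that source rather than reproduce the full construction.
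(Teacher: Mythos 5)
The paper does not prove this statement at all: it is imported verbatim as an external Fact with the citation to Gilmore--Hoffman, so there is no internal proof to compare yours against. On its own merits, your ``only if'' direction is complete and correct -- the parity argument on the orientations of the edges of an odd cycle, followed by transitivity, is exactly the standard easy half (modulo the convention, used in the cited source, that a ``triangular chord'' of the cycle $v_1,\dots,v_n,v_1$ means an edge $\{v_{i-1},v_{i+1}\}$, so that the claim is not vacuously false for triangles).

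For the ``if'' direction you correctly identify the Gallai/Ghouila-Houri forcing relation $\Gamma$ and its implication classes as the engine of the proof, and you correctly isolate the core lemma (a class meeting its own reversal yields an odd cycle with no triangular chord). Two caveats on what you sketch. First, the transitivity check you describe only rules out directed paths $a\to b\to c$ with $\{a,c\}\notin E(G)$: if $a\to b$ and $b\to c$ were chosen while $a,c$ are non-adjacent, then $(a,b)\,\Gamma\,(c,b)$ forces $c\to b$, a contradiction. This shows $\{a,c\}$ must be an edge, but it does not yet show it is oriented $a\to c$ rather than $c\to a$; that final step needs the Triangle Lemma and the careful one-class-at-a-time selection argument (the $G$-decomposition), which is genuinely the hard part of the theorem and is not a routine consequence of ``no class is closed under reversal.'' Second, the core lemma itself is stated but not proved. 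Since you explicitly defer both points to the cited source -- which is precisely what the paper does for the entire statement -- your proposal is an acceptable and accurate account of the standard proof strategy rather than a self-contained proof, and that is consistent with how the result is used in the paper.
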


By the consideration above, we see the following.

\begin{cor}
\mylabel{cor:comp graph}
Let $G$ be a finite graph.
Suppose that each odd cycle of $G$ has a triangular chord.
Denote the canonical ideal of the Ehrhart ring of the stable set polytope of
$G$ as $\omega$.
Then for any positive integer $n$,
$\omega^{(n)}=\omega^n$ and $\omega^{(-n)}=(\omega^{(-1)})^n$.
In particular, the symbolic power of the canonical ideal of
the Ehrhart ring of the stable set polytope
of a bipartite graph is identical with the ordinary power.
\end{cor}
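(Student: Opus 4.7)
The plan is to reduce the statement to Theorem \ref{thm:symbolic power} by realizing the Ehrhart ring of the stable set polytope of $G$ as the Ehrhart ring of the chain polytope of a suitable poset. Nothing new needs to be proved about semigroup rings; everything is a matter of translating the graph-theoretic hypothesis into the existence of a compatible poset structure.

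First, I would invoke the cited fact of Ghouila-Houri: the hypothesis that every odd cycle of $G$ has at least one triangular chord is exactly the criterion for $G$ to be a comparability graph, so we obtain a finite poset $P$ with $G = G(P)$. Second, as observed in the paragraph just above the fact, a subset $A \subset P$ is a stable set of $G(P)$ if and only if $A$ is an antichain of $P$, and $\msCCC(P)$ is by definition the convex hull of $\{\chi_A \mid A \text{ is an antichain of } P\}$. Hence the stable set polytope of $G$ coincides with $\msCCC(P)$, and consequently its Ehrhart ring is $\kcp$ with canonical ideal $\omega$ as studied throughout the paper.

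Third, Theorem \ref{thm:symbolic power} applied to this $P$ immediately yields $\omega^{(n)} = \omega^n$ and $\omega^{(-n)} = (\omega^{(-1)})^n$ for every positive integer $n$. For the ``in particular'' statement, a bipartite graph has no odd cycles at all, so the odd-cycle condition is vacuously satisfied and the first part applies.

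There is no substantial obstacle: the whole corollary is an assembly of previously established results. The only point that deserves a sentence of care is the identification of the two polytopes (the stable set polytope of $G(P)$ versus $\msCCC(P)$), since this is what allows the translation from ``Ehrhart ring of a stable set polytope of a comparability graph'' to ``Ehrhart ring of a chain polytope of a poset,'' to which Theorem \ref{thm:symbolic power} directly applies.
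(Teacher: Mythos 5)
Your proposal is correct and is exactly the argument the paper intends: the Gilmore--Hoffmann criterion produces a poset $P$ with $G=G(P)$, the stable set polytope of $G$ is identified with $\msCCC(P)$, and Theorem \ref{thm:symbolic power} applies (with the bipartite case being vacuous since such graphs have no odd cycles). The paper leaves this as "by the consideration above," and your write-up just makes those same steps explicit.
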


%%%%%%%%%%%%%%%%%%%%%%%%%%%%%%%%%%%%%%%%%
%
%
\section{Characterizations of level and anticanonical level properties of $\kcp$}

In this section, we describe generators of $\omega$ (resp.\ $\omega^{(-1)}$)
by ``\scn' " in $P$.
Recall that we \cite{mo,mf} described generators of the canonical and anticanonical
ideals of $\kop$ by ``\scn" in $P$.

First we recall the definition of \condn\ and define \condn' of a sequence
of elements of $P$.

\begin{definition}
\rm
\mylabel{def:condn}
Let $y_0$, $x_1$, $y_1$, $x_2$, \ldots, $y_{t-1}$, $x_t$ be a sequence of elements 
of $P$.
We say that $y_0$, $x_1$, $y_1$, $x_2$, \ldots, $y_{t-1}$, $x_t$ satisfy \condn\ 
(resp.\ \condn') if
\begin{enumerate}
\item
$y_0>x_1<y_1>x_2<\cdots<y_{t-1}>x_t$ and
\item
if $i\leq j-2$, then $y_i\not\geq x_j$
(resp.\ $y_i\not>x_j$).
\end{enumerate}
We define that an empty sequence (i.e., $t=0$) satisfy \condn\ (resp. \condn').
\end{definition}
When considering a sequence $y_0$, $x_1$, \ldots, $y_{t-1}$, $x_t$ with \condn\
(resp.\ \condn'), we usually set $x_0=-\infty$ and $y_t=\infty$ and consider a
sequence $x_0$, $y_0$, $x_1$, \ldots, $y_{t-1}$, $x_t$, $y_t$ in $P^\pm$.

We note the following basic fact because it is used frequently.

\begin{lemma}
\mylabel{lem:cond np basic}
Let $y_0$, $x_1$, \ldots, $y_{t-1}$, $x_t$ be a \scn'.
If $y_i\leq y_j$ (or $x_i\leq x_j$) and $i\neq j$, then $i<j$.
In particular, $y_0$, $y_1$, \ldots, $y_{t-1}$ (resp.\ $x_1$, $x_2$, \ldots, $x_t$)
are different elements of $P$.
Further, if $y_k\leq x_\ell$, then $\ell\geq k+2$.
\end{lemma}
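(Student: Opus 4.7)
The plan is to prove each assertion by contradiction, leveraging condition N' in a nearly symmetric way, and then derive the ``further'' statement from the $x$-monotonicity part.

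First I would handle the two monotonicity claims. For $y_i\leq y_j$ with $j<i$, I would combine the hypothesis with the sequence inequality $y_i>x_{i+1}$ (which is available because $y_i$ being in the sequence forces $i\leq t-1$, hence $x_{i+1}$ exists) to obtain $y_j\geq y_i>x_{i+1}$, so $y_j>x_{i+1}$. The index pair $(j,i+1)$ satisfies $j\leq i-1=(i+1)-2$, so condition N' yields $y_j\not>x_{i+1}$, a contradiction. For $x_i\leq x_j$ with $j<i$ (and necessarily $j\geq 1$ since $x_j$ is in the sequence), I would instead chain against the sequence inequality $y_{j-1}>x_j$ to get $y_{j-1}>x_j\geq x_i$, hence $y_{j-1}>x_i$. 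Here the index pair $(j-1,i)$ satisfies $j-1\leq i-2$ (equivalent to $j<i$), so condition N' again gives $y_{j-1}\not>x_i$, a contradiction. The ``in particular'' assertion that the $y_i$ (resp. $x_i$) are pairwise distinct is immediate: if $y_i=y_j$ with $i\neq j$, the already-proved part applied with the two inequalities $y_i\leq y_j$ and $y_j\leq y_i$ would force both $i<j$ and $j<i$.

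For the last claim, suppose $y_k\leq x_\ell$. Using the sequence inequality $y_k>x_{k+1}$, I obtain $x_\ell\geq y_k>x_{k+1}$, so $x_{k+1}<x_\ell$; in particular $x_{k+1}\leq x_\ell$ and $k+1\neq\ell$. Applying the just-established monotonicity statement for the $x$-indices yields $k+1<\ell$, i.e., $\ell\geq k+2$, as required.

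I do not anticipate any substantive obstacle: the argument is essentially a matching game between the two inequalities in condition N' and the ``zig-zag'' inequalities built into condition (1) of the definition. The only points that require a little care are verifying that the indices stay within range (namely, $y_{j-1}$ exists because $j\geq 1$, and $x_{i+1}$ exists because $i\leq t-1$), and noting that the strictness in $y_{j-1}>x_j\geq x_i$ propagates to $y_{j-1}>x_i$ so that condition N' applies.
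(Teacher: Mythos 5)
Your proof is correct and follows essentially the same route as the paper: the two monotonicity claims are obtained by exactly the same pairings of the zig-zag inequalities with condition N'. The only (immaterial) difference is in the final claim, where you reduce $y_k\leq x_\ell\Rightarrow\ell\geq k+2$ to the already-proved $x$-monotonicity via $x_{k+1}<x_\ell$, whereas the paper reruns the condition N' argument directly on the pair $(\ell-1,k+1)$; both unwind to the same computation.
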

\begin{proof}
If $j<i$, then 
$y_j\geq y_i>x_{i+1}$
(or $y_{j-1}>x_j\geq x_i$)
and $j\leq (i+1)-2$
(or $j-1\leq i-2$),
contradicting \condn'.
Similarly, if $\ell\leq k$, then $\ell-1\leq(k+1)-2$ and 
$y_{\ell-1}>x_\ell\geq y_k>x_{k+1}$ violating \condn'.
Further, since $x_{k+1}<y_k$, we see $\ell\neq k+1$.
Thus, $\ell\geq k+2$.
\end{proof}

%Taking into account of this fact,
Here we recall the definition of level (resp.\ anticanonical level) property.

\begin{definition}[\cite{sta1,pag}]\rm
\mylabel{def:anticanonical level}
Let $R$ be a standard graded \cm\ algebra over a field.
If the degree of all the generators of the canonical module $\omega$ of $R$ are the same,
then we say that $R$ is level.
Moreover, if $R$ is normal (thus, is a domain) 
and the degree of all the generators of $\omega^{(-1)}$ 
are the same, we say that $R$ is anticanonical level.
\end{definition}

In the following, we characterize if a Laurent monomial in $\omega$
%the canonical ideal of $\kcp$,
(resp.\ $\omega^{(-1)}$) is a generator by \sscn' in $P$ and also characterize
level and anticanonical level properties of $\kcp$ by \sscn' in $P$.
Since the cases of $\omega$ and $\omega^{(-1)}$ are similar, we treat both
cases simultaneously.

In the following, let $\epsilon$ be $1$ or $-1$.
Further, we define the following notation.

\begin{definition}
\rm
Let $\xi\in \ZZZ^P$, 
$\xi\in \ZZZ^{P^-}$, 
$\xi\in \ZZZ^{P^+}$ or
$\xi\in \ZZZ^{P^\pm}$ and $n\in \ZZZ$.
We set
$C_\xi^{[n]}\define\{ C\mid C$ is a maximal chain in $P$ and 
$\xip(C)=n\}$.
\end{definition}
Note that if $\xi\in\SSSSS^{(m)}$ and $\xi(-\infty)=d$, then $C_\xi^{[n]}=\emptyset$
 for any $n$ with $n>d-m$.

Next we note the following.

\begin{lemma}
\mylabel{lem:not a gen}
Suppose that $\xi\in\se$ and set $d\define\xi(-\infty)$.
Then $T^\xi$ is not a generator of $\omegae$ if and only if there
is an antichain $A$ (which may be an empty set) such that
for any $C\in C_\xi^{[d-\epsilon]}$,
$C\cap A\neq\emptyset$ and $\xi(a)>\epsilon$ for the unique element $a\in C\cap A$.
(This condition is trivially satisfied if $C_\xi^{[d-\epsilon]}=\emptyset$.)
\end{lemma}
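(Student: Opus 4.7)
The proof proposal is to first translate ``$T^\xi$ is a generator of $\omegae$'' into a concrete factorisation condition via the standard graded structure of $\kcp$ (Proposition \ref{prop:kcp stand}): the degree-one generators of $\kcp$ are precisely the monomials $T^{\chi_{A \cup \{-\infty\}}}$ as $A$ ranges over the antichains of $P$ (including $\emptyset$). Thus $T^\xi$ is not a minimal generator of $\omegae$ if and only if it factors as $T^\xi = T^{\chi_{A \cup \{-\infty\}}} \cdot T^{\xi'}$ for some antichain $A$ of $P$ and $\xi' \in \se$, the latter being the monomial characterisation of $\omegae$ from Proposition \ref{prop:in omega n}. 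The lemma then becomes a matter of producing or interpreting this antichain.

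For the ``only if'' direction, I would take such an antichain $A$ from an assumed factorisation, write $\xi = \chi_{A \cup \{-\infty\}} + \xi'$, and use $\xi'(-\infty) = d - 1$ together with $\xi' \in \se$ to conclude $\xip{}'(C) \leq d - 1 - \epsilon$ for every maximal chain $C$. For a chain $C \in C_\xi^{[d-\epsilon]}$, combining $\xip(C) = d - \epsilon$ with this inequality gives $\chi_A^+(C) \geq 1$; because $A$ is an antichain meeting the chain $C$, this forces $|A \cap C| = 1$, and for the unique $a \in A \cap C$ the identity $\xi(a) = 1 + \xi'(a) \geq 1 + \epsilon$ yields $\xi(a) > \epsilon$.

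For the ``if'' direction, given an antichain $A$ as in the statement, the crucial move is to pass to the sub-antichain $A'' \define \{a \in A \mid \xi(a) > \epsilon\}$. This $A''$ still meets every chain in $C_\xi^{[d-\epsilon]}$ in exactly one element, because the unique intersection $a \in A \cap C$ of any tight chain already satisfies $\xi(a) > \epsilon$. Setting $\xi' \define \xi - \chi_{A'' \cup \{-\infty\}}$, I would verify $\xi' \in \se$ by routine casework: on $P$, the inequality $\xi'(w) \geq \epsilon$ is automatic off $A''$ and uses $\xi(a) \geq \epsilon + 1$ on $A''$; for a maximal chain $C$, either $C$ is tight (then $\xip{}'(C) = d - 1 - \epsilon$ is forced by $\chi_{A''}^+(C) = 1$) or $C$ is not tight (then $\xip(C) \leq d - \epsilon - 1$ suffices). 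The resulting factorisation $T^\xi = T^{\chi_{A'' \cup \{-\infty\}}} \cdot T^{\xi'}$ witnesses that $T^\xi$ is not a minimal generator.

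The only genuine subtlety is that the lemma imposes $\xi(a) > \epsilon$ only on those $a \in A$ that lie on some tight chain, while a direct subtraction of $\chi_{A \cup \{-\infty\}}$ would break positivity at elements of $A$ where $\xi$ equals $\epsilon$. Restricting to $A''$ circumvents this without losing coverage of the tight chains; the trivial case $C_\xi^{[d-\epsilon]} = \emptyset$ is then handled by $A = A'' = \emptyset$, for which the factorisation reduces to pulling out $T_{-\infty}$.
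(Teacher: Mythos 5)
Your proposal is correct and follows essentially the same route as the paper: both directions reduce, via Proposition \ref{prop:kcp stand}, to a factorisation $T^\xi=T^{\chi_{A'\cup\{-\infty\}}}T^{\xi'}$ with $A'$ an antichain and $\xi'\in\se$, and your sub-antichain $A''=\{a\in A\mid\xi(a)>\epsilon\}$ plays exactly the role of the paper's $A\cap\bigl(\bigcup_{C\in C_\xi^{[d-\epsilon]}}C\bigr)$ in the ``if'' direction. No gaps.
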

\begin{proof}
We first prove the ``if'' part.
Set $\xi_1\colon P^-\to\ZZZ$ by
$$
\xi_1(z)\define
\begin{cases}
1,&\mbox{if $z\in A\cap(\bigcup_{C\in C_\xi^{[d-\epsilon]}}C)$ or $z=-\infty$;}\\
0,&\mbox{otherwise}
\end{cases}
$$
and $\xi_2\define\xi-\xi_1$.
Then $\xi_2(z)\geq \epsilon$ for any $z\in P$ by assumption.
Further, $C_{\xi_2}^{[m]}=\emptyset$ for any $m$ with 
$m\geq d-\epsilon$ since $A\cap C\neq\emptyset$
for any $C\in C_\xi^{[d-\epsilon]}$ and $C_\xi^{[n]}=\emptyset$ for any $n$ with
$n>d-\epsilon$.
Thus, $\xip_2(C)\leq d-\epsilon -1$ for any maximal chain $C$ in $P$.
%since $\xip(C)+\epsilon\leq\xi(-\infty)=d$.
Therefore, $\xi_2\in\se$ since $\xi_2(-\infty)=d-1$
and we see that $T^{\xi_2}\in\omegae$.
Since $\xi_1\in\SSSSS^{(0)}$, we see that 
$T^{\xi_1}\in\kcp$ and therefore
%$T^\xi$ is contained in a submodule
%of $\omegae$ generated by $T^{\xi_2}$.
%Thus, 
$T^\xi$ is not a generator of $\omegae$,
since $T^\xi=T^{\xi_1}T^{\xi_2}$.

Now we prove the ``only if'' part.
Suppose that $T^\xi$ is not a generator of $\omegae$.
Then, since $\kcp$ is a standard graded algebra, there are
$\xi_1\in\SSSSS^{(0)}$ and $\xi_2\in\se$, such that $\xi=\xi_1+\xi_2$
and $\xi_1(-\infty)=1$.
Set $A\define\{a\in P\mid \xi_1(a)>0\}$.
Then, since $\xi_1(-\infty)=1$ and $\xi_1\in\SSSSS^{(0)}$, we see that
$A$ is an antichain.
Let $C$ be an arbitrary element of $C_\xi^{[d-\epsilon]}$.
Since $\xi_2(-\infty)=d-1$ and $\xi_2\in\se$, we see that 
$\xip_2(C)+\epsilon\leq d-1$.
Since $\xip_1(C)+\xip_2(C)=\xip(C)=d-\epsilon$, we see that $\xip_1(C)\geq 1$.
Thus, $C\cap A\neq \emptyset$ and for the unique element $a\in A\cap C$,
$\xi(a)=\xi_1(a)+\xi_2(a)\geq1+\epsilon >\epsilon$
since $\xi_2\in\se$.
\end{proof}
By using this criterion of a Laurent monomial $T^\xi\in\omegae$ to be a generator
of $\omegae$, we next state a sufficient condition for a Laurent monomial 
$T^\xi\in\omegae$ to be a generator of $\omegae$.

\begin{lemma}
\mylabel{lem:gen suf}
Let $\xi\in\se$ and set $\xi(-\infty)=d$.
If there are elements $z_0$, $w_1$, $z_1$, $w_2$, \ldots, $z_{s-1}$, $w_s$
of $P$ with
$z_0>w_1<z_1>w_2<\cdots<z_{s-1}>w_s$ and
$C_0$, $C_1$, \ldots, $C_s\in C_\xi^{[d-\epsilon]}$ such that
% for any $i$ with $0\leq i\leq s$,
%there exists $C_i\in C_\xi^{[d-\epsilon]}$ such that
\begin{enumerate}
\item
$s=0$ or
$C_0\ni z_0$, $C_s\ni w_s$ and
$C_i\ni w_i$, $z_i$ for $1\leq i\leq s-1$
(the latter part is trivially valid if $s=1$),  and
\item
$\xi(z)=\epsilon$ for any $z\in C_i\cap(w_i,z_i)$ and for any $i$ with 
$0\leq i\leq s$, where we set $w_0=-\infty$ and $z_s=\infty$,
\end{enumerate}
then $T^\xi$ is a generator of $\omegae$
\end{lemma}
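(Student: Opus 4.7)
The plan is to argue by contradiction using Lemma \ref{lem:not a gen}. Suppose $T^\xi$ is not a generator of $\omegae$. Then there exists an antichain $A \subseteq P$ such that every $C \in C_\xi^{[d-\epsilon]}$ meets $A$ in a single element whose $\xi$-value exceeds $\epsilon$. Applying this to each of the given chains $C_0, C_1, \ldots, C_s$, I obtain elements $a_i \in C_i \cap A$ with $\xi(a_i) > \epsilon$ for every $0 \le i \le s$.

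The next step is to restrict where $a_i$ can sit on $C_i$. Condition (2) says that every $z \in C_i \cap (w_i, z_i)$ has $\xi(z) = \epsilon$, so $a_i$ cannot lie in the open interval $(w_i, z_i)$. Since $a_i$, $w_i$, and $z_i$ all belong to the chain $C_i$ whenever they are in $P$, $a_i$ is comparable with both, which forces $a_i \le w_i$ or $a_i \ge z_i$. The conventions $w_0 = -\infty$ and $z_s = \infty$ then pin down the endpoint indices: $a_0 \ge z_0$ at $i = 0$ and $a_s \le w_s$ at $i = s$. In the degenerate case $s = 0$, these collapse to $\xi(a_0) = \epsilon$ on $C_0$, directly contradicting $\xi(a_0) > \epsilon$, so one may henceforth assume $s \ge 1$.

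The main combinatorial step is a short induction showing $a_i \ge z_i$ for all $0 \le i \le s - 1$. The base case $i = 0$ has just been established. For the inductive step, if instead $a_i \le w_i$, then combined with $a_{i-1} \ge z_{i-1}$ and the alternation $z_{i-1} > w_i$ from the given sequence, this would yield $a_i \le w_i < z_{i-1} \le a_{i-1}$, hence $a_i < a_{i-1}$, contradicting that $A$ is an antichain. Applying the same trick once more, the endpoint fact $a_s \le w_s$ together with $a_{s-1} \ge z_{s-1}$ and $w_s < z_{s-1}$ produces $a_s < a_{s-1}$, the final antichain violation, completing the contradiction. The only delicate point I anticipate is keeping careful track of the endpoint conventions at $i = 0$ and $i = s$ so that the chain of comparisons closes correctly between consecutive $C_i$; beyond that it is a clean pigeonhole-style argument driven entirely by the alternating inequalities.
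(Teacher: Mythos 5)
Your proposal is correct and follows essentially the same route as the paper: invoke Lemma \ref{lem:not a gen} to get the antichain $A$, note that each $a_i=A\cap C_i$ must satisfy $a_i\leq w_i$ or $a_i\geq z_i$, and locate a consecutive pair with $a_{i-1}\geq z_{i-1}>w_i\geq a_i$ contradicting that $A$ is an antichain. The paper phrases the last step as picking the first index where the switch occurs rather than as an induction, but the argument is the same.
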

\begin{proof}
Assume the contrary.
Then by Lemma \ref{lem:not a gen}, we see that there is an antichain $A$ in $P$ such that
for any $C\in C_\xi^{[d-\epsilon]}$, $A\cap C\neq\emptyset$ and $\xi(a)>\epsilon$ for the
unique element $a\in A\cap C$.

If $s=0$, then for $a\in A\cap C_0$, $\xi(a)=\epsilon$ by assumption and this is a contradiction.
Thus, $s>0$.
Set $A\cap C_i=\{a_i\}$ for $0\leq i\leq s$.
Since $a_s\leq w_s$ and $a_0\geq z_0$ by assumption, we see that there exists 
$i$ with $1\leq i\leq s$, $a_i\leq w_i$ and $a_{i-1}\geq z_{i-1}$.
Since $z_{i-1}>w_i$, this contradicts to the fact that $A$ is an antichain.
\end{proof}

Next we show a strong converse of this lemma.
We show that if $T^\xi$ is a generator of $\omegae$, then we can not only take a
sequence $z_0$, $w_1$, \ldots, $z_{s-1}$, $w_s$ satisfying the condition of 
Lemma \ref{lem:gen suf}, but also $z_0$, $w_1$, \ldots, $z_{s-1}$, $w_s$ satisfy \condn'.
Further, we show that we can take sequence which are ``\qered''.

\begin{definition}
\rm
\mylabel{def:qered}
For a sequence of elements $w_0$, $z_0$, $w_1$, \ldots, $z_{s-1}$, $w_s$, $z_s$
with 
$w_0<z_0>w_1<\cdots<z_{s-1}>w_s<z_s$ in $P^\pm$, we set
$$
\qe(w_0,z_0, \ldots, w_s, z_s)\define
\sum_{\ell=0}^s\qedist(w_\ell,z_\ell)-\sum_{\ell=0}^{s-1}\qedist(w_{\ell+1},z_\ell).
$$
Further, for a sequence
$y_0$, $x_1$, \ldots, $y_{t-1}$, $x_t$ of elements in $P$ with \condn',
we say that $y_0$, $x_1$, \ldots, $y_{t-1}$, $x_t$ is \qered\ if
$$
\qedist(x_i,y_j)<\qe(x_i,y_i,\ldots,x_j,y_j)
$$
for any $x_i$, $y_j$ with $x_i<y_j$ and $0\leq i<j\leq t$,
where we set $x_0=-\infty$ and $y_t=\infty$.
\end{definition}

\begin{example}
\rm
If there is a part of the sequence of the following form
$$
%\vcenter{%\hsize=60\unitlength\relax
\begin{picture}(60,40)

\put(10,10){\circle*{3}}
\put(10,30){\circle*{3}}
\put(30,10){\circle*{3}}
\put(30,30){\circle*{3}}
\put(50,10){\circle*{3}}
\put(50,30){\circle*{3}}

\put(20,20){\circle*{3}}

\put(10,10){\line(2,1){40}}
\put(10,10){\line(0,1){20}}
\put(10,30){\line(1,-1){20}}
\put(30,10){\line(0,1){20}}
\put(30,30){\line(1,-1){20}}
\put(50,10){\line(0,1){20}}

\put(9,31){\makebox(0,0)[br]{$y_i$}}
\put(9,9){\makebox(0,0)[tr]{$x_i$}}
\put(30,9){\makebox(0,0)[t]{$x_{i+1}$}}
\put(30,31){\makebox(0,0)[b]{$y_{i+1}$}}
\put(51,9){\makebox(0,0)[tl]{$x_{i+2}$}}
\put(51,31){\makebox(0,0)[bl]{$y_{i+2}$}}

\end{picture}
%}
\begin{picture}(10,40)
\put(5,20){\makebox(0,0){or}}
\end{picture}
%\vcenter{\hsize=150\unitlength\relax
\begin{picture}(40,40)
%\put(30,3){\makebox(0,0)[t]{$P_3\setminus\{x_0\}$}}

\put(10,10){\circle*{3}}
\put(10,30){\circle*{3}}
\put(30,10){\circle*{3}}
\put(30,30){\circle*{3}}

%\put(20,20){\circle*{3}}

\put(10,10){\line(1,1){20}}
\put(10,10){\line(0,1){20}}
\put(10,30){\line(1,-1){20}}
\put(30,10){\line(0,1){20}}

\put(9,31){\makebox(0,0)[br]{$y_i$}}
\put(9,9){\makebox(0,0)[tr]{$x_i$}}
\put(31,9){\makebox(0,0)[t]{$x_{i+1}$}}
\put(31,31){\makebox(0,0)[b]{$y_{i+1}$}}

\end{picture}
%}
$$
then it is not \qonered,
since
$\qonedist(x_i,y_{i+2})=1>0=\qone(x_i,y_i,x_{i+1},y_{i+1},x_{i+2},y_{i+2})$
in the former case and
$\qone\dist(x_i,y_{i+1})=1=\qone(x_i,y_i,x_{i+1},y_{i+1})$
in the latter case.
Later, we seek a 
sequence
$y_0$, $x_1$, \ldots, $y_{t-1}$, $x_t$ with
\condn'\ 
such that
$\qe(x_0, y_0, \ldots, x_t, \infty)$ as large as possible.
%where $\epsilon=1$ or $-1$.
If there is a part of the first kind in the \scn', we can replace it with
$y_0$, $x_1$, \ldots, $x_i$, $y_{i+2}$, $x_{i+2}$, \ldots, $x_t$
and obtain a sequence with larger $\qone(x_0,y_0,\ldots)$.
Further, if there is a part of the second kind in the \scn', we apply 
the replacement above and remove redundancy.
\end{example}

\begin{lemma}
\mylabel{lem:gen nec}
Let $\xi$ be an element of $\se$ with $T^\xi$ is a generator of $\omegae$.
Set $d\define \xi(-\infty)$.
Then there exists a \qered\ sequence $y_0$, $x_1$, \ldots, $y_{t-1}$, $x_t$ with \condn'
such that there are $C_0$, $C_1$, \ldots, $C_t\in C_\xi^{[d-\epsilon]}$ with
the following conditions.
\begin{enumerate}
\item
$t=0$ or
$C_0\ni y_0$, $C_t\ni x_t$ and
$C_i\ni x_i$, $y_i$ for $1\leq i\leq t-1$,  and
\item
$\xi(z)=\epsilon$ for any $z\in C_i\cap(x_i,y_i)$ and for any $i$ with 
$0\leq i\leq t$, where we set $x_0=-\infty$ and $y_t=\infty$.
\end{enumerate}
%$x_i$, $y_i\in C_i$ for $1\leq i\leq t-1$, $y_0\in C_0$ and $x_t\in C_t$ if $t>0$ and
%for any $i$ with $0\leq i\leq t$ and any $z\in C_i\cap(x_i,y_i)$, $\xi(z)=\epsilon$,
%where we set $x_0=-\infty$ and $y_t=\infty$.
\end{lemma}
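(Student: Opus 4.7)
The plan is to first produce some sequence satisfying conditions (1) and (2) together with \condn'\ by a greedy inductive construction guided by Lemma~\ref{lem:not a gen}, and then to upgrade to a \qered\ sequence by an optimization step.

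First I will verify that $C_\xi^{[d-\epsilon]}$ is nonempty: applying Lemma~\ref{lem:not a gen} with $A=\emptyset$, the clause ``$C\cap A\neq\emptyset$ for every $C$'' is vacuously true when $C_\xi^{[d-\epsilon]}=\emptyset$, which would contradict that $T^\xi$ is a generator. If some $C_0\in C_\xi^{[d-\epsilon]}$ has $\xi(z)=\epsilon$ for every $z\in C_0$, the conclusion holds with $t=0$; so assume no such chain exists. I then build the sequence step by step. Having constructed $y_0,x_1,\dots,x_i,y_i$ with witnessing chains $C_0,\dots,C_i$ satisfying (1), (2) and \condn', I form a suitable antichain $A$ from an antichain-subset of $\{y_0,\dots,y_i\}$ together with appropriate ``forbidden'' descendants, and apply Lemma~\ref{lem:not a gen} to $A$: the resulting chain $C_{i+1}\in C_\xi^{[d-\epsilon]}$ must, by the choice of $A$, avoid $y_i$ and meet no forbidden element with $\xi>\epsilon$. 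Taking $x_{i+1}\in C_{i+1}$ to be the largest element strictly below $y_i$ (which exists because $y_i\notin C_{i+1}$ and $C_{i+1}$ is a maximal chain of $P$) and $y_{i+1}$ to be the cover of $x_{i+1}$ in $C_{i+1}$, conditions (1) and (2) are preserved by construction, while Lemma~\ref{lem:cond np basic} ensures the $y_j$'s are pairwise distinct so that the process terminates in finitely many steps.

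Once a valid sequence exists, I choose, among all sequences satisfying (1), (2) and \condn', one that maximizes $\qe(x_0,y_0,\dots,x_t,y_t)$ with $x_0=-\infty$ and $y_t=\infty$; the maximum is attained since $P$ is finite. I claim this maximizer is automatically \qered. Indeed, if $\qedist(x_i,y_j)\geq\qe(x_i,y_i,\dots,x_j,y_j)$ for some $i<j$ with $x_i<y_j$, then the replacement moves illustrated in the Example preceding the lemma---splicing a saturated chain from $x_i$ to $y_j$ realizing the distance into the witnessing chains---would yield a new sequence in the same family with strictly larger value of $\qe$, contradicting maximality.

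I expect the main obstacle to be the inductive step: choosing the right antichain $A$ so that Lemma~\ref{lem:not a gen} produces a chain $C_{i+1}$ which is simultaneously compatible with all previously chosen $y_j$'s under \condn'\ (i.e.\ $y_j\not>x_{i+1}$ for $j\leq i-1$) and satisfies $\xi(z)=\epsilon$ on the open interval $C_{i+1}\cap(x_{i+1},y_{i+1})$. This will likely require backtracking and refining the earlier choices of $C_0,\dots,C_i$, or equivalently reformulating the whole argument as a lexicographic optimization over all pairs (sequence, chain-list) from the outset, with \condn'\ and the chain conditions both extracted from the optimality conditions via Lemma~\ref{lem:not a gen}.
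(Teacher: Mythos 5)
Your opening observations (nonemptiness of $C_\xi^{[d-\epsilon]}$ via Lemma \ref{lem:not a gen} with $A=\emptyset$, and the reduction to the case where every chain in $C_\xi^{[d-\epsilon]}$ carries some $\xi>\epsilon$) are correct, but the core of the proof --- actually producing a sequence satisfying (1), (2) \emph{and} \condn' --- is not carried out. Your greedy forward step is underspecified in exactly the place where the work lies: Lemma \ref{lem:not a gen} only tells you that for \emph{any} antichain $A$ there is some $C\in C_\xi^{[d-\epsilon]}$ that either misses $A$ or meets it at an element with $\xi=\epsilon$; it does not let you force $C_{i+1}$ to avoid $y_i$ while simultaneously guaranteeing $y_j\not>x_{i+1}$ for all $j\leq i-1$ and $\xi=\epsilon$ on $C_{i+1}\cap(x_{i+1},y_{i+1})$ (your choice of $y_{i+1}$ as the cover of $x_{i+1}$ in $C_{i+1}$ makes the latter vacuous but destroys the terminal condition on $C_t$, which requires $x_t$ to be the \emph{top} element of $C_t$ with $\xi>\epsilon$). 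You acknowledge this yourself and defer it to ``backtracking'' or a ``lexicographic optimization,'' which is precisely the missing idea. The paper resolves it globally rather than greedily: it defines the increasing families $Y_0\subset Y_1\subset\cdots$ and $X_1\subset X_2\subset\cdots$ of all elements reachable by alternating ``descend'' and ``ascend along a chain of $C_\xi^{[d-\epsilon]}$ to the next value $>\epsilon$'' moves, sets $A\define\max\bigl(\bigcup_iY_i\bigr)$, uses the generator hypothesis to find a chain $C$ with $C\cap A=\emptyset$, and extracts the sequence \emph{backwards} from the top element of $C$ with $\xi>\epsilon$; \condn' then falls out of the minimality of the indices $\ell$ with $x_k\in X_\ell$.

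The optimization step for \qered ness also has two gaps. First, the splice replaces $\qe(\mathrm{old})$ by $\qe(\mathrm{old})+\qedist(x_i,y_j)-\qe(x_i,y_i,\ldots,x_j,y_j)$, which under the negation of \qered ness is only $\geq\qe(\mathrm{old})$, not strictly larger; so a $\qe$-maximizer need not be \qered\ unless you also minimize $t$ among maximizers (the splice does strictly shorten the sequence). Second, and more seriously, the shortened sequence is only ``in the same family'' if the spliced chain $C''=(C_i\cap(-\infty,x_i])\cup\{z'_1,\ldots,z'_{s'-1}\}\cup(C_j\cap[y_j,\infty))$ again lies in $C_\xi^{[d-\epsilon]}$ and satisfies $\xi(z'_m)=\epsilon$ on the connecting segment. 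Neither is automatic; in the paper these follow from forcing equality throughout a chain of inequalities (the displays culminating in \refeq{eq:cross sum xip}), which is the technical heart of the argument and is absent from your proposal.
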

\begin{proof}
First we note that $C_\xi^{[d-\epsilon]}\neq\emptyset$ by Lemma \ref{lem:not a gen}.
Further, if there exists $C\in C_\xi^{[d-\epsilon]}$ with $\xi(z)=\epsilon$ for any $z\in C$,
then the empty sequence satisfies the required condition.
Thus, in the following of the proof, we assume that for any $C\in C_\xi^{[d-\epsilon]}$,
there is $z\in C$ with $\xi(z)>\epsilon$.

Put
$Y_0\define\{y\in P\mid \exists C\in C_\xi^{[d-\epsilon]}; y=\min\{c\in C\mid \xi(c)>\epsilon\}\}$,
$X_1\define\{x\in P\mid \exists y\in Y_0; x<y\}$,
$Y_1\define Y_0\cup\{y\in P\mid \exists x\in X_1, \exists C\in C_\xi^{[d-\epsilon]}; 
x\in C, y=\min\{c\in C\mid c>x, \xi(c)>\epsilon\}\}$,
$X_2\define\{x\in P\mid \exists y\in Y_1; x<y\}$,
$Y_2\define Y_1\cup\{y\in P\mid \exists x\in X_2, \exists C\in C_\xi^{[d-\epsilon]}; 
x\in C, y=\min\{c\in C\mid c>x, \xi(c)>\epsilon\}\}$,
$X_3\define\{x\in P\mid \exists y\in Y_2; x<y\}$
and so on.
Also put $X\define\bigcup_{i=1}^\infty X_i$ and $Y\define\bigcup_{i=0}^\infty Y_i$.
Set $A\define \max Y$.
Then $A$ is an antichain in $P$.
Moreover, $Y\setminus A\subset X$.
In fact, if $y\in Y\setminus A$, there is $a\in A$ with $a>y$.
Since $a\in A\subset Y$, we see that $y\in X$.

If $A\cap C\neq\emptyset$ for any $C\in C_\xi^{[d-\epsilon]}$, then by Lemma \ref{lem:not a gen},
we see that $T^\xi$ is not a generator of $\omegae$, since $\xi(a)>\epsilon$ for any $a\in A$,
contradicting the assumption.
Therefore, there exists $C\in C_\xi^{[d-\epsilon]}$ such that $C\cap A=\emptyset$.

Take such $C$ and set $\{c\in C\mid \xi(c)>\epsilon\}=\{c_1,c_2,\ldots, c_v\}$,
$c_1<c_2<\cdots<c_v$.
By the definition of $Y_0$, we see that $c_1\in Y_0\subset Y$.
Since $C\cap A=\emptyset$, we see that $c_1\in Y\setminus A\subset X$.
Therefore, we see that $c_2\in Y$, and so on.
Repeating this argument, we see that $c_v\in X$.

Set $t\define\min\{\ell\mid c_v\in X_\ell\}$, $x_t\define c_v$ and $C_t\define C$.
Since $x_t\in X_t$, we see that there is $y_{t-1}\in Y_{t-1}$ with $y_{t-1}>x_t$.
Note that $y_{t-1}\not\in Y_{t-2}$ since $x_t\not\in X_{t-1}$.
By the definition of $Y_{t-1}$, we see that there are $x_{t-1}\in X_{t-1}$ and
$C_{t-1}\in C_{\xi}^{[d-\epsilon]}$ with 
$x_{t-1}\in C_{t-1}$ and
$y_{t-1}=\min\{c\in C_{t-1}\mid \xi(c)>\epsilon, c>x_{t-1}\}$.
Note that $x_{t-1}\not\in X_{t-2}$ since $y_{t-1}\not\in Y_{t-2}$.
Since $x_{t-1}\in X_{t-1}$, there is $y_{t-2}\in Y_{t-2}$ with $y_{t-2}>x_{t-1}$.
Note that $y_{t-2}\not\in Y_{t-3}$ since $x_{t-1}\not\in X_{t-2}$.

Continueing this argument, we see that there exist a sequence
$y_0$, $x_1$, \ldots, $y_{t-1}$, $x_t$ of elements of $P$ 
and $C_1$, $C_2$, \ldots, $C_t\in C_\xi^{[d-\epsilon]}$ such that
$y_0>x_1<\ldots<y_{t-1}>x_t$,
$x_k\in X_k$ for $1\leq k \leq t$, $x_k\not\in X_{k-1}$ for $2\leq k\leq t$,
$y_k\in Y_k$ for $0\leq k\leq t-1$, $y_k\not\in Y_{k-1}$ for $1\leq k\leq t-1$,
$y_k$, $x_k\in C_k$ and $\xi(z)=\epsilon$ for any $z\in C_k\cap(x_k,y_k)$ 
for $1\leq k\leq t-1$,
$x_t\in C_t$ and $\xi(z)=\epsilon$ for any $z\in C_t\cap(x_t,\infty)$.
Since $x_k\not\in X_{k-1}$ for $2\leq k\leq t$, we see that if $i\leq j-2$,
then $y_i\not> x_j$,
i.e., $y_0$, $x_1$, \ldots, $y_{t-1}$, $x_t$ satisfies \condn'.
Moreover, by the definition of $Y_0$, we can take $C_0\in C_\xi^{[d-\epsilon]}$ with
$y_0=\min\{c\in C_0\mid \xi(c)>\epsilon\}$.
Then $y_0\in C_0$ and for any $z\in C_0\cap(-\infty,y_0)$, $\xi(z)=\epsilon$.

Next we show that $y_0$, $x_1$, \ldots, $y_{t-1}$, $x_t$ is \qered.
Assume the contrary and suppose that there are $i$ and $j$ with $0\leq i<j\leq t$,
$x_i<y_j$ and
%$\qedist(x_i,y_j)\geq\qe(x_i,y_i,\ldots,x_j,y_j)$.
$\qedist(x_i,y_j)\geq
\sum_{\ell=i}^j\qedist(x_\ell,y_\ell)-\sum_{\ell=i}^{j-1}\qedist(x_{\ell+1},y_\ell)$.
Then
\begin{equation}
\qedist(x_i,y_j)+\sum_{\ell=i}^{j-1}\qedist(x_{\ell+1},y_\ell)
\geq
\sum_{\ell=i}^j\qedist(x_\ell,y_\ell).
\mylabel{eq:cross sum}
\end{equation}
Take a saturated chain 
$x_{\ell+1}=z_{\ell,0}\covered z_{\ell,1}\covered\cdots\covered z_{\ell,s_\ell}=y_\ell$
with $\epsilon s_\ell=\qedist(x_{\ell+1},y_\ell)$ for $i\leq \ell\leq j-1$.
Then,
$C'_\ell\define(C_{\ell+1}\cap(-\infty,x_{\ell+1}])\cup\{z_{\ell,1},\ldots, z_{\ell, s_{\ell}-1}\}
\cup(C_\ell\cap[y_\ell,\infty))$
is a maximal chain in $P$ for $i\leq \ell\leq j-1$.
Further, take a saturated chain
$x_i=z'_0\covered z'_1\covered\cdots\covered z'_{s'}=y_j$
with $\epsilon s'=\qedist(x_i,y_j)$.
Then,
$C''\define(C_i\cap(-\infty,x_i])\cup\{z'_1,\ldots,z'_{s'-1}\}\cup(C_j\cap[y_j,\infty))$
is also a maximal chain in $P$.

Therefore, 
\begin{eqnarray}
&&\qedist(x_i,y_j)+\sum_{\ell=i}^{j-1}\qedist(x_{\ell+1},y_\ell)
\nonumber\\
&&\qquad
+\sum_{\ell=i}^j\Big(\xip(C_\ell\cap(-\infty,x_\ell])+\xip(C_\ell\cap[y_\ell,\infty))\Big)
\nonumber\\
&=&
(s'+\sum_{\ell=i}^{j-1}s_\ell)\epsilon
+\sum_{\ell=i}^j\Big(\xip(C_\ell\cap(-\infty,x_\ell])+\xip(C_\ell\cap[y_\ell,\infty))\Big)
\nonumber\\
&=&
((s'-1)+\sum_{\ell=i}^{j-1}(s_\ell-1))\epsilon+(j-i+1)\epsilon
\nonumber\\
&&\qquad
+\sum_{\ell=i}^j\Big(\xip(C_\ell\cap(-\infty,x_\ell])+\xip(C_\ell\cap[y_\ell,\infty))\Big)
\nonumber\\
&\leq&
\sum_{\ell=i}^{j-1}\Big(\xip(C_{\ell+1}\cap(-\infty,x_{\ell+1}])+
\sum_{m=1}^{s_\ell-1}\xi(z_{\ell,m})
+\xip(C_\ell\cap[y_\ell,\infty))\Big)
\nonumber\\
&&\qquad
+\xip(C_i\cap(-\infty, x_i])+\sum_{m=1}^{s'-1}\xi(z'_m)+\xip(C_j\cap[y_j,\infty))
\nonumber\\
&&\qquad
+(j-i+1)\epsilon
\mylabel{eq:cross sum xi}
\\
&=&
\sum_{\ell=i}^{j-1}\xip(C'_\ell)+\xip(C'')+\epsilon(j-i+1)
\nonumber\\
&\leq&
(j-i+1)\xi(-\infty)=d(j-i+1),
\mylabel{eq:cross sum xip}
\end{eqnarray}
since $C'_{\ell}$ for $i\leq \ell\leq j-1$ and $C''$ are maximal chains in $P$
and $\xi\in\SSSSS^{(\epsilon)}$.
On the other hand,
\begin{eqnarray*}
&&d(j-i+1)\\
&=&\sum_{\ell=i}^j\xip(C_\ell)+\epsilon(j-i+1)\\
&=&
\sum_{\ell=i}^j\Big(\xip(C_\ell\cap(-\infty,x_\ell])+\xip(C_\ell\cap(x_\ell,y_\ell))
+\xip(C_\ell\cap[y_\ell,\infty))+\epsilon\Big)\\
&=&
\sum_{\ell=i}^j\Big(\xip(C_\ell\cap(-\infty,x_\ell])+\xip(C_\ell\cap[y_\ell,\infty))
+(\#(C_\ell\cap(x_\ell,y_\ell))+1)\epsilon\Big)\\
&\leq&
\sum_{\ell=i}^j\Big(\xip(C_\ell\cap(-\infty,x_\ell])+\xip(C_\ell\cap[y_\ell,\infty)
+\qedist(x_\ell,y_\ell)\Big),
\end{eqnarray*}
since 
for any $i\leq \ell\leq j$, $C_\ell\in C_\xi^{[d-\epsilon]}$ and
$\xi(z)=\epsilon$ for any $z\in C_\ell\cap(x_\ell,y_\ell)$.
% and $i\leq \ell\leq j$.
Thus, we see by inequation \refeq{eq:cross sum} that equalities hold in \refeq{eq:cross sum xi}
and \refeq{eq:cross sum xip}
and therefore $\xi(z'_m)=\epsilon$ for $1\leq m\leq s'-1$
and $\xip(C'')=d-\epsilon$.
This means that $y_j\in Y_i$, contradicting the fact that $y_j\not\in Y_{j-1}$.

Therefore, we see that $y_0$, $x_1$, \ldots, $y_{t-1}$, $x_t$ is \qered.
\end{proof}
%
%
%%%%%%%%%%%%%%%%%%%%%%%%%%%%%%
\iffalse
Note that the \qered\ sequence $y_0$, $x_1$, \ldots, $y_{t-1}$, $x_t$ with \condn'
we took in the proof of Lemma \ref{lem:gen nec} satisfy $\xi(y_i)>\epsilon$ for 
$0\leq i\leq t-1$.
In fact, we can take a \qered\ sequence $y'_0$, $x'_1$, \ldots, $y'_{t-1}$, $x'_t$ with
\condn' so that $\xi_(y'_{i-1})>\epsilon$ and $\xi(x'_i)>\epsilon$ for $1\leq i\leq t$.
However, since we do not use this fact, we omit it for simplicity of the
proof of Lemma \ref{lem:gen nec}.
\fi
%%%%%%%%%%%%%%%%%%%%%%%%%%%%

By Lemmas \ref{lem:gen suf} and \ref{lem:gen nec}, we see the following.

\begin{prop}
\mylabel{prop:gen equiv}
Let $\xi\in\se$ and set $d\define\xi(-\infty)$.
Then the following conditions are equivalent.
\begin{enumerate}
\item
$T^\xi$ is a genenrator of $\omegae$.
\item
There are elements $z_0$, $w_1$, \ldots, 
$z_{s-1}$, $w_s$
of $P$ with 
$z_0>w_1<\cdots<z_{s-1}>w_s$ 
and $C_0$, $C_1$, \ldots, $C_s\in C_\xi^{[d-\epsilon]}$ such that
%and for any $i$ with $0\leq i\leq s$,
%there exists $C_i\in C_\xi^{[d-\epsilon]}$ such that
\begin{enumerate}
\item
$s=0$ or
$C_0\ni z_0$, $C_s\ni w_s$ and
$C_i\ni w_i$, $z_i$ for $1\leq i\leq s-1$,  and
\item
$\xi(z)=\epsilon$ for any $z\in C_i\cap(w_i,z_i)$ and for any $i$ with 
$0\leq i\leq s$, where we set $w_0=-\infty$ and $z_s=\infty$,
\end{enumerate}
\item
\mylabel{item:qered}
There exists a \qered\ sequence $y_0$, $x_1$, \ldots, $y_{t-1}$, $x_t$ with \condn'
such that there are $C_0$, $C_1$, \ldots, $C_t\in C_\xi^{[d-\epsilon]}$
with the following conditions.
\begin{enumerate}
\item
$t=0$ or
$C_0\ni y_0$, $C_t\ni x_t$ and
$C_i\ni x_i$, $y_i$ for $1\leq i\leq t-1$,  and
\item
$\xi(z)=\epsilon$ for any $z\in C_i\cap(x_i,y_i)$ and for any $i$ with 
$0\leq i\leq t$, where we set $x_0=-\infty$ and $y_t=\infty$.
\end{enumerate}
% with
%$x_i$, $y_i\in C_i$ for $1\leq i\leq t-1$, $y_0\in C_0$ and $x_t\in C_t$ if $t>0$ and
%for any $i$ with $0\leq i\leq t$ and any $z\in C_i\cap(x_i,y_i)$, $\xi(z)=\epsilon$,
%where we set $x_0=-\infty$ and $y_t=\infty$.
\end{enumerate}
\end{prop}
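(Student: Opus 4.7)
The plan is to close the cycle (2) $\Rightarrow$ (1) $\Rightarrow$ (3) $\Rightarrow$ (2), with two of the three implications handed to us by the preceding lemmas and the third being essentially a matter of forgetting structure.

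First I would obtain (2) $\Rightarrow$ (1) as an immediate citation of Lemma \ref{lem:gen suf}: the hypotheses of condition (2) (the zigzag $z_0 > w_1 < \cdots < z_{s-1} > w_s$, the chains $C_0, \ldots, C_s \in C_\xi^{[d-\epsilon]}$, and the fact that $\xi$ equals $\epsilon$ on each open interval $C_i \cap (w_i, z_i)$) are precisely the hypotheses of that lemma, whose conclusion is exactly that $T^\xi$ is a generator of $\omegae$.

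Next I would obtain (1) $\Rightarrow$ (3) as an immediate citation of Lemma \ref{lem:gen nec}, which takes a generator $T^\xi$ of $\omegae$ and manufactures a \qered\ sequence $y_0, x_1, \ldots, y_{t-1}, x_t$ with \condn' together with chains $C_0, \ldots, C_t \in C_\xi^{[d-\epsilon]}$ satisfying exactly the two bulleted conditions listed in (3).

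Finally, for (3) $\Rightarrow$ (2), I would just observe that any sequence $y_0, x_1, \ldots, y_{t-1}, x_t$ satisfying \condn' already satisfies the zigzag inequality $y_0 > x_1 < y_1 > \cdots < y_{t-1} > x_t$ by clause (1) of Definition \ref{def:condn}. Setting $s := t$, $z_i := y_i$, $w_i := x_i$ and keeping the same chains $C_i$ gives the data required by (2); the \qered\ property and clause (2) of \condn' are simply discarded.

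The real content lives in the two preceding lemmas, so the only obstacle here is conceptual rather than technical: one must recognise that (3) is strictly stronger than (2) (it demands \condn' and \qered\ on top of the zigzag), and so the nontrivial direction of the equivalence is (1) $\Rightarrow$ (3), which is precisely the work already carried out via the inductive construction of the sets $X_i, Y_i$ and the antichain $A = \max Y$ in Lemma \ref{lem:gen nec}. This proposition itself is a packaging statement whose proof is therefore a short three-line argument.
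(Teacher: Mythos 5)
Your proposal is correct and coincides with the paper's own (one-line) proof: the paper derives the proposition directly from Lemmas \ref{lem:gen suf} and \ref{lem:gen nec}, exactly the cycle (2)$\Rightarrow$(1)$\Rightarrow$(3)$\Rightarrow$(2) you describe, with (3)$\Rightarrow$(2) being the trivial weakening.
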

As a corollary, we obtain an upper bound of the degrees of generators of $\omegae$.

\begin{cor}
\mylabel{cor:gen deg max}
Let $T^\xi$, $\xi\in\se$, be a generator of $\omegae$.
Then
$$
\deg T^\xi\leq\max\left\{\qe(x_0,y_0,\ldots,x_t,y_t)\left|
\vcenter{\hsize=.4\textwidth\relax
\noindent
$y_0$, $x_1$, \ldots, $y_{t-1}$, $x_t$ is a \qered\ \scn',
where we set $x_0\define-\infty$ and $y_t\define\infty$.}
\right.\right\}.
$$
(See Definition \ref{def:qered} for the notation.)
\end{cor}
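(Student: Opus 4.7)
The plan is to invoke Proposition \ref{prop:gen equiv} \ref{item:qered}, which supplies a \qered\ \scn' $y_0,x_1,\ldots,y_{t-1},x_t$ together with adapted maximal chains $C_0,\ldots,C_t\in C_\xi^{[d-\epsilon]}$, where $d:=\xi(-\infty)$. Setting $x_0:=-\infty$ and $y_t:=\infty$, the corollary follows once we establish $d\leq\qe(x_0,y_0,\ldots,x_t,y_t)$ for this particular sequence, since this quantity lies in the set over which the maximum is taken.

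The argument rests on two families of constraints. The first is read off from the chains $C_i$ themselves. Partition each $C_i$ into $C_i\cap(-\infty,x_i]$, $C_i\cap(x_i,y_i)$, and $C_i\cap[y_i,\infty)$, and denote the respective $\xip$-values by $A_i$, $M_i$, $B_i$, with the boundary conventions $A_0=B_t=0$. The identity $\xip(C_i)=d-\epsilon$ gives $A_i+M_i+B_i=d-\epsilon$, so on summing
$$
\sum_{i=1}^t A_i+\sum_{i=0}^{t-1}B_i=(t+1)(d-\epsilon)-\sum_{i=0}^t M_i.
$$
Moreover, since each $C_i$ (after adjoining $\pm\infty$ when needed) restricts to a saturated chain from $x_i$ to $y_i$ in $P^\pm$, the uniform estimate $M_i\leq\qedist(x_i,y_i)-\epsilon$ holds for both $\epsilon=\pm1$.

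The second family comes from the splicing construction already exploited in the proof of Lemma \ref{lem:gen nec}. For each $0\leq i\leq t-1$, concatenate $C_{i+1}\cap(-\infty,x_{i+1}]$ with a saturated chain of length $s_i$ from $x_{i+1}$ to $y_i$ realising $\epsilon s_i=\qedist(x_{i+1},y_i)$, followed by $C_i\cap[y_i,\infty)$, to obtain a maximal chain $C'_i$ in $P$. Combining $\xip(C'_i)\leq d-\epsilon$ (from $\xi\in\se$) with $\xi\geq\epsilon$ on the inserted interior vertices yields
$$
A_{i+1}+B_i\leq d-\qedist(x_{i+1},y_i).
$$

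Summing these inequalities over $i=0,\ldots,t-1$ and feeding the result into the identity from the previous paragraph cancels $\sum A_i+\sum B_i$; applying the bound on $\sum M_i$ and observing that the $(t+1)\epsilon$ terms telescope, the inequality collapses precisely to $d\leq\qe(x_0,y_0,\ldots,x_t,y_t)$. The main subtlety I anticipate is the bookkeeping at the boundary indices $i=0,t$, where $x_0$ and $y_t$ fall outside $P$; this is resolved cleanly by the conventions $A_0=B_t=0$ and by the fact that every spliced chain connects a pair $x_{i+1},y_i$ with both endpoints already in $P$. Once this is pinned down, the remainder is linear algebra on the two families of constraints above.
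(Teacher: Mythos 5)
Your argument is correct and is essentially the paper's own proof: both invoke Proposition \ref{prop:gen equiv} \ref{item:qered}, bound each middle segment $C_i\cap(x_i,y_i)$ (where $\xi\equiv\epsilon$) by $\qedist(x_i,y_i)-\epsilon$, play this against the spliced maximal chains joining $x_{i+1}$ to $y_i$, and telescope. The only difference is presentational: you work directly with the partial sums $A_i$, $M_i$, $B_i$ of the given chains, whereas the paper routes the same two families of inequalities through $\xipdown(x_i)$, $\xippup(x_i)$ and Lemma \ref{lem:plus ineq}, which obliges it first to check that the $C_i$ realize those extremal values.
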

\begin{proof}
Set $d\define \deg T^\xi$.
Then by Proposition \ref{prop:gen equiv}, we see that there are a \qered\ sequence
$y_0$, $x_1$, \ldots, $y_{t-1}$, $x_t$ with \condn' and $C_0$, $C_1$, \ldots, 
$C_t\in C_\xi^{[d-\epsilon]}$ satisfying 
\ref{item:qered} of Proposition \ref{prop:gen equiv}.

If $t=0$, then since $\xi(c)=\epsilon$ for any $c\in C_0$, we see that
$$
d-\epsilon=\xip(C_0)=(\#C_0)\epsilon\leq\qedist(-\infty,\infty)-\epsilon.
$$
Therefore, $\deg T^\xi=d\leq\qedist(-\infty,\infty)=\qe(x_0,y_0)$.

Now suppose that $t>0$.
Since $\max\{\xip(C)\mid C$ is a maximal chain in $P\}=d-\epsilon$,
we see that
$
d-\epsilon\geq\xipdown(x_i)+\xippup(x_i)\geq
\sum_{c\in C_i\cap(-\infty,x_i]}\xi(c)+\sum_{c\in C_i\cap(x_i,\infty)}\xi(c)
=\xip(C_i)=d-\epsilon$.
Therefore,
$$
\xipdown(x_i)=\sum_{c\in C_i\cap(-\infty,x_i]}\xi(c)
\quad\mbox{and}\quad
\xippup(x_i)=\sum_{c\in C_i\cap(x_i,\infty)}\xi(c)
$$
for $1\leq i\leq t$.
We see by the same way that
$$
\xippdown(y_i)=\sum_{c\in C_i\cap(-\infty,y_i)}\xi(c)
\quad\mbox{and}\quad
\xipup(y_i)=\sum_{c\in C_i\cap[y_i,\infty)}\xi(c)
$$
for $0\leq i\leq t-1$.
In particular,
$$
\xippup(x_i)-\xipup(y_i)=\sum_{c\in C_i\cap(x_i,y_i)}\xi(c)=(\#(C_i\cap(x_i,y_i)))\epsilon
\leq \qedist(x_i,y_i)-\epsilon
$$
for $1\leq i\leq t-1$, since $\xi(c)=\epsilon$ for any $c\in C_i\cap(x_i,y_i)$.
Moreover, we see that
$$
\xippdown(y_0)\leq\qedist(-\infty,y_0)-\epsilon
\quad\mbox{and}\quad
\xippup(x_t)\leq\qedist(x_t,\infty)-\epsilon
$$
by the same way.
On the other hand, we see by Lemma \ref{lem:plus ineq},
$$
\xippup(x_{i+1})-\xipup(y_i)\geq\qedist(x_{i+1},y_i)-\epsilon
$$
for $0\leq i\leq t-1$.
Thus,
\begin{eqnarray*}
&&\qe(x_0,y_0,\ldots,x_t,y_t)\\
&=&\sum_{\ell=0}^t\qedist(x_\ell,y_\ell)-
\sum_{\ell=0}^{t-1}\qedist(x_{\ell+1},y_\ell)\\
&\geq&
\xippdown(y_0)+\epsilon+\sum_{\ell=1}^{t-1}(\xippup(x_\ell)-\xipup(y_\ell)+\epsilon)
+\xippup(x_t)+\epsilon\\
&&\qquad
-\sum_{\ell=0}^{t-1}(\xippup(x_{\ell+1})-\xipup(y_\ell)+\epsilon)
\\
&=&
\xippdown(y_0)+\xipup(y_0)+\epsilon\\
&=&
\xip(C_0)+\epsilon\\
&=&d.
\end{eqnarray*}
\end{proof}
In fact, this upper bound is the least upper bound, see Proposition \ref{prop:gen const}.
Further, it is 
%easily seen, by the same way as the proof of Proposition \ref{prop:in omega n},
seen, by Corollary \ref{cor:gen deg min} 
that the minimum degree of the 
Laurent monomials in $\omegae$ is
$\qedist(-\infty,\infty)$.

The \qered\ \scn' satisfying \ref{item:qered} of Proposition \ref{prop:gen equiv}
is far from unique.
However, for a given \qered\ sequence $y_0$, $x_1$, \ldots, $y_{t-1}$, $x_t$ with
\condn'
we can construct an element $\xi\in\se$ such
that 
$d=\xi(-\infty)=\qe(-\infty,y_0, \ldots, x_t,\infty)$
and
there are $C_0$, $C_1$, \ldots, $C_t\in C_\xi^{[d-\epsilon]}$ which satisfy
conditions of \ref{item:qered} of Proposition \ref{prop:gen equiv}.
% and $d\define\qe(x_0,y_0, \ldots, x_t,y_t)$.
In order to do this task, suppose that a \qered\ sequence
$y_0$, $x_1$, \ldots, $y_{t-1}$, $x_t$ with \condn' is given and fixed.
We set $x_0\define-\infty$, $y_t\define\infty$
and $d\define\qe(x_0,y_0,\ldots,x_t,y_t)$.

If $t=0$, then it is enough to set
$$
\xi(z)\define
\begin{cases}
\epsilon,&\mbox{if $z\in P$;}\\
d,&\mbox{if $z=-\infty$},
\end{cases}
$$
since 
$\xi\in\se$ by Lemma \ref{lem:an sn elem} and
there is a maximal chain $C_0$ in $P$ with 
$\xip(C_0)=\epsilon(\#C_0)=\qedist(-\infty,\infty)-\epsilon
=d-\epsilon$.
Thus, we assume that $t>0$ in the following.

We define two maps $\mu'\colon\{y_0,y_1,\ldots, y_t\}\to\ZZZ$
and $\mu''\colon\{x_0,x_1,\ldots, x_t\}\to\ZZZ$
by
$\mu''(x_i)\define\qe(x_i,y_i,\ldots,x_t,y_t)$
for $0\leq i\leq t$,
$\mu'(y_i)\define\qe(x_{i+1},y_{i+1},\ldots,x_t,y_t)-\qedist(x_{i+1},y_i)$
for $0\leq i\leq t-1$ and $\mu'(y_t)\define0$.
Note that 
$\mu''(x_0)=d$,
$\mu''(x_i)=\mu'(y_i)+\qedist(x_i,y_i)$ for $0\leq i\leq t$,
$\mu''(x_i)=\mu'(y_{i-1})+\qedist(x_{i},y_{i-1})$ for $1\leq i\leq t$
and
$\mu''(x_i)-\mu'(y_j)=\qe(x_i,y_i,\ldots,x_j,y_j)$ for $0\leq i\leq j\leq t$.

Moreover, if $i<j$ and $x_i<y_j$, then
$\qedist(x_i,y_j)<\mu''(x_i)-\mu'(y_j)$,
since $y_0$, $x_1$, \ldots, $y_{t-1}$, $x_t$ is \qered.
Note that it may happen that $x_i=y_j$ for some $i$ and $j$,
but this does not imply $\mu''(x_i)=\mu'(y_j)$.

\begin{example}
\rm
Let $\epsilon=1$ and let $P$ and $y_0$, $x_1$, $y_1$, $x_2$ be as follows.
$$
\begin{picture}(40,60)

\put(10,15){\circle*{2}}
\put(10,25){\circle*{2}}
\put(10,35){\circle*{2}}
\put(10,45){\circle*{2}}

\put(20,10){\circle*{2}}
\put(20,20){\circle*{2}}
\put(20,30){\circle*{2}}
\put(20,40){\circle*{2}}
\put(20,50){\circle*{2}}

\put(9,14){\makebox(0,0)[tr]{$x_1$}}
\put(9,46){\makebox(0,0)[br]{$y_1$}}
\put(21,30){\makebox(0,0)[l]{$y_0=x_2$}}

\put(10,15){\line(0,1){30}}
\put(20,10){\line(0,1){40}}
\put(10,15){\line(2,3){10}}
\put(10,45){\line(2,-3){10}}

\end{picture}
$$
Then $\mu'(y_2)=0$, $\mu''(x_2)=3$, $\mu'(y_1)=2$, $\mu''(x_1)=5$, $\mu'(y_0)=4$
and $\mu''(x_0)=7$.
\end{example}

We first define the map $\xi_0\colon P\to\ZZZ$ by
$$
\xi_0(z)\define
\begin{cases}
\mu'(z)-\max\{\mu'(y_j)+\qedist(z,y_j)\mid y_j>z\mbox{ in $P^+$}\}+\epsilon,&
\mbox{if $z=y_i$ for some $i$,}\\
\epsilon,&\mbox{otherwise.}
\end{cases}
$$
Note that $y_j$ may be $y_t=\infty$ in the above definition,
thus $\{\mu'(y_j)+\qedist(z,y_j)\mid y_j>z$ in $P^+\}\neq\emptyset$,
since $z\in P$.
For $\xi_0$, we see the following fact.

\begin{lemma}
\mylabel{lem:xi0}
\begin{enumerate}
\item
\mylabel{item:gee}
$\xi_0(z)\geq\epsilon$ for any $z\in P$.
\item
\mylabel{item:eqe}
$\xi_0(z)=\epsilon$ if $z\not\in\{y_0,\ldots, y_{t-1}\}$.
\item
\mylabel{item:eqpup}
$\xipup_0(y_i)=\mu'(y_i)$ for $0\leq i\leq t-1$.
\item
\mylabel{item:xip}
$\xip_0(C)\leq d-\epsilon$ for any maximal chain $C$ in $P$.
\item
\mylabel{item:x0y0}
$\qedist(x_0,y_0)+\xipup_0(y_0)= d$.
\end{enumerate}
\end{lemma}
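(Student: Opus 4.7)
Part \ref{item:eqe} is immediate from the definition of $\xi_0$. For part \ref{item:gee}, when $z\notin\{y_0,\ldots,y_{t-1}\}$ the claim follows from \ref{item:eqe}, so the substance is to verify $\mu'(y_i)\geq\mu'(y_j)+\qedist(y_i,y_j)$ for every $y_j>y_i$ in $P^+$. By Lemma \ref{lem:cond np basic} such an index satisfies $j>i$. Using the identities $\mu'(y_i)=\mu''(x_{i+1})-\qedist(x_{i+1},y_i)$ and $\mu''(x_{i+1})-\mu'(y_j)=\qe(x_{i+1},y_{i+1},\ldots,x_j,y_j)$, the assertion reduces to
\[
\qe(x_{i+1},y_{i+1},\ldots,x_j,y_j)\geq\qedist(x_{i+1},y_i)+\qedist(y_i,y_j).
\]
When $j=i+1$ this is just subadditivity of $\qedist$ along $x_{i+1}<y_i<y_{i+1}$. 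When $j\geq i+2$, I apply the \qered\ condition to the indices $(i+1,j)$---permitted since $x_{i+1}<y_i<y_j$---to obtain $\qedist(x_{i+1},y_j)<\qe(x_{i+1},y_{i+1},\ldots,x_j,y_j)$, and then combine with the subadditivity $\qedist(x_{i+1},y_j)\geq\qedist(x_{i+1},y_i)+\qedist(y_i,y_j)$. The case $y_j=y_t=\infty$ is handled identically under the convention $\mu'(\infty)=0$.

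For parts \ref{item:eqpup}--\ref{item:x0y0} I introduce the auxiliary function
\[
G(z):=\max\{\mu'(y_j)+\qedist(z,y_j)\mid y_j\geq z\text{ in }P^+\},\quad z\in P,
\]
and prove $\xipup_0(z)=G(z)$ throughout $P$ by Noetherian induction on $z$. The base case---$z$ maximal in $P$---is a direct check, invoking part \ref{item:gee} when $z=y_i$. For the inductive step at a non-maximal $z$, Lemma \ref{lem:path} \ref{item:prime eq max} combined with the induction hypothesis yields $\xippup_0(z)=\max_{z'\covers z}G(z')$, and the key identity
\[
\epsilon+\max_{z'\covers z\text{ in }P}G(z')=\max_{y_j>z\text{ in }P^+}(\mu'(y_j)+\qedist(z,y_j))
\]
follows by swapping the two maxima after expanding $\qedist(z,y_j)=\max_{z'\covers z,\,z'\leq y_j}(\epsilon+\qedist(z',y_j))$. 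When $z\neq y_i$ one has $\xi_0(z)=\epsilon$ and $G(z)$ coincides with the right-hand side above, so the induction closes; when $z=y_i$, part \ref{item:gee} ensures $G(y_i)=\mu'(y_i)$, and the formula $\xi_0(y_i)=\mu'(y_i)-\max_{y_j>y_i}(\mu'(y_j)+\qedist(y_i,y_j))+\epsilon$ is tailored precisely so that $\xipup_0(y_i)=\mu'(y_i)$ drops out. This delivers part \ref{item:eqpup}.

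Parts \ref{item:xip} and \ref{item:x0y0} are then immediate consequences. For \ref{item:xip}, any maximal chain $C$ of $P$ satisfies $\xip_0(C)\leq\xipup_0(\min C)=G(\min C)$, so it suffices to bound $G(z)$ for $z\in\min P$. For $y_j\in\{y_0,\ldots,y_{t-1}\}$ with $y_j\geq z$, the identity $\mu'(y_j)=d-\qe(x_0,y_0,\ldots,x_j,y_j)$ together with the \qered\ condition yields $\mu'(y_j)\leq d-\qedist(-\infty,y_j)$, and subadditivity $\qedist(-\infty,y_j)\geq\qedist(-\infty,z)+\qedist(z,y_j)=\epsilon+\qedist(z,y_j)$ valid for $z\in\min P$ gives $\mu'(y_j)+\qedist(z,y_j)\leq d-\epsilon$; the case $y_j=\infty$ uses $\qedist(-\infty,\infty)\leq d$, itself a consequence of the \qered\ condition applied to $(0,t)$. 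Part \ref{item:x0y0} follows by unwinding the definitions to get $\mu'(y_0)=d-\qedist(x_0,y_0)$, whence $\qedist(x_0,y_0)+\xipup_0(y_0)=\qedist(x_0,y_0)+\mu'(y_0)=d$ using part \ref{item:eqpup}. The main technical obstacle is the inductive step of part \ref{item:eqpup}: verifying the cover-expansion identity for $\qedist$ and aligning the two branches in the definition of $\xi_0(z)$ with the presence or absence of the $y_j=z$ term in $G(z)$.
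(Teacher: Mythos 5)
Your proof is correct, and while parts \ref{item:gee}, \ref{item:eqe} and \ref{item:x0y0} track the paper closely, your treatment of \ref{item:eqpup} and \ref{item:xip} is organized quite differently. For \ref{item:gee} the paper applies the reducedness inequality to the pair $(x_i,y_j)$ together with $\qedist(x_i,y_i)+\qedist(y_i,y_j)\le\qedist(x_i,y_j)$, which covers all $j>i$ uniformly and avoids your case split at $j=i+1$; your variant via $(x_{i+1},y_j)$ is equally valid. The genuine divergence is that the paper proves $\xipup_0(y_i)=\mu'(y_i)$ by backward induction on the index $i$ alone, establishing $\le$ and $\ge$ separately at each $y_i$ (splitting an optimal chain at its first meeting with $\{y_0,\dots,y_{t-1}\}$ for one direction, and routing a chain through the maximizing $y_j$ for the other), and then proves \ref{item:xip} by a second, separate chain-splitting argument. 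You instead compute $\xipup_0(z)$ in closed form as $G(z)=\max\{\mu'(y_j)+\qedist(z,y_j)\mid y_j\ge z\text{ in }P^+\}$ for \emph{every} $z\in P$ by Noetherian induction, from which \ref{item:eqpup} and \ref{item:xip} both fall out at once. The pieces you need all check out: for non-maximal $z$ the covers of $z$ in $P$ and in $P^+$ coincide, so the expansion $\qedist(z,y_j)=\max\{\epsilon+\qedist(z',y_j)\mid z'\covers z,\ z'\le y_j\}$ and the max-swap are legitimate (the index sets $\{(y_j,z'):y_j>z,\ z'\covers z,\ z'\le y_j\}$ and $\{(z',y_j):z'\covers z,\ y_j\ge z'\}$ agree); and part \ref{item:gee} is exactly what guarantees $G(y_i)=\mu'(y_i)$, i.e.\ that the $y_j=z$ term dominates, so the induction closes in both branches of the definition of $\xi_0$. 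The global formula costs you the verification of that cover-expansion identity but buys a single exact equality in place of two one-sided estimates, a uniform derivation of \ref{item:xip} from $G$ on $\min P$, and strictly more information than the lemma asserts.
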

\begin{proof}
\ref{item:eqe} is obvious from the definition.
We next prove \ref{item:gee}.
The case where $z\not\in\{y_0,\ldots, y_t\}$ follows from \ref{item:eqe}.
Assume that $z=y_i$ and $0\leq i\leq t-1$.
Take $j$ with $y_j>y_i$ and $\xi_0(y_i)=\mu'(y_i)-\big(\mu'(y_j)+\qedist(y_i,y_j)\big)+\epsilon$.
Then $i<j$ by Lemma \ref{lem:cond np basic} and $x_i<y_j$.
Since $y_0$, $x_1$, \ldots, $y_{t-1}$, $x_t$ is \qered\ \scn',
we see that
$$
\qedist(x_i,y_j)<\mu''(x_i)-\mu'(y_j)=\qedist(x_i,y_i)+\mu'(y_i)-\mu'(y_j).
$$
On the other hand, since
$$
\qedist(x_i,y_i)+\qedist(y_i,y_j)\leq\qedist(x_i,y_j),
$$
we see that
$$
\qedist(y_i,y_j)<\mu'(y_i)-\mu'(y_j).
$$
Therefore, 
$$
\xi_0(y_i)=\mu'(y_i)-\big(\mu'(y_j)+\qedist(y_i,y_j)\big)+\epsilon
>\epsilon.
$$

Next we prove \ref{item:eqpup} by backward 
%(Noetherian) 
induction on $i$.
First consider the case where $i=t-1$.
Since $y_0$, $x_1$, \ldots, $y_{t-1}$, $x_t$ satisfies \condn', we see that
$y_j>y_{t-1}$ implies $j=t$ by Lemma \ref{lem:cond np basic}.
Therefore,
\begin{eqnarray*}
\xi_0(y_{t-1})&=&\mu'(y_{t-1})-\big(\mu'(y_t)+\qedist(y_{t-1},y_t)\big)+\epsilon\\
&=&\mu'(y_{t-1})-\qedist(y_{t-1},y_t)+\epsilon.
\end{eqnarray*}
On the other hand, since
$z\in P$, $z>y_{t-1}$ implies $z\not\in\{y_0,\ldots, y_{t-1}\}$ 
by Lemma \ref{lem:cond np basic}, we see that
$$
\xipup_0(y_{t-1})=\xi_0(y_{t-1})+\qedist(y_{t-1},y_t)-\epsilon.
$$
Therefore, we see that
$$
\xipup_0(y_{t-1})=\mu'(y_{t-1}).
$$
Next suppose that $0\leq i\leq t-2$.
We first show that $\xipup_0(y_i)\leq\mu'(y_i)$.
Take $z_0$, $z_1$, \ldots, $z_s$ with 
$y_i=z_0\covered z_1\covered\cdots\covered z_s\covered\infty$
and
$\sum_{\ell=0}^s\xi_0(z_\ell)=\xipup(y_i)$.
First consider the case where 
$\{z_1$, \ldots, $z_s\}\cap\{y_0$, \ldots, $y_{t-1}\}=\emptyset$.
Since $\xi_0(z_\ell)=\epsilon$ for $1\leq \ell\leq s$, we see that
\begin{eqnarray*}
\xipup_0(y_i)&=&\xi_0(y_i)+\sum_{\ell=1}^s\xi_0(z_\ell)\\
&=&\xi_0(y_i)+(s+1)\epsilon-\epsilon\\
&\leq&\xi_0(y_i)+\qedist(y_i,\infty)-\epsilon\\
&=&\mu'(y_i)-\max\{\mu'(y_j)+\qedist(y_i,y_j)\mid y_j>y_i\}+\epsilon\\
&&\qquad+\qedist(y_i,y_t)-\epsilon\\
&\leq&\mu'(y_i).
\end{eqnarray*}
If $\{z_1, \ldots, z_s\}\cap\{y_0, \ldots, y_{t-1}\}\neq\emptyset$,
take minimal $u$ with $u\geq 1$ and $z_u\in\{y_0, \ldots, y_{t-1}\}$
and set $z_u=y_j$.
Then $j>i$ by Lemma \ref{lem:cond np basic}.
Thus, $\xipup_0(y_j)=\mu'(y_j)$ by the induction hypothesis.
Moreover, 
$\xipup_0(y_j)=\sum_{\ell=u}^s\xi_0(z_\ell)$ by Lemma \ref{lem:path} \ref{item:plus path}.
On the other hand, since $\xi_0(z_\ell)=\epsilon$ for $1\leq \ell\leq u-1$,
we see that 
$$\sum_{\ell=1}^{u-1}\xi_0(z_\ell)=u\epsilon -\epsilon\leq\qedist(y_i,y_j)-\epsilon.
$$
Therefore,
\begin{eqnarray*}
\sum_{\ell=1}^s\xi_0(z_\ell)
&\leq&\qedist(y_i,y_j)-\epsilon+\xipup_0(y_j)\\
&=&\mu'(y_j)+\qedist(y_i,y_j)-\epsilon.
\end{eqnarray*}
Thus,
\begin{eqnarray*}
\xipup_0(y_i)
&=&\xi_0(y_i)+\sum_{\ell=1}^s\xi_0(z_\ell)\\
&\leq&\mu'(y_i)-\max\{\mu'(y_{\jmath'})+\qedist(y_i,y_{\jmath'})\mid y_{\jmath'}>y_i\}+\epsilon\\
&&\qquad+\mu'(y_j)+\qedist(y_i,y_j)-\epsilon\\
&\leq&\mu'(y_i).
\end{eqnarray*}

Next we prove that $\xipup_0(y_i)\geq\mu'(y_i)$.
Take $j$ with $y_j>y_i$ and 
\begin{equation}
\xi_0(y_i)=\mu'(y_i)-\big(\mu'(y_j)+\qedist(y_i,y_j)\big)+\epsilon.
\mylabel{eq:xi yi def}
\end{equation}
Then $j>i$ by Lemma \ref{lem:cond np basic}.
Take $z_0$, \ldots, $z_u\in P^+$ with
$y_i=z_0\covered\cdots\covered z_u=y_j$
and $u\epsilon=\qedist(y_i,y_j)$.
Then by Lemma \ref{lem:path} \ref{item:chain path}
and the induction hypothesis
$$
\xipup_0(y_i)
\geq\sum_{\ell=0}^{u-1}\xi_0(z_\ell)+\xipup_0(y_j)
=\sum_{\ell=0}^{u-1}\xi_0(z_\ell)+\mu'(y_j),
$$
where we set $\xipup_0(y_t)\define0$.
Since $\xi_0(z_\ell)\geq\epsilon$ for $1\leq \ell\leq u-1$ by \ref{item:gee}, 
we see that
\begin{eqnarray*}
\xipup_0(y_i)
&\geq&\xi_0(y_i)+u\epsilon-\epsilon +\mu'(y_j)\\
&=&\xi_0(y_i)+\qedist(y_i,y_j)-\epsilon+\mu'(y_j)\\
&=&\mu'(y_i)
\end{eqnarray*}
by \refeq{eq:xi yi def}.
Thus, we have proved \ref{item:eqpup}.

\ref{item:x0y0} follows from \ref{item:eqpup} and the fact 
$ d=\mu''(x_0)=\mu'(y_0)+\qedist(x_0,y_0)$.

Finally, we prove \ref{item:xip}.
Let $C$ be a maximal chain in $P$.
We set $C=\{c_1,\ldots, c_s\}$,
$-\infty\covered c_1\covered\cdots\covered c_s\covered\infty$.
First, consider the case where $C\cap\{y_0,\ldots, y_{t-1}\}=\emptyset$.
Then $\xi_0(c_i)=\epsilon$ for any $i$ by \ref{item:eqe}.
Thus,
%\begin{eqnarray*}
$$
\xip_0(C)
=s\epsilon
=(s+1)\epsilon-\epsilon
\leq\qedist(-\infty,\infty)-\epsilon.
$$
%\end{eqnarray*}
Since $y_0$, $x_1$, \ldots $y_{t-1}$, $x_t$ is \qered, we see that
$$
%\qedist(-\infty,\infty)-\epsilon=\qedist(x_0,y_t)-\epsilon\leq d-\epsilon.
\qedist(-\infty,\infty)=\qedist(x_0,y_t)< \qe(x_0,y_0,\ldots,x_t,y_t)=d.
$$
%(equality holds if and only if $t=0$).
Therefore,
$$
\xip_0(C)\leq d-\epsilon.
$$
Next we consider the case where $C\cap\{y_0,\ldots, y_{t-1}\}\neq\emptyset$.
Take minimal $u$ with $c_u\in\{y_0, \ldots, y_{t-1}\}$ and set $c_u=y_j$.
Then 
$$
\xip_0(C)=\sum_{\ell=1}^{u-1}\xi_0(c_\ell)+\sum_{\ell=u}^s\xi_0(c_\ell).
$$
Since $\xi_0(c_\ell)=\epsilon$ for $1\leq \ell\leq u-1$,
we see that
$$
\sum_{\ell=1}^{u-1}\xi_0(c_\ell)=u\epsilon-\epsilon\leq\qedist(x_0,y_j)-\epsilon
\leq\mu''(x_0)-\mu'(y_j)-\epsilon
=d-\mu'(y_j)-\epsilon,
$$
since $y_0$, $x_1$, \ldots, $y_{t-1}$, $x_t$ is \qered.
On the other hand, since 
$\xipup_0(y_j)\geq\sum_{\ell=u}^s\xi_0(c_\ell)$, we see that
$$
\xip(C)\leq d-\mu'(y_j)-\epsilon+\xipup_0(y_j)
%= \mu''(x_0)-\epsilon
= d-\epsilon
$$
by \ref{item:eqpup}.
\end{proof}

Next we state a lemma which is used in the induction argument.

\begin{lemma}
\mylabel{lem:ind lemma}
Let $k$ be an integer with $0\leq k\leq t-1$ and suppose that a map
$\xi_k\colon P\to\ZZZ$ is defined so that
\begin{trivlist}
\item[\hskip\labelsep\bf(H1)]
$\xi_k(z)\geq\epsilon$ for any $z\in P$.
\item[\hskip\labelsep\bf(H2)]
$\xi_k(z)=\epsilon$ if $z\not\in\{x_1, \ldots, x_k,y_1,\ldots, y_{t-1}\}$.
\item[\hskip\labelsep\bf(H3)]
$\xipup_k(y_i)=\mu'(y_i)$ for $0\leq i\leq t-1$.
\item[\hskip\labelsep\bf(H4)]
$\xip_k(C)\leq d-\epsilon$ for any maximal chain $C$ in $P$.
\item[\hskip\labelsep\bf(H5)]
$\xipdown_k(x_i)+\qedist(x_i,y_i)+\xipup_k(y_i)= d$
for $0\leq i\leq k$.
\end{trivlist}
are satisfied, where we set $\xipdown_k(x_0)=0$.

Let $\xi_{k+1}\colon P\to\ZZZ$ be a map such that
$$
\xi_{k+1}(z)\define
\begin{cases}
\xi_k(z),&\mbox{if $z\neq x_{k+1}$;}\\
 d-\epsilon-\xippdown_{k}(x_{k+1})-\xippup_k(x_{k+1}),&
\mbox{if $z=x_{k+1}$.}
\end{cases}
$$
Then
\begin{enumerate}
\item
\mylabel{item:geek}
$\xi_{k+1}(z)\geq\xi_k(z)$ for any $z\in P$.
In particular,
$\xi_{k+1}(z)\geq\epsilon$ for any $z\in P$.
\item
\mylabel{item:eqek}
$\xi_{k+1}(z)=\epsilon$ if $z\not\in\{x_1, \ldots, x_{k+1}, y_0,\ldots, y_{t-1}\}$.
\item
\mylabel{item:eqpupk}
$\xipup_{k+1}(y_i)=\mu'(y_i)$ for $0\leq i\leq t-1$.
\item
\mylabel{item:xipk}
$\xip_{k+1}(C)\leq d-\epsilon$ for any maximal chain $C$ in $P$.
\item
\mylabel{item:x0y0k}
$\xipdown_{k+1}(x_i)+\qedist(x_i,y_i)+\xipup_{k+1}(y_i)= d$
for $0\leq i\leq k+1$,
where we set $\xipdown_{k+1}(x_0)=\xipup_{k+1}(y_t)=0$.
\end{enumerate}
\end{lemma}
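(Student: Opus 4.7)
The plan is to verify (1)--(5) in the order listed, using throughout that $\xi_{k+1}$ agrees with $\xi_k$ off $\{x_{k+1}\}$. Assertion (1) follows immediately by applying (H4) to a maximal chain through $x_{k+1}$: the first inequality of Lemma \ref{lem:plus ineq} gives $\xi_k(x_{k+1})+\xippdown_k(x_{k+1})+\xippup_k(x_{k+1})\leq d-\epsilon$, which rearranges to $\xi_k(x_{k+1})\leq\xi_{k+1}(x_{k+1})$. Assertion (2) is immediate from (H2) and the definition of $\xi_{k+1}$. For (4), chains $C\not\ni x_{k+1}$ inherit the bound from (H4), while if $x_{k+1}\in C$ then $\xip_{k+1}(C)\leq\xippdown_k(x_{k+1})+\xi_{k+1}(x_{k+1})+\xippup_k(x_{k+1})=d-\epsilon$ by the very definition of $\xi_{k+1}(x_{k+1})$.

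For (3), the bound $\xipup_{k+1}(y_i)\geq\mu'(y_i)$ follows from (1) and (H3). The case $i\geq k$ is painless: by Lemma \ref{lem:cond np basic}, $y_i\leq x_{k+1}$ would force $k+1\geq i+2$, hence $y_i\not\leq x_{k+1}$, so no upward chain from $y_i$ meets $x_{k+1}$ and $\xipup_{k+1}(y_i)=\xipup_k(y_i)=\mu'(y_i)$. When $i<k$ I would first establish the auxiliary equality
\begin{equation*}
\xipup_k(y_i)+\xippdown_k(y_i)=d-\epsilon,
\end{equation*}
by rewriting (H5) as $\xipdown_k(x_i)=d-\qedist(x_i,y_i)-\mu'(y_i)$, applying Lemma \ref{lem:path} (3) to a saturated $x_i$-to-$y_i$ chain of $\epsilon$-length $\qedist(x_i,y_i)$ (using $\xi_k\geq\epsilon$ on the intermediate elements), and using (H4) for the reverse bound; a minor direct variant handles $i=0$, where $x_0=-\infty$. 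Granted this, an upward chain $y_i=z_0\covered\cdots\covered z_j=x_{k+1}\covered\cdots\covered z_s\covered+\infty$ through $x_{k+1}$ satisfies $\sum_{\ell=0}^{j-1}\xi_k(z_\ell)\leq\xippdown_k(x_{k+1})-\xippdown_k(y_i)$ (extend the downward piece through $y_i$ by a chain realizing $\xippdown_k(y_i)$), and substituting the definition of $\xi_{k+1}(x_{k+1})$ collapses the whole sum to $d-\epsilon-\xippdown_k(y_i)=\mu'(y_i)$; chains avoiding $x_{k+1}$ are controlled directly by (H3).

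For (5) with $i\leq k$, Lemma \ref{lem:cond np basic} gives $x_{k+1}\not\leq x_i$, so $\xipdown_{k+1}(x_i)=\xipdown_k(x_i)$; combined with $\xipup_{k+1}(y_i)=\mu'(y_i)$ from (3) and (H5), this yields the claimed identity (the case $i=0$ uses $\mu''(x_0)=d$ directly). For $i=k+1$, a direct computation gives $\xipdown_{k+1}(x_{k+1})=d-\epsilon-\xippup_k(x_{k+1})$, so from $\mu''(x_{k+1})=\qedist(x_{k+1},y_{k+1})+\mu'(y_{k+1})$ the assertion reduces to
\begin{equation*}
\xippup_k(x_{k+1})=\mu''(x_{k+1})-\epsilon.
\end{equation*}
The lower bound comes from Lemma \ref{lem:path} (3) applied to a saturated $x_{k+1}$-to-$y_k$ chain whose intermediate vertices, by Lemma \ref{lem:cond np basic}, all lie outside $\{x_1,\ldots,x_k,y_0,\ldots,y_{t-1}\}$ and hence carry $\xi_k=\epsilon$. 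For the upper bound, any upward chain from $x_{k+1}$ either avoids $\{y_0,\ldots,y_{t-1}\}$ (so the sum is $\leq\qedist(x_{k+1},+\infty)-\epsilon\leq\mu''(x_{k+1})-\epsilon$ by \qered\ with $i=k+1,j=t$, reading equality when $k+1=t$) or first meets some $y_{j_0}$ with $j_0\geq k$, in which case the sum is bounded by $\qedist(x_{k+1},y_{j_0})-\epsilon+\mu'(y_{j_0})\leq\mu''(x_{k+1})-\epsilon$: this is an equality via the definition of $\mu''$ when $j_0\in\{k,k+1\}$ and is strict by the \qered\ condition when $j_0>k+1$.

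The main obstacle is isolating the two tight identities $\xipup_k(y_i)+\xippdown_k(y_i)=d-\epsilon$ and $\xippup_k(x_{k+1})=\mu''(x_{k+1})-\epsilon$: each pins the otherwise slack inequality of (H4) down to equality on chains through a designated vertex, and each requires one to interlace (H5), Lemma \ref{lem:path}, and the \qered\ hypothesis on the sequence $y_0,x_1,\ldots,y_{t-1},x_t$.
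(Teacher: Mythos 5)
Your proof is correct and follows essentially the same route as the paper's: the same order of verification of (1)--(5), the same reliance on Lemmas \ref{lem:path}, \ref{lem:plus ineq} and \ref{lem:cond np basic} together with the \qered\ hypothesis, and the same two tight identities (forcing equality in (H4) on chains through $y_i$, resp.\ computing $\xippup_k(x_{k+1})=\mu''(x_{k+1})-\epsilon$). Your only deviations are minor streamlinings — disposing of $i\geq k$ in (3) by noting $y_i\not\leq x_{k+1}$, and replacing the paper's contradiction argument for $j\geq k+2$ in (5) by observing that the \qered\ inequality only sharpens the bound — which do not change the approach.
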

\begin{proof}
First note that we have to be careful because that it might happen
$x_{k+1}\in \{y_0, \ldots, y_{t-1}\}$.

\ref{item:eqek} is obvious from (H2) and the definition of $\xi_{k+1}$.
Next we prove \ref{item:geek}.
Since
$$
\xippdown_k(x_{k+1})+\xi_k(x_{k+1})+\xippup_k(x_{k+1})\leq  d-\epsilon
$$
by (H4) and Lemma \ref{lem:plus ineq}, we see that
$$
\xi_{k+1}(x_{k+1})
= d-\epsilon-\xippdown_k(x_{k+1})-\xippup_k(x_{k+1})
\geq \xi_k(x_{k+1})
$$
by (H1).
Therefore \ref{item:geek} follows from the definition of $\xi_{k+1}$.
%Note that we have also proved that $\xi_{k+1}(z)\geq\xi_k(z)$ for any $z\in P$.

Next we prove \ref{item:xipk}.
Let $C$ be an arbitrary maximal chain in $P$.
If $C\not\ni x_{k+1}$, then by the definition of $\xi_{k+1}$, we see that
$$
\xip_{k+1}(C)=\xip_k(C)\leq d-\epsilon
$$
by (H4).
If $C\ni x_{k+1}$, then
\begin{eqnarray*}
\xip_{k+1}(C)
&\leq&\xippdown_{k+1}(x_{k+1})+\xi_{k+1}(x_{k+1})+\xippup_{k+1}(x_{k+1})\\
&=&\xippdown_{k}(x_{k+1})
+ (d-\epsilon-\xippdown_k(x_{k+1})-\xippup_k(x_{k+1}))+\xippup_k(x_{k+1})\\
&=& d-\epsilon
\end{eqnarray*}
by the definition of $\xi_{k+1}$.

Now we prove \ref{item:eqpupk}.
First, 
%since $\xi_{k+1}(z)\geq\xi_k(z)$ for any $z\in P$, 
we see by \ref{item:geek} and (H3) that
$$
\xipup_{k+1}(y_i)\geq\xipup_k(y_i)=\mu'(y_i).
$$
In order to prove the converse inequality, take a saturated chain
$y_i=z_0\covered\cdots\covered z_s\covered\infty$ from $y_i$ to $\infty$
with
$\xipup_{k+1}(y_i)=\sum_{\ell=0}^s\xi_{k+1}(z_\ell)$.
If $x_{k+1}\not\in\{z_0,\ldots, z_{s}\}$, then
$$
\xipup_{k+1}(y_i)=\sum_{\ell=0}^s\xi_{k+1}(z_\ell)=\sum_{\ell=0}^s\xi_k(z_\ell)
\leq\xipup_k(y_i)=\mu'(y_i)
$$
by (H3) and the definition of $\xi_{k+1}$.
Now suppose that $x_{k+1}\in\{z_0, \ldots, z_s\}$ and set $x_{k+1}=z_u$.
Then, since $y_k>x_{k+1}\geq y_i>x_{i+1}$, we see by \condn' that
$k\not\leq (i+1)-2$, i.e., $k\geq i$.
(In fact, $i\leq k-1$, but we do not use this fact.)
Since $\xi_k(z)\geq\epsilon$ for any $z\in P$, we see by Lemma \ref{lem:plus ineq} that
$$
\xippdown_k(y_i)\geq\qedist(x_i,y_i)-\epsilon+\xipdown_k(x_i).
$$
Therefore, by (H5), \ref{item:geek} and \ref{item:xipk}, we see that
\begin{eqnarray*}
&& d-\epsilon\\
&=&\xipup_k(y_i)+\qedist(x_i,y_i)+\xipdown_k(x_i)-\epsilon\\
&\leq&\xipup_k(y_i)+\xippdown_k(y_i)\\
&\leq&\xipup_{k+1}(y_i)+\xippdown_{k+1}(y_i)\\
&\leq& d-\epsilon.
\end{eqnarray*}
Thus, we see that 
$$
\xipup_{k+1}(y_i)=\xipup_k(y_i)=\mu'(y_i).
$$

Finally, we prove \ref{item:x0y0k}.
By \ref{item:xipk} and Lemma \ref{lem:plus ineq}, we see that
$$
\xipup_{k+1}(y_i)+\qedist(x_i,y_i)-\epsilon+\xipdown_{k+1}(x_i)
\leq  d-\epsilon.
$$
Thus, 
$$
\xipup_{k+1}(y_i)+\qedist(x_i,y_i)+\xipdown_{k+1}(x_i)\leq d
$$
for any $i$.
If $ i\leq k$, then by (H5), we see that 
\begin{eqnarray*}
&&\xipup_{k+1}(y_i)+\qedist(x_i,y_i)+\xipdown_{k+1}(x_i)\\
&\geq&\xipup_k(y_i)+\qedist(x_i,y_i)+\xipdown_k(x_i)\\
&=& d.
\end{eqnarray*}
In order to prove the case where $i=k+1$, take a saturated chain
$x_{k+1}=z_0\covered\cdots \covered z_s\covered z_{s+1}=\infty$
with
$\xipup_{k+1}(x_{k+1})=\sum_{\ell=0}^s\xi_{k+1}(z_\ell)$.
%Then $\{x_1, \ldots, x_k\}\cap\{z_1, \ldots, z_s\}=\emptyset$ by \condn'.
Take minimal $u$ with $1\leq u\leq s+1$ and $z_u\in\{y_0, \ldots, y_t\}$
and set $z_u=y_j$.
Then
\begin{eqnarray}
\xippup_{k+1}(x_{k+1})
&=&(u-1)\epsilon+\sum_{\ell=u}^s\xi_{k+1}(z_\ell)
\nonumber\\
&\leq&\qedist(x_{k+1},y_j)-\epsilon+\xipup_{k+1}(y_j)
\nonumber\\
&=&\qedist(x_{k+1},y_j)-\epsilon+\mu'(y_j)
\mylabel{eq:qe mp}
\end{eqnarray}
by \ref{item:eqpupk} and Lemma \ref{lem:path} \ref{item:plus path}.
We claim here that $j=k$ or $j=k+1$.
In fact, by \condn', we see that $j\geq k$.
If $j\geq k+2$, then, since $y_0$, $x_1$, \ldots, $y_{t-1}$, $x_t$ is \qered, 
we see that
$$
\qedist(x_{k+1},y_j)<\mu''(x_{k+1})-\mu'(y_j).
%=\mu'(y_{k+1})+\qedist(x_{k+1},y_{k+1})-\mu'(y_j).
$$
Thus,
by \refeq{eq:qe mp}, \ref{item:eqpupk} and Lemma \ref{lem:plus ineq}, we see that
\begin{eqnarray*}
\xippup_{k+1}(x_{k+1})
&<&\mu''(x_{k+1})-\epsilon\\
&=&\qedist(x_{k+1},y_{k+1})+\mu'(y_{k+1})-\epsilon\\
&=&\qedist(x_{k+1},y_{k+1})-\epsilon+\xipup_{k+1}(y_{k+1})\\
&\leq&\xippup_{k+1}(x_{k+1}).
\end{eqnarray*}
This is a contradiction.
Therefore, $j=k$ or $j=k+1$ and
$$
\mu''(x_{k+1})=\qedist(x_{k+1},y_j)+\mu'(y_j).
$$
Therefore, we see by \refeq{eq:qe mp} that
\begin{eqnarray*}
\xippup_{k+1}(x_{k+1})
&\leq&\mu''(x_{k+1})-\epsilon\\
&=&\mu'(y_{k+1})+\qedist(x_{k+1},y_{k+1})-\epsilon\\
&=&\xipup_{k+1}(y_{k+1})+\qedist(x_{k+1},y_{k+1})-\epsilon
\end{eqnarray*}
by \ref{item:eqpupk}.
Further, since 
$\xi_{k+1}(z)\geq\epsilon$ for any $z$, we see that 
$\xippup_{k+1}(x_{k+1})\geq\xipup_{k+1}(y_{k+1})+\qedist(x_{k+1},y_{k+1})-\epsilon$
by Lemma \ref{lem:path} \ref{item:chain path}.
Thus, 
$$\xippup_{k+1}(x_{k+1})=\xipup_{k+1}(y_{k+1})+\qedist(x_{k+1},y_{k+1})-\epsilon.$$
Since
$$
\xipdown_{k+1}(x_{k+1})+\xippup_{k+1}(x_{k+1})= d-\epsilon
$$
by the definition of $\xi_{k+1}$, we see that
\begin{eqnarray*}
&&\xipdown_{k+1}(x_{k+1})+\qedist(x_{k+1},y_{k+1})-\epsilon+\xipup_{k+1}(y_{k+1})\\
&=&\xipdown_{k+1}(x_{k+1})+\xippup_{k+1}(x_{k+1})\\
&=& d-\epsilon
\end{eqnarray*}
and therefore,
$$
\xipdown_{k+1}(x_{k+1})+\qedist(x_{k+1},y_{k+1})+\xipup_{k+1}(y_{k+1})= d.
$$
\end{proof}

By Lemmas \ref{lem:xi0} and \ref{lem:ind lemma} and induction,
we see that $\xi_t$ is defined and satisfies
\begin{enumerate}
\item
\mylabel{item:geet}
$\xi_t(z)\geq \epsilon$ for any $z\in P$,
\item
\mylabel{item:eqet}
$\xi_{t}(z)=\epsilon$ if $z\not\in\{x_1, \ldots, x_{t}, y_0,\ldots, y_{t-1}\}$.
\item
\mylabel{item:xipt}
$\xip_t(C)\leq d-\epsilon$ for any maximal chain $C$ in $P$ and
\item
\mylabel{item:x0y0t}
$\xipdown_t(x_i)+\qedist(x_i,y_i)+\xipup_t(y_i)= d$
for $0\leq i\leq t$,
where we set $\xipdown_t(x_0)=\xipup_t(y_t)=0$.
\end{enumerate}

\begin{example}
\rm
Let $\epsilon=1$ and let $P$ and $y_0$, $x_1$, $y_1$, $x_2$ be as follows.
$$
\begin{picture}(60,40)

\put(10,10){\circle*{2}}
\put(10,30){\circle*{2}}
\put(30,10){\circle*{2}}
\put(30,30){\circle*{2}}
\put(50,10){\circle*{2}}
\put(50,30){\circle*{2}}

\put(10,20){\circle*{2}}
\put(30,20){\circle*{2}}
\put(50,20){\circle*{2}}

\put(10,10){\line(0,1){20}}
\put(10,30){\line(1,-1){20}}
\put(30,10){\line(0,1){20}}
\put(30,30){\line(1,-1){20}}
\put(50,10){\line(0,1){20}}

\put(9,31){\makebox(0,0)[br]{$y_0$}}
%\put(9,9){\makebox(0,0)[tr]{$x_i$}}
\put(30,9){\makebox(0,0)[t]{$x_{1}$}}
\put(30,31){\makebox(0,0)[b]{$y_{1}$}}
\put(51,9){\makebox(0,0)[tl]{$x_{2}$}}
%\put(51,31){\makebox(0,0)[bl]{$y_{i+2}$}}

\end{picture}
$$
Then $ d=6$,
{\unitlength=.7\unitlength\relax
$$
\xi_0=
\vcenter{\hsize=60\unitlength
%\xi_0=
\begin{picture}(40,40)

\put(0,10){\circle*{3}}
\put(0,30){\circle*{3}}
\put(20,10){\circle*{3}}
\put(20,30){\circle*{3}}
\put(40,10){\circle*{3}}
\put(40,30){\circle*{3}}

\put(0,20){\circle*{3}}
\put(20,20){\circle*{3}}
\put(40,20){\circle*{3}}

\put(0,10){\line(0,1){20}}
\put(0,30){\line(1,-1){20}}
\put(20,10){\line(0,1){20}}
\put(20,30){\line(1,-1){20}}
\put(40,10){\line(0,1){20}}

\put(-1,31){\makebox(0,0)[br]{$3$}}
\put(-1,20){\makebox(0,0)[r]{$1$}}
\put(-1,9){\makebox(0,0)[tr]{$1$}}
\put(21,9){\makebox(0,0)[tl]{$1$}}
\put(21,20){\makebox(0,0)[l]{$1$}}
\put(19,31){\makebox(0,0)[br]{$2$}}
\put(41,9){\makebox(0,0)[tl]{$1$}}
\put(41,20){\makebox(0,0)[l]{$1$}}
\put(41,31){\makebox(0,0)[bl]{$1$}}

\end{picture}
},
\quad
\xi_1=
\vcenter{\hsize=60\unitlength
\begin{picture}(40,40)

\put(0,10){\circle*{3}}
\put(0,30){\circle*{3}}
\put(20,10){\circle*{3}}
\put(20,30){\circle*{3}}
\put(40,10){\circle*{3}}
\put(40,30){\circle*{3}}

\put(0,20){\circle*{3}}
\put(20,20){\circle*{3}}
\put(40,20){\circle*{3}}

\put(0,10){\line(0,1){20}}
\put(0,30){\line(1,-1){20}}
\put(20,10){\line(0,1){20}}
\put(20,30){\line(1,-1){20}}
\put(40,10){\line(0,1){20}}

\put(-1,31){\makebox(0,0)[br]{$3$}}
\put(-1,20){\makebox(0,0)[r]{$1$}}
\put(-1,9){\makebox(0,0)[tr]{$1$}}
\put(21,9){\makebox(0,0)[tl]{$2$}}
\put(21,20){\makebox(0,0)[l]{$1$}}
\put(19,31){\makebox(0,0)[br]{$2$}}
\put(41,9){\makebox(0,0)[tl]{$1$}}
\put(41,20){\makebox(0,0)[l]{$1$}}
\put(41,31){\makebox(0,0)[bl]{$1$}}

\end{picture}
}
\quad
\mbox{and}
\quad
\xi_2=
\vcenter{\hsize=60\unitlength
\begin{picture}(40,40)

\put(0,10){\circle*{3}}
\put(0,30){\circle*{3}}
\put(20,10){\circle*{3}}
\put(20,30){\circle*{3}}
\put(40,10){\circle*{3}}
\put(40,30){\circle*{3}}

\put(0,20){\circle*{3}}
\put(20,20){\circle*{3}}
\put(40,20){\circle*{3}}

\put(0,10){\line(0,1){20}}
\put(0,30){\line(1,-1){20}}
\put(20,10){\line(0,1){20}}
\put(20,30){\line(1,-1){20}}
\put(40,10){\line(0,1){20}}

\put(-1,31){\makebox(0,0)[br]{$3$}}
\put(-1,20){\makebox(0,0)[r]{$1$}}
\put(-1,9){\makebox(0,0)[tr]{$1$}}
\put(21,9){\makebox(0,0)[tl]{$2$}}
\put(21,20){\makebox(0,0)[l]{$1$}}
\put(19,31){\makebox(0,0)[br]{$2$}}
\put(41,9){\makebox(0,0)[tl]{$3$}}
\put(41,20){\makebox(0,0)[l]{$1$}}
\put(41,31){\makebox(0,0)[bl]{$1$}}

\end{picture}
}
$$
}

Let $\epsilon=1$ and let $P$ and $y_0$, $x_1$, $y_1$, $x_2$ be as follows.
$$
%\vcenter{\hsize=.4\textwidth\relax
\begin{picture}(40,60)

\put(10,15){\circle*{2}}
\put(10,25){\circle*{2}}
\put(10,35){\circle*{2}}
\put(10,45){\circle*{2}}

\put(20,10){\circle*{2}}
\put(20,20){\circle*{2}}
\put(20,30){\circle*{2}}
\put(20,40){\circle*{2}}
\put(20,50){\circle*{2}}

\put(9,14){\makebox(0,0)[tr]{$x_1$}}
\put(9,46){\makebox(0,0)[br]{$y_1$}}
\put(21,30){\makebox(0,0)[l]{$y_0=x_2$}}

\put(10,15){\line(0,1){30}}
\put(20,10){\line(0,1){40}}
\put(10,15){\line(2,3){10}}
\put(10,45){\line(2,-3){10}}

\end{picture}
$$
Then $ d=7$,
$$
\xi_0=
\vcenter{\hsize=40\unitlength\relax
\begin{picture}(40,60)(10,0)

\put(10,15){\circle*{2}}
\put(10,25){\circle*{2}}
\put(10,35){\circle*{2}}
\put(10,45){\circle*{2}}

\put(20,10){\circle*{2}}
\put(20,20){\circle*{2}}
\put(20,30){\circle*{2}}
\put(20,40){\circle*{2}}
\put(20,50){\circle*{2}}

\put(9,14){\makebox(0,0)[tr]{$1$}}
\put(9,25){\makebox(0,0)[r]{$1$}}
\put(9,35){\makebox(0,0)[r]{$1$}}
\put(9,46){\makebox(0,0)[br]{$2$}}
\put(21,30){\makebox(0,0)[l]{$2$}}
\put(21,10){\makebox(0,0)[tl]{$1$}}
\put(21,20){\makebox(0,0)[l]{$1$}}
\put(21,40){\makebox(0,0)[l]{$1$}}
\put(21,50){\makebox(0,0)[bl]{$1$}}

\put(10,15){\line(0,1){30}}
\put(20,10){\line(0,1){40}}
\put(10,15){\line(2,3){10}}
\put(10,45){\line(2,-3){10}}

\end{picture}
}
\quad\mbox{and}
\quad
\xi_1=\xi_2=
\vcenter{\hsize=40\unitlength\relax
\begin{picture}(40,60)(10,0)

\put(10,15){\circle*{2}}
\put(10,25){\circle*{2}}
\put(10,35){\circle*{2}}
\put(10,45){\circle*{2}}

\put(20,10){\circle*{2}}
\put(20,20){\circle*{2}}
\put(20,30){\circle*{2}}
\put(20,40){\circle*{2}}
\put(20,50){\circle*{2}}

\put(9,14){\makebox(0,0)[tr]{$2$}}
\put(9,25){\makebox(0,0)[r]{$1$}}
\put(9,35){\makebox(0,0)[r]{$1$}}
\put(9,46){\makebox(0,0)[br]{$2$}}
\put(21,30){\makebox(0,0)[l]{$2$}}
\put(21,10){\makebox(0,0)[tl]{$1$}}
\put(21,20){\makebox(0,0)[l]{$1$}}
\put(21,40){\makebox(0,0)[l]{$1$}}
\put(21,50){\makebox(0,0)[bl]{$1$}}

\put(10,15){\line(0,1){30}}
\put(20,10){\line(0,1){40}}
\put(10,15){\line(2,3){10}}
\put(10,45){\line(2,-3){10}}

\end{picture}
}
$$

\end{example}

Let $i$ be an integer with $1\leq i\leq t-1$.
Take a saturated chain $x_i=z_0\covered z_1\covered\cdots\covered z_u=y_i$
with $u\epsilon=\qedist(x_i,y_i)$.
Then $z_1$, \ldots, $z_{u-1}\not\in\{x_1,\ldots, x_t,y_0, \ldots, y_{t-1}\}$.
In fact, if there is $z_\ell\in\{x_1,\ldots, x_t, y_0,\ldots, y_{t-1}\}$
with $1\leq \ell\leq u-1$, then 
$z_\ell=x_{i+1}$ or $z_\ell=y_{i-1}$ by \condn' and Lemma \ref{lem:cond np basic}.
If $z_\ell=x_{i+1}$, then 
$\qedist(x_i,x_{i+1})=\ell\epsilon$
and $\qedist(x_{i+1},y_i)=(u-\ell)\epsilon$ and therefore
\begin{eqnarray*}
\qedist(x_i,y_{i+1})
&\geq&\qedist(x_i,x_{i+1})+\qedist(x_{i+1},y_{i+1})\\
&=&\ell\epsilon+\qedist(x_{i+1},y_{i+1})\\
&=&u\epsilon-(u-\ell)\epsilon+\qedist(x_{i+1},y_{i+1})\\
&=&\qedist(x_i,y_i)-\qedist(x_{i+1},y_i)+\qedist(x_{i+1},y_{i+1}).
\end{eqnarray*}
This contradicts to the assumption that $y_0$, $x_1$, \ldots, $y_{t-1}$, $x_t$ is
\qered.

Therefore, $z_\ell\neq x_{i+1}$.
We see that $z_\ell\neq y_{i-1}$ by the same way.
Thus, $z_\ell\not\in\{x_1, \ldots, x_t, y_0, \ldots, y_{t-1}\}$
and therefore $\xi_t(z_\ell)=\epsilon$ for $1\leq \ell\leq u-1$
by \ref{item:eqet}.

Take saturated chains
$-\infty\covered z'_1\covered\cdots\covered z'_{u'}=x_i$ and
$y_i= z''_0\covered\cdots\covered z''_{u'}\covered\infty$
with
$
\xipdown_t(x_i)=\sum_{\ell=1}^{u'}\xi_t(z'_\ell)
%\quad\mbox{and}\quad
$  and $
\xipup_t(y_i)=\sum_{\ell=0}^{u''}\xi_t(z''_\ell)
$.
Then $C_i\define\{z'_1,\ldots, z'_{u'}, z_1, \ldots, z_{u-1}, z''_0, \ldots, z''_{u''}\}$
is a maximal chain in $P$ with $x_i$, $y_i\in C_i$ and 
$\xi_t(z)=\epsilon$ for any $z\in C_i\cap(x_i,y_i)$.
Further,
\begin{eqnarray*}
\xip_t(C_i)
&=&\sum_{\ell=1}^{u'}\xi_t(z'_\ell)
+\sum_{\ell=1}^{u-1}\xi_t(z_\ell)
+\sum_{\ell=0}^{u''}\xi_t(z''_\ell)\\
&=&
\xipdown(x_i)+\qedist(x_i,y_i)-\epsilon+\xipup(y_i)\\
&=&
 d-\epsilon
\end{eqnarray*}
by \ref{item:x0y0} above.

By a similar way, we can take maximal chains $C_0$ and $C_t$ with
$\xip_t(C_0)=\xip_t(C_t)= d-\epsilon$
and for $\xi_t(z)=\epsilon$ for any $z\in (C_0\cap(-\infty,y_0))\cup(C_t\cap(y_t,\infty))$.

%%%%%%%%%%%%%%%%%%%%%%%%%%%%%%%%%
\iffalse
By \ref{item:x0y0t} and \ref{item:geet} and the proof of Lemma \ref{lem:plus ineq},
we see that for any $i$ with $0\leq i\leq t$,
there exists a maximal chain $C_i$ in $P$ such that 
$\xip_t(C_i)\geq d-\epsilon$.
By \ref{item:xipt}, we see that $\xip_t(C_i)= d-\epsilon$ and
therefore, 
%by the proof of Lemma \ref{lem:plus ineq} that
$\xi_t(z)=\epsilon$ for any $z\in C_i\cap(x_i,y_i)$
for $i$ with $0\leq i\leq t$.
\fi
%%%%%%%%%%%%%%%%%%%%%%%%%%%%%%%%

Define $\xi\colon P^-\to\ZZZ$ by
$$
\xi(z)\define
\begin{cases}
\xi_t(z),&\mbox{if $z\in P$;}\\
 d,&\mbox{if $z=-\infty$.}
\end{cases}
$$
Then by the facts above and Proposition \ref{prop:gen equiv},
we see the following.

\begin{prop}
\mylabel{prop:gen const}
Let $y_0$, $x_1$, \ldots, $y_{t-1}$, $x_t$
be a \qered\ \scn' in $P$.
Set $x_0\define-\infty$ and $y_t\define\infty$.
Then there exists $\xi\in \se$ such that
$T^\xi$ is a generator of $\omegae$  and
$$
\deg T^\xi=\qe(x_0,y_0,\ldots, x_t,y_t).
$$
\end{prop}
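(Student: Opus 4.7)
The plan is to piece together the construction already developed in Lemmas \ref{lem:xi0} and \ref{lem:ind lemma} and then invoke Lemma \ref{lem:gen suf} to certify that the resulting Laurent monomial is in fact a generator. First, in the trivial case $t=0$, it suffices to take $\xi(z)=\epsilon$ for $z\in P$ and $\xi(-\infty)=d=\qedist(-\infty,\infty)$; this lies in $\se$ by Lemma \ref{lem:an sn elem}, and any maximal chain $C_0$ of maximal length in $P$ witnesses that the empty sequence satisfies the hypotheses of Lemma \ref{lem:gen suf}, so $T^\xi$ is a generator. From here on I assume $t\geq 1$.

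Starting from the map $\xi_0$ constructed in Lemma \ref{lem:xi0}, I would apply Lemma \ref{lem:ind lemma} iteratively for $k=0,1,\ldots,t-1$ to obtain maps $\xi_1,\ldots,\xi_t\colon P\to\ZZZ$ satisfying (H1)--(H5) at each stage. This yields a map $\xi_t$ with $\xi_t(z)\geq\epsilon$ on $P$, $\xi_t(z)=\epsilon$ off $\{x_1,\ldots,x_t,y_0,\ldots,y_{t-1}\}$, $\xip_t(C)\leq d-\epsilon$ for every maximal chain, and the crucial equality $\xipdown_t(x_i)+\qedist(x_i,y_i)+\xipup_t(y_i)=d$ for $0\leq i\leq t$ (with the boundary conventions). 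I then define $\xi\colon P^-\to\ZZZ$ by $\xi(z)=\xi_t(z)$ on $P$ and $\xi(-\infty)=d$; this immediately puts $\xi\in\se$.

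The main remaining task is to produce, for each $i$ with $0\leq i\leq t$, a maximal chain $C_i$ of $P$ with $\xip(C_i)=d-\epsilon$, containing the appropriate elements among $x_i,y_i$, and with $\xi(c)=\epsilon$ for every $c\in C_i\cap(x_i,y_i)$. Concatenate three saturated pieces: a chain in $P^-$ from $-\infty$ up to $x_i$ realizing $\xipdown_t(x_i)$, a shortest saturated chain from $x_i$ to $y_i$ of length $\qedist(x_i,y_i)/\epsilon$, and a chain from $y_i$ up to $\infty$ in $P^+$ realizing $\xipup_t(y_i)$. The total contribution then equals $d-\epsilon$ by the equality above. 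The delicate point — this is the main obstacle — is to show that the shortest saturated chain between $x_i$ and $y_i$ can be chosen so that its interior avoids $\{x_1,\ldots,x_t,y_0,\ldots,y_{t-1}\}$, so that by (2) of Lemma \ref{lem:ind lemma} every interior vertex contributes exactly $\epsilon$. If some interior element $z_\ell$ of a chosen shortest chain coincided with $x_{i+1}$ or $y_{i-1}$ (the only possibilities by Lemma \ref{lem:cond np basic} and \condn'), combining the two halves of the chain with a shortest path from $x_{i+1}$ to $y_{i+1}$ (resp.\ from $x_{i-1}$ to $y_{i-1}$) would produce a saturated chain from $x_i$ to $y_{i+1}$ (resp.\ $x_{i-1}$ to $y_i$) of length $\qedist(x_i,y_i)-\qedist(x_{i+1},y_i)+\qedist(x_{i+1},y_{i+1})$, contradicting the fact that $y_0,x_1,\ldots,y_{t-1},x_t$ is \qered. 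The cases $i=0$ and $i=t$ need only minor boundary adjustments.

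Having assembled $C_0,\ldots,C_t$, each lies in $C_\xi^{[d-\epsilon]}$ and satisfies $\xi(c)=\epsilon$ on $C_i\cap(x_i,y_i)$, together with the incidence conditions $C_0\ni y_0$, $C_t\ni x_t$, and $C_i\ni x_i,y_i$ for $1\leq i\leq t-1$. Applying Lemma \ref{lem:gen suf} (with $s=t$, $z_i=y_i$, $w_i=x_i$) then shows that $T^\xi$ is a generator of $\omegae$, and by construction $\deg T^\xi=\xi(-\infty)=d=\qe(x_0,y_0,\ldots,x_t,y_t)$, completing the proof.
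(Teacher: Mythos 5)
Your proposal is correct and follows essentially the same route as the paper: starting from $\xi_0$ of Lemma \ref{lem:xi0}, iterating Lemma \ref{lem:ind lemma} to reach $\xi_t$, using the \qered\ property to show that any saturated chain from $x_i$ to $y_i$ realizing $\qedist(x_i,y_i)$ has interior disjoint from $\{x_1,\ldots,x_t,y_0,\ldots,y_{t-1}\}$, concatenating to build the chains $C_i\in C_\xi^{[d-\epsilon]}$, and concluding via the sufficient condition of Lemma \ref{lem:gen suf} (equivalently, Proposition \ref{prop:gen equiv}). The only nitpick is terminological: for $\epsilon=1$ the chain of length $\qedist(x_i,y_i)/\epsilon$ is a longest, not shortest, saturated chain, but the property you actually use (length equal to $\qedist(x_i,y_i)/\epsilon$) is the right one.
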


%By Propositions \ref{prop:gen equiv} and \ref{prop:gen const}, 
%we characterize the level and anticanonical properties of $\kcp$.
Now we state characterizations of level and anticanonical level
properties of $\kcp$.

\begin{thm}
\mylabel{thm:level cri}
The following conditions are equivalent.
\begin{enumerate}
\item
\mylabel{item:level}
$\kcp$ is level (resp.\ anticanonical level).
\item
\mylabel{item:no qonered}
\qonered\ (resp.\ \qmonered) \scn' is the empty sequence only.
\item
\mylabel{item:qone small}
For any sequence $y_0$, $x_1$, \ldots, $y_{t-1}$, $x_t$
with \condn',
$$
\qone(x_0,y_0,\ldots, x_t,y_t)\leq\qonedist(x_0,y_t)
$$
(resp.\
$\qmone(x_0,y_0,\ldots, x_t,y_t)\leq\qmonedist(x_0,y_t)$),
where we set $x_0\define-\infty$ and $y_t\define \infty$.
\end{enumerate}
\end{thm}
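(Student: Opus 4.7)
The plan is to handle both the level and anticanonical level cases uniformly by letting $\epsilon\in\{1,-1\}$, to establish $(1)\Leftrightarrow(2)$ via the degree characterization of generators of $\omegae$ from the earlier results of this section, and then to obtain $(2)\Leftrightarrow(3)$ by a direct combinatorial reduction on \sscn'.

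For $(1)\Leftrightarrow(2)$: Corollary \ref{cor:gen deg max} combined with Proposition \ref{prop:gen const} identifies the set of degrees of generators of $\omegae$ as $\{\qe(x_0,y_0,\ldots,x_t,y_t)\mid y_0,x_1,\ldots,y_{t-1},x_t\text{ is a \qered\ \scn'}\}$, while Corollary \ref{cor:gen deg min} pins its minimum at $\qedist(-\infty,\infty)$, already realized by the empty sequence ($t=0$, $\qe(x_0,y_0)=\qedist(-\infty,\infty)$). So $\kcp$ is level iff every \qered\ \scn' yields this minimum value. Any non-empty \qered\ \scn' satisfies, via the \qered\ inequality at $i=0$, $j=t$, $\qedist(-\infty,\infty)=\qedist(x_0,y_t)<\qe(x_0,y_0,\ldots,x_t,y_t)$, so levelness forces the only \qered\ \scn' to be empty, giving $(1)\Leftrightarrow(2)$.

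The direction $(3)\Rightarrow(2)$ of the second equivalence follows from the same $i=0$, $j=t$ application of the \qered\ inequality. For $(2)\Rightarrow(3)$ I argue the contrapositive by a reduction step. Take a \scn' $\sigma\colon y_0,x_1,\ldots,y_{t-1},x_t$ violating $(3)$, so $\qe(x_0,y_0,\ldots,x_t,y_t)>\qedist(x_0,y_t)$. If $\sigma$ is not \qered, pick $i<j$ with $x_i<y_j$ and $\qedist(x_i,y_j)\geq\qe(x_i,y_i,\ldots,x_j,y_j)$, and collapse the intermediate block to obtain
\[
\sigma'\colon\ y_0,x_1,\ldots,y_{i-1},x_i,y_j,x_{j+1},\ldots,y_{t-1},x_t.
\]
A telescoping computation gives $\qe(\sigma')-\qe(\sigma)=\qedist(x_i,y_j)-\qe(x_i,y_i,\ldots,x_j,y_j)\geq 0$, so $\qe$ is non-decreasing while the length strictly drops. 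Iterating terminates in a \qered\ \scn' with $\qe\geq\qe(\sigma)>\qedist(-\infty,\infty)$, hence non-empty, contradicting $(2)$.

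The main obstacle will be verifying that the collapse $\sigma\mapsto\sigma'$ preserves \condn'. The alternation at the new junction follows from $y_{i-1}>x_i$, $x_i<y_j$, $y_j>x_{j+1}$ (with boundary conventions absorbing $i=0$ or $j=t$). For the non-comparability clause, pairs of indices in $\sigma'$ at separation at least two correspond under re-indexing to pairs in $\sigma$ at separation at least two, so the required $y'_k\not>x'_{k'}$ is inherited from \condn' of $\sigma$; the boundary cases are handled by noting that $x_0=-\infty$ and $y_t=\infty$ satisfy all the relevant inequalities vacuously.
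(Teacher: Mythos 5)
Your proposal is correct and follows essentially the same route as the paper: $(1)\Leftrightarrow(2)$ via Proposition \ref{prop:gen const}, Corollary \ref{cor:gen deg min} and the degree upper bound (the paper reruns Proposition \ref{prop:gen equiv} directly where you invoke Corollary \ref{cor:gen deg max}, but that corollary is itself derived from that proposition, so this is only cosmetic), and $(2)\Leftrightarrow(3)$ by the same collapse-and-iterate reduction to a \qered\ sequence. Your claim that the cited results ``identify the set of degrees of generators'' is a slight overstatement (they give containments in both directions, not equality), but your argument only uses the two correct weaker facts, so nothing is affected.
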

\begin{proof}
Let $\epsilon=1$ or $-1$.
We prove the level case and the anticanonical level case simultaneously
by setting $\epsilon=1$ when considering the level case and $\epsilon=-1$
when considering the anticanonical level case.
The contraposition of \ref{item:level}$\Rightarrow$\ref{item:no qonered}
follows from the fact that
%Proposition \ref{prop:gen const} and 
%Corollary \ref{cor:gen deg min},
%since 
$\qe(-\infty,y_0,x_1,\cdots,y_{t-1}, x_t,\infty)
>\qedist(-\infty,\infty)$ for any nonempty \qered\
sequence $y_0$, $x_1$, \ldots, $y_{t-1}$, $x_t$ with \condn',
since by
Proposition \ref{prop:gen const} and 
Corollary \ref{cor:gen deg min},
%we see that 
there are generators of $\omegae$ with degrees 
$\qe(-\infty,y_0,x_1,\cdots,y_{t-1}, x_t,\infty)$ and
$\qedist(-\infty,\infty)$ respectively.
%since if there is a nonempty \qered\ \scn', then there exists a generator
%of $\omegae$ whose degree is greater than $\qedist(-\infty,\infty)$.

We next show \ref{item:no qonered}$\Rightarrow$\ref{item:level}.
Let $T^\xi$, $\xi\in\se$ be an arbitrary generator of $\omegae$ and set 
$d\define\deg T^\xi$.
%>\qedist(-\infty,\infty)$
%(cf.\ Corollary \ref{cor:gen deg min}).
Then by Proposition \ref{prop:gen equiv}, we see that there exists a \qered\ sequence
$y_0$, $x_1$, \ldots, $y_{t-1}$, $x_t$ with \condn' and 
$C_0$, $C_1$, \ldots, $C_t\in C_\xi^{[d-\epsilon]}$
satisfying conditions of \ref{item:qered} of Proposition \ref{prop:gen equiv}.
By assumption $t=0$. Therefore $\xi(z)=\epsilon$ for any $z\in C_0$.
Thus, $\epsilon(\#C_0)=\xip(C_0)=d-\epsilon$.
On the other hand, since
$C_0\cup\{-\infty,\infty\}$ is a saturated chain from $-\infty$ to $\infty$,
we see that $d=\epsilon(\#C_0)+\epsilon=
\epsilon(\#C_0+1)\leq\qedist(-\infty,\infty)$.
Therefore, $d=\qedist(-\infty,\infty)$ by Corollary \ref{cor:gen deg min}.
%This contradicts to the
%the assumption.
%Therefore, we see that $t>0$ and there is a nonempty \qered\ \scn'.

\ref{item:qone small}$\Rightarrow$\ref{item:no qonered} is trivial.
In order to prove \ref{item:no qonered}$\Rightarrow$\ref{item:qone small},
let $y_0$, $x_1$, \ldots, $y_{t-1}$, $x_t$ be an arbitrary \scn'.
If there are $i$, $j$ with $i<j$ and $x_i<y_j$
and
$$
\qedist(x_i,y_j)\geq\qe(x_i,y_i,\ldots, x_j,y_j)
$$
then by setting $x'_u\define x_u$ for $1\leq u\leq i$,
$x'_u\define x_{u-j+i}$ for $i<u\leq t-j+i$,
$y'_u\define y_u$ for $0\leq u<i$ and
$y'_u\define y_{u-j+i}$ for $i\leq u<t-j+i$,
we get a sequence
$y'_0$, $x'_1$, \ldots $y'_{t-j+i-1}$, $x'_{t-j+i}$
with \condn' 
and
$$
\qe(x_0,y_0,\ldots, x_t,y_t)\leq
\qe(x'_0,y'_0,\ldots, x'_{t-j+i}, y'_{t-j+i}),
$$
where we set $x_0=x'_0=-\infty$ and $y_t=y'_{t-j+i}=\infty$.

Repeating this argument, we see that there is a \qered\ sequence
$y''_0$, $x''_1$, \ldots, $y''_{t''-1}$, $x''_{t''}$ with \condn'
such that
$$
\qe(x_0,y_0, \ldots, x_t,y_t)\leq
\qe(x''_0,y''_0, \ldots, x''_{t''},y''_{t''}),
$$
where we set $x''_0\define-\infty$ and $y''_{t''}\define\infty$.
Since \qered\ \scn' is the empty sequence only by assumption,
we see that $t''=0$, i.e.,
$$
\qe(x_0,y_0, \ldots, x_t,y_t)\leq\qedist(-\infty,\infty).
$$
\end{proof}

Note that if one replaces \condn' to \condn\ in \ref{item:no qonered}
and \ref{item:qone small} of Theorem \ref{thm:level cri}, one obtains a criterion
of level (resp.\ anticanonical level) property of $\kop$ \cite{mo,mf}.
Since \condn' is a weaker condition than \condn, we see the following.

\begin{cor}
\mylabel{cor:order implies chain}
If $\kcp$ is level (resp.\ anticanonical level), then so is $\kop$.
\end{cor}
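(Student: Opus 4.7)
The plan is to deduce this directly from Theorem \ref{thm:level cri} combined with the corresponding characterization of levelness (resp.\ anticanonical levelness) of $\kop$ established in \cite{mo,mf}. The setup is parallel: Theorem \ref{thm:level cri} characterizes the level (resp.\ anticanonical level) property of $\kcp$ by the statement that every \qonered\ (resp.\ \qmonered) \scn' is the empty sequence, while the analogous result in \cite{mo,mf} characterizes the level (resp.\ anticanonical level) property of $\kop$ by the statement that every \qonered\ (resp.\ \qmonered) \scn\ is the empty sequence.

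The key observation is purely combinatorial: \condn' is weaker than \condn. Indeed, both conditions require the zigzag inequalities $y_0>x_1<y_1>x_2<\cdots<y_{t-1}>x_t$, but the second clause of \condn\ demands $y_i\not\geq x_j$ while \condn' only demands $y_i\not> x_j$ (for $i\leq j-2$). Hence every sequence satisfying \condn\ also satisfies \condn'. Moreover, the \qonered\ (resp.\ \qmonered) property, as formulated in Definition \ref{def:qered}, depends only on the sequence itself via the quantities $\qedist(x_i,y_j)$ and $\qe(x_i,y_i,\ldots,x_j,y_j)$, not on which version of \condn\ it is meant to satisfy. Consequently, a \qonered\ \scn\ is in particular a \qonered\ \scn' (and similarly for \qmonered).

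Now the deduction is immediate. Suppose $\kcp$ is level. By Theorem \ref{thm:level cri}, the only \qonered\ \scn' is the empty sequence. Since every \qonered\ \scn\ is also a \qonered\ \scn' by the preceding paragraph, the only \qonered\ \scn\ is likewise the empty sequence. Applying the criterion for $\kop$ from \cite{mo,mf} in the reverse direction yields that $\kop$ is level. The anticanonical level case is identical with $\epsilon=-1$ replacing $\epsilon=1$ throughout. There is no real obstacle: the content is entirely in the two (already proved) characterization theorems, and the corollary is a one-line implication once one notes the set-theoretic inclusion of condition-N sequences inside condition-N$'$ sequences.
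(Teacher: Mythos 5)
Your proposal is correct and is essentially the paper's own argument: the paper deduces the corollary from Theorem \ref{thm:level cri} together with the analogous criterion for $\kop$ from \cite{mo,mf}, noting that \condn\ implies \condn' so that every \qonered\ (resp.\ \qmonered) \scn\ is in particular a \qonered\ (resp.\ \qmonered) \scn'. Your additional observation that the \qered\ property depends only on the sequence itself is a worthwhile explicit check, but the route is the same.
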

The converse of this corollary does not hold.

\begin{example}
\mylabel{ex:level}
\rm
Let $n$, $m_1$, $m_2$ be integers with $n\geq 4$, $m_1$, $m_2\geq n-2$
and let
$P=\{a_1$, \ldots, $a_n$, $b_1$, \ldots, $b_{m_1}$, $c_1$, \ldots, $c_{m_2}$, $d\}$
be a poset with covering relations
$a_1\covered \cdots \covered a_n$,
$b_1\covered\cdots\covered b_{m_1}\covered d\covered c_1\covered \cdots \covered c_{m_2}$
and
$a_1\covered d\covered a_n$.
$$
\begin{picture}(40,120)
\put(10,40){\circle*{2}}
\put(9,39){\makebox(0,0)[tr]{$a_1$}}
\put(10,80){\circle*{2}}
\put(9,81){\makebox(0,0)[br]{$a_n$}}

\put(30,10){\circle*{2}}
\put(30,50){\circle*{2}}
\put(30,60){\circle*{2}}
\put(30,70){\circle*{2}}
\put(30,110){\circle*{2}}

\put(32,10){\makebox(0,0)[l]{$b_1$}}
\put(32,50){\makebox(0,0)[l]{$b_{m_1}$}}
\put(32,60){\makebox(0,0)[l]{$d$}}
\put(32,70){\makebox(0,0)[l]{$c_1$}}
\put(32,110){\makebox(0,0)[l]{$c_{m_2}$}}

\put(10,40){\line(0,1){10}}
\put(10,70){\line(0,1){10}}
\put(30,10){\line(0,1){10}}
\put(30,40){\line(0,1){40}}
\put(30,100){\line(0,1){10}}

\put(10,40){\line(1,1){20}}
\put(10,80){\line(1,-1){20}}

\multiput(10,52.5)(0,2.5){7}{\makebox(0,0){$\cdot$}}
\multiput(30,22.5)(0,2.5){7}{\makebox(0,0){$\cdot$}}
\multiput(30,82.5)(0,2.5){7}{\makebox(0,0){$\cdot$}}

\end{picture}
$$
Then $\kop$ is level and $\kcp$ is not.
In fact, it is easily verified that there is no \qonered\ \scn\ except the empty sequence.
Thus, $\kop$ is level.
However, $d$, $a_1$, $a_n$, $d$ is a \qonered\ \scn'.
Further, if we define $\xi\colon P^-\to\ZZZ$ by
$$
\xi(z)\define
\begin{cases}
m_1,&\mbox{if $z=a_1$;}\\
m_2,&\mbox{if $z=a_n$;}\\
n-2,&\mbox{if $z=d$;}\\
m_1+m_2+n-1,&\mbox{if $z=-\infty$;}\\
1,&\mbox{otherwise,}
\end{cases}
$$
then $T^\xi$ is a generator of $\omega$ with 
$\deg T^\xi=m_1+m_2+n-1$,
while the minimal degree of the generators of $\omega$ is 
$\qonedist(-\infty,\infty)=m_1+m_2+2<m_1+m_2+n-1$,
since $n\geq 4$.
\end{example}

\begin{example}
\rm
\mylabel{ex:antican level}
Let $n$ be a positive integer
and let
$P=\{a_1$, \ldots, $a_{n+3}$, $b_1$, \ldots, $b_n$, $c_1$, \ldots, $c_n$, $d_1$, $d_2$, $d_3\}$
be a poset with covering relations
$a_1\covered\cdots\covered a_{n+3}$, $d_1\covered d_2\covered d_3$,
$a_1\covered b_1\covered\cdots\covered b_n\covered d_2\covered 
c_1\covered \cdots\covered c_n\covered a_{n+3}$.
$$
\begin{picture}(70,120)
\put(10,10){\circle*{2}}
\put(10,20){\circle*{2}}
\put(10,100){\circle*{2}}
\put(10,110){\circle*{2}}
\put(9,9){\makebox(0,0)[tr]{$a_1$}}
\put(9,20){\makebox(0,0)[r]{$a_2$}}
\put(9,100){\makebox(0,0)[r]{$a_{n+2}$}}
\put(9,111){\makebox(0,0)[br]{$a_{n+3}$}}

\put(60,50){\circle*{2}}
\put(60,60){\circle*{2}}
\put(60,70){\circle*{2}}

\put(61,50){\makebox(0,0)[l]{$d_1$}}
\put(61,60){\makebox(0,0)[l]{$d_2$}}
\put(61,70){\makebox(0,0)[l]{$d_3$}}

\put(20,20){\circle*{2}}
\put(50,50){\circle*{2}}
\put(50,70){\circle*{2}}
\put(20,100){\circle*{2}}

\put(21,19){\makebox(0,0)[tl]{$b_1$}}
\put(51,49){\makebox(0,0)[tl]{$b_n$}}
\put(51,71){\makebox(0,0)[bl]{$c_1$}}
\put(21,101){\makebox(0,0)[bl]{$c_n$}}

\put(10,10){\line(0,1){20}}
\put(10,90){\line(0,1){20}}
\put(60,50){\line(0,1){20}}

\put(10,10){\line(1,1){20}}
\put(40,40){\line(1,1){20}}
\put(40,80){\line(1,-1){20}}
\put(10,110){\line(1,-1){20}}

\multiput(10,32.5)(0,2.5){23}{\makebox(0,0){$\cdot$}}
\multiput(32.5,32.5)(2.5,2.5){3}{\makebox(0,0){$\cdot$}}
\multiput(32.5,87.5)(2.5,-2.5){3}{\makebox(0,0){$\cdot$}}
\end{picture}
$$
Then $\kop$ is anticanonical level and $\kcp$ is not.
In fact, it is easily verified that there is no \qmonered\ \scn\
except the empty sequence.
Thus, $\kop$ is anticanonical level.
However, $d_2$, $a_1$, $a_{n+3}$, $d_2$ is a \qmonered\ \scn'.
Further, if we set
$\xi\colon P^-\to\ZZZ$ by
$$
\xi(z)\define
\begin{cases}
n-1,&\mbox{if $z=a_1$, $a_{n+3}$ or $d_2$}\\
n-4,&\mbox{if $z=-\infty$}\\
-1,&\mbox{otherwise,}
\end{cases}
$$
then $T^\xi$ is a generator of $\omega^{(-1)}$ with degree $n-4$,
while the minimal degree of the generators of $\omega^{(-1)}$ is 
$\qmonedist(-\infty,\infty)=-4$.
\end{example}

%%%%%%%%%%%%%%%%%%%%%%%%%%%%%%%%%%%%%%%%%%

\section{Degrees of the generators of canonical and anticanonical ideals}

In this section, we show that the degrees of generators of $\omega$ and $\omega^{(-1)}$
are consecutive integers.
Since the cases of $\omega$ and $\omega^{(-1)}$ are similar, we set $\epsilon=1$ or $-1$
and treat the cases of $\omega$ and $\omega^{(-1)}$ simultaneously.
Recall that the minimal degree of the generators of $\omegae$ is $\qedist(-\infty,\infty)$
by Corollary \ref{cor:gen deg min}.

We first note the following fact.

\begin{lemma}
\mylabel{lem:cross chain}
Let $\xi\in\se$ and $d=\xi(-\infty)$.
Suppose that $C_1$, $C_2\in C_\xi^{[d-\epsilon]}$ and
$z\in C_1\cap C_2$.
Then
$(C_1\cap(-\infty,z])\cup(C_2\cap(z,\infty))\in C_\xi^{[d-\epsilon]}$.
\end{lemma}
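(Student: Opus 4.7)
The plan is to use a simple swap/averaging argument. The key observation is that if we form the spliced chain $C' := (C_1 \cap (-\infty,z]) \cup (C_2 \cap (z,\infty))$, then we can also form the ``opposite'' spliced chain
$$
C'' := (C_2 \cap (-\infty,z]) \cup (C_1 \cap (z,\infty)),
$$
and together these two chains use precisely the same multiset of elements as $C_1$ together with $C_2$.

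First I would verify that $C'$ and $C''$ are both maximal chains in $P$. Because $C_1$ and $C_2$ are maximal and $z$ lies on each, $C_1 \cap (-\infty,z]$ is a saturated chain from a minimal element of $P$ up to $z$, while $C_2 \cap (z,\infty)$ is a saturated chain from the element that covers $z$ in $C_2$ up to a maximal element of $P$; concatenating them at $z$ produces a saturated chain from a minimal element to a maximal element, hence a maximal chain in $P$. The same reasoning applies to $C''$.

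Next I would compute $\xi^+(C') + \xi^+(C'')$. Partitioning each of $C_1$ and $C_2$ as their parts in $(-\infty,z]$ and $(z,\infty)$, and recalling $\xi^+(C_1) = \xi^+(C_2) = d-\epsilon$, one immediately gets
$$
\xi^+(C') + \xi^+(C'') = \xi^+(C_1) + \xi^+(C_2) = 2(d-\epsilon).
$$
On the other hand, since $\xi \in \se$, every maximal chain $C$ in $P$ satisfies $\xi^+(C) \leq \xi(-\infty) - \epsilon = d-\epsilon$; in particular this holds for both $C'$ and $C''$. Thus equality must hold in each, giving $\xi^+(C') = d - \epsilon$, i.e., $C' \in C_\xi^{[d-\epsilon]}$, as desired.

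There is essentially no obstacle: the argument is a one-line counting identity combined with the defining inequality of $\se$. The only care needed is the (entirely routine) check that the spliced chain is genuinely a maximal chain in $P$, which follows because the splice occurs at the common point $z$.
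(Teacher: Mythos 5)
Your argument is correct and is essentially identical to the paper's proof: both form the complementary spliced chain, observe that the two splices together have $\xi^+$-sum $2(d-\epsilon)$, and conclude equality from the bound $\xi^+(C)\leq d-\epsilon$ valid for all maximal chains. No issues.
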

\begin{proof}
Set 
$C=(C_1\cap(-\infty,z])\cup(C_2\cap(z,\infty))$ and
$C'=(C_2\cap(-\infty,z])\cup(C_1\cap(z,\infty))$.
Then, it is easily verified that $C$ and $C'$ are maximal chains in $P$.
Further,
$$
\xip(C)=
\sum_{c\in C_1\cap(-\infty,z]}\xi(w)+
\sum_{c\in C_2\cap(z,\infty)}\xi(w)
$$
and
$$
\xip(C')=
\sum_{c\in C_2\cap(-\infty,z]}\xi(w)+
\sum_{c\in C_1\cap(z,\infty)}\xi(w).
$$
Since
$
\sum_{c\in C_1\cap(-\infty,z]}\xi(w)+
\sum_{c\in C_1\cap(z,\infty)}\xi(w)
=\xip(C_1)=d-\epsilon
$
and
$
\sum_{c\in C_2\cap(-\infty,z]}\xi(w)+
\sum_{c\in C_2\cap(z,\infty)}\xi(w)
=\xip(C_2)=d-\epsilon
$,
we see that
$$
\xip(C)+\xip(C')=2(d-\epsilon).
$$
On the other hand, since $\xi\in\se$, we see that $\xip(C'')\leq d-\epsilon$
for any maximal chain $C''$ in $P$.
Thus, we see that
$
\xip(C)=\xi(C')=d-\epsilon$.
\end{proof}
Next we show the following fact.

\begin{lemma}
\mylabel{lem:max contain nonmin}
Let $\xi$ be an element of $\se$ with $d=\xi(-\infty)>\qedist(-\infty,\infty)$.
Then for any $C\in C_\xi^{[d-\epsilon]}$, there exists $z\in C$ such that $\xi(z)>\epsilon$.
\end{lemma}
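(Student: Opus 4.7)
The plan is to argue by contradiction. Suppose that for some $C \in C_\xi^{[d-\epsilon]}$ every element $z \in C$ satisfies $\xi(z) \leq \epsilon$. Since $\xi \in \se$ implies $\xi(z) \geq \epsilon$ for every $z \in P$, this forces $\xi(z) = \epsilon$ on all of $C$.

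Then $\xip(C) = \epsilon \cdot \#C$, and since $C \in C_\xi^{[d-\epsilon]}$ we have $\xip(C) = d - \epsilon$. Rearranging gives
\[
d = \epsilon(\#C + 1).
\]
Now observe that $C \cup \{-\infty, \infty\}$ is a saturated chain from $-\infty$ to $\infty$ in $P^\pm$ of length $\#C + 1$. By the very definition of $\qedist$,
\[
\qedist(-\infty, \infty) \geq \epsilon(\#C + 1) = d,
\]
contradicting the hypothesis $d > \qedist(-\infty, \infty)$.

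Since the argument is really just bookkeeping between the defining inequalities of $\se$, the definition of $C_\xi^{[d-\epsilon]}$, and the definition of $\qedist$, I do not expect any obstacle. The only thing to be slightly careful about is that $\epsilon$ may be $-1$, in which case ``length maximizing $\epsilon t$'' means length-minimizing; but this does not affect the inequality $\epsilon(\#C+1) \leq \qedist(-\infty,\infty)$ for a specific maximal chain $C$, which is all that is needed.
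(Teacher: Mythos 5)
Your proof is correct and is essentially identical to the paper's: both assume $\xi\equiv\epsilon$ on some $C\in C_\xi^{[d-\epsilon]}$, compute $d=\epsilon(\#C+1)$, and compare with $\qedist(-\infty,\infty)$ via the saturated chain $C\cup\{-\infty,\infty\}$. Your remark that the inequality $\epsilon(\#C+1)\leq\qedist(-\infty,\infty)$ holds for either sign of $\epsilon$ (since $\qedist$ is defined as a maximum of $\epsilon t$ over saturated chains) is exactly the right point to check.
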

\begin{proof}
Assume the contrary and take $C\in C_\xi^{[d-\epsilon]}$ with $\xi(c)\leq \epsilon$ for any
$c\in C$.
Since $\xi\in\se$, we see that $\xi(c)=\epsilon$ for any $c\in C$.
Therefore, 
$d-\epsilon=\xip(C)=\epsilon(\#C)=\epsilon(\#(C\cup\{-\infty,\infty\})-1)-\epsilon
\leq \qedist(-\infty,\infty)-\epsilon$.
This contradicts to the assumption that $d>\qedist(-\infty,\infty)$.
\end{proof}

Now we prove the following.

\begin{lemma}
\mylabel{lem:dec1}
Let $\xi$ be an element of $\se$ such that $T^\xi$ is a generator of $\omegae$ with
degree $d>\qedist(-\infty,\infty)$.
Define $\xi_1\colon P^-\to\ZZZ$ by
$$
\xi_1(z)\define
\begin{cases}
\xi(z)-1,&\vtop{\hsize.5\textwidth\relax\noindent
if there exists $C\in C_\xi^{[d-\epsilon]}$ with 
$z=\max\{c\in C\mid\xi(c)>\epsilon\}$ or $z=-\infty$;}\\
\xi(z),&\mbox{otherwise.}
\end{cases}
$$
Then $\xi_1\in\se$ and $T^{\xi_1}$ is a generator of $\omegae$ with degree $d-1$.
\end{lemma}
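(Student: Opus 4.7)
The plan is to prove three claims about $\xi_1$: that $\xi_1 \in \se$, that $\deg T^{\xi_1} = d - 1$, and that $T^{\xi_1}$ is a generator of $\omegae$. Let $M$ denote the set of $z \in P$ at which $\xi_1(z) = \xi(z) - 1$, i.e.\ those $z$ arising as $\max\{c \in C : \xi(c) > \epsilon\}$ for some $C \in C_\xi^{[d-\epsilon]}$.

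First I would verify $\xi_1 \in \se$. The pointwise bound $\xi_1(z) \geq \epsilon$ is immediate off $M$, while on $M$ the defining property forces $\xi(z) > \epsilon$, so $\xi_1(z) = \xi(z) - 1 \geq \epsilon$. For a maximal chain $C$, if $C \in C_\xi^{[d-\epsilon]}$ then Lemma \ref{lem:max contain nonmin} produces some $c \in C$ with $\xi(c) > \epsilon$, whose maximum lies in $C \cap M$, yielding $\xi_1^+(C) \leq d - 1 - \epsilon$; otherwise $\xi^+(C) < d - \epsilon$ strictly, so $\xi^+(C) \leq d - 1 - \epsilon$ by integrality, and $\xi_1 \leq \xi$ concludes. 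Since $\xi_1(-\infty) = d - 1$, this simultaneously gives $\xi_1 \in \se$ and the degree claim $\deg T^{\xi_1} = d - 1$.

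The core of the proof is showing $T^{\xi_1}$ is a generator, for which I argue contrapositively via Lemma \ref{lem:not a gen}: suppose an antichain $A \subseteq P$ witnesses that $T^{\xi_1}$ is not a generator. I will prove that the very same $A$ witnesses that $T^\xi$ is not a generator, contradicting the hypothesis. Given $C \in C_\xi^{[d-\epsilon]}$, uniqueness of any element of $C \cap A$ is automatic from $A$ being an antichain, so the task is to exhibit $a \in C \cap A$ with $\xi(a) > \epsilon$. I prove this by induction on $k := |C \cap M|$, which is at least $1$ by Lemma \ref{lem:max contain nonmin}. The base case $k = 1$ is clean: $\xi_1^+(C) = d - 1 - \epsilon$ places $C$ in $C_{\xi_1}^{[d-1-\epsilon]}$, so the hypothesis on $A$ provides $a \in C \cap A$ with $\xi_1(a) > \epsilon$, whence $\xi(a) \geq \xi_1(a) > \epsilon$.

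The inductive step, which I expect to be the main obstacle, invokes Lemma \ref{lem:cross chain} to splice chains. Assuming $k \geq 2$, set $z_C := \max\{c \in C : \xi(c) > \epsilon\}$ and pick $z' \in (C \cap M) \setminus \{z_C\}$; since $z_C$ is the maximum, $z' < z_C$. By definition of $M$ there is a chain $C_{z'} \in C_\xi^{[d-\epsilon]}$ with $z' = \max\{c \in C_{z'} : \xi(c) > \epsilon\}$. By Lemma \ref{lem:cross chain} the spliced chain $\tilde{C} := (C \cap (-\infty, z']) \cup (C_{z'} \cap (z', \infty))$ belongs to $C_\xi^{[d-\epsilon]}$. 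The key observation is that every $c \in C_{z'} \cap (z', \infty)$ satisfies $\xi(c) \leq \epsilon$, so none of these elements lies in $M$, which forces $\tilde{C} \cap M \subseteq C \cap (-\infty, z'] \cap M$ and in particular $|\tilde{C} \cap M| \leq k - 1$. Applying the inductive hypothesis to $\tilde{C}$ yields $a \in \tilde{C} \cap A$ with $\xi(a) > \epsilon$; since the condition $\xi(a) > \epsilon$ rules out $a \in C_{z'} \cap (z', \infty)$, we conclude $a \in C \cap (-\infty, z'] \subseteq C$, which completes the induction and hence the proof.
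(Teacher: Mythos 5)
Your proof is correct, but the main part takes a genuinely different route from the paper's. For the membership $\xi_1\in\se$ and the degree computation the two arguments coincide (both rest on Lemma \ref{lem:max contain nonmin} to see that every $C\in C_\xi^{[d-\epsilon]}$ meets the decremented set $M$). For the generator claim, however, the paper argues through the \emph{sufficient} condition of Proposition \ref{prop:gen equiv}: it takes the \qered\ \scn' $y_0,x_1,\ldots,y_{t-1},x_t$ and chains $C_0,\ldots,C_t$ witnessing that $T^\xi$ is a generator, sets $z_i=\min\{c\in C_i\mid \xi_1(c)=\xi(c)-1\}$ and $j=\min\{i\mid z_i\leq x_i\}$, and splices chains via Lemma \ref{lem:cross chain} to produce a truncated sequence $y_0,x_1,\ldots,y_{j-1},z_j$ with chains $C''_i\in C_{\xi_1}^{[d-1-\epsilon]}$ witnessing that $T^{\xi_1}$ is a generator. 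You instead work entirely with the antichain obstruction of Lemma \ref{lem:not a gen}, showing by induction on $\#(C\cap M)$ (with the same chain-splicing lemma) that any antichain killing $T^{\xi_1}$ would also kill $T^\xi$. Your induction is sound: every $z\in M$ satisfies $\xi(z)>\epsilon$, so $z_C=\max\{c\in C\mid\xi(c)>\epsilon\}$ is the top element of $C\cap M$ and is discarded in passing to $\tilde C$, the tail $C_{z'}\cap(z',\infty)$ is disjoint from $M$, and the element $a$ returned by the inductive hypothesis is forced back into $C$ because $\xi(a)>\epsilon$. What the paper's route buys is an explicit witnessing sequence for $\xi_1$ (reusable elsewhere); what yours buys is independence from the \qered-sequence machinery --- you only need the elementary antichain criterion plus Lemmas \ref{lem:cross chain} and \ref{lem:max contain nonmin}, which makes the argument more self-contained.
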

\begin{proof}
First we show that $\xi_1\in\se$.
It is clear that $\xi_1(z)\geq\epsilon$ for any $z\in P$ by the definition of $\xi_1$.
Let $C$ be an arbitrary maximal chain in $P$.
Then $\xip(C)\leq d-\epsilon$ since $\xi\in\se$.
If $\xip(C)\leq d-\epsilon-1$, then $\xip_1(C)\leq d-\epsilon-1$, since
$\xi_1(z)\leq \xi(z)$ for any $z\in P$.
Suppose that $C\in C_\xi^{[d-\epsilon]}$.
Then by Lemma \ref{lem:max contain nonmin}, we see that there is $z\in C$
with $\xi(z)>\epsilon$.
Therefore, if we set $z=\max\{c\in C\mid\xi(c)>\epsilon\}$, then
$\xi_1(z)=\xi(z)-1$.
Thus, we see that
$$
\xip_1(C)\leq\xip(C)-1\leq d-\epsilon-1=\xi_1(-\infty)-\epsilon.
$$

Now we show that $T^{\xi_1}$ is a generator of $\omegae$.
Since $T^\xi$ is a generator of $\omegae$, we see by Proposition \ref{prop:gen equiv}
that there is a \qered\ sequence $y_0$, $x_1$, \ldots, $y_{t-1}$, $x_t$ with \condn'
and $C_0$, $C_1$, \ldots, $C_t\in C_\xi^{[d-\epsilon]}$ 
which satisfy the conditions of \ref{item:qered} of Proposition \ref{prop:gen equiv}.
%%%%%%%%%%%%%%%%%%%%%%%%%
\iffalse
such that
$\xi(z)=\epsilon$ for any $i$ with $0\leq i\leq t$ and any $z\in C_i\cap(x_i,y_i)$,
where we set $x_0=-\infty$ and $y_t=\infty$,
and
if $t>0$, then
$x_i$, $y_i\in C_i$ for $1\leq i\leq t-1$ and $y_0\in C_0$,
$x_t\in C_t$.
\fi
%%%%%%%%%%%%%%%%%%%%%

If $t=0$, then $C_0\in C_\xi^{[d-\epsilon]}$ and $\xi(z)=\epsilon$ for any $z\in C_0$.
This contradicts to Lemma \ref{lem:max contain nonmin}.
Thus, we see that $t>0$.
Further, we see that
$\{c\in C_i\mid \xi(c)>\epsilon\}\neq\emptyset$ for $0\leq i\leq t$.
In particular, $\{c\in C_i\mid\xi_1(c)=\xi(c)-1\}\neq\emptyset$ for $0\leq i\leq t$.

Set $z_i\define\min\{c\in C_i\mid\xi_1(c)=\xi(c)-1\}$ for $0\leq i\leq t$.
Since $\xi(c)=\epsilon$ for any $c\in C_t\cap(x_t,\infty)$, 
we see that $z_t\leq x_t$.
Set $j\define\min\{i\mid z_i\leq x_i\}$.
Then $j>0$.
Further, for any $i$ with $0\leq i<j$, $z_i\geq y_i$.
For $i$ with $0\leq i\leq j$, take $C'_i\in C_\xi^{[d-\epsilon]}$ with 
$z_i=\max\{c\in C'_i\mid \xi(c)>\epsilon\}$.
Then by Lemma \ref{lem:cross chain}, we see that
$$
C''_i\define(C_i\cap(-\infty,z_i])\cup(C'_i\cap(z_i,\infty))\in C_\xi^{[d-\epsilon]}
$$
and moreover, by the definition of $z_i$, and the choice of $C'_i$,
$
\{c\in C''_i\mid\xi_1(c)\neq\xi(c)\}=\{z_i\}.
$
In particular, $\xip_1(C''_i)=d-\epsilon-1$ for $0\leq i\leq j$ since $\xi_1(z_i)=\xi(z_i)-1$.

Since
$y_0>x_1<y_1>x_2<\cdots<y_{j-1}>z_j$,
$y_0\in C''_0$, $z_j\in C''_j$, $x_i$, $y_i\in C''_i$ for $1\leq i\leq j-1$
and 
$C''_i\in C_{\xi_1}^{[d-1-\epsilon]}$ for $0\leq i\leq j$
and
$\xi_1(z)=\epsilon$ for any $z\in C''_i\cap(x_i,y_i)$ for $i$ with $0\leq i\leq j-1$ 
and $z\in C''_j\cap(z_j,\infty)$,
we see by Proposition \ref{prop:gen equiv} that $T^{\xi_1}$ is a generator of $\omegae$.
\end{proof}
Now we show the following.

\begin{thm}
\mylabel{thm:gen deg}
Let $P$ be a finite poset.
Set $d_0\define\qonedist(-\infty,\infty)$
(resp.\ $d_0\define\qmonedist(-\infty,\infty)$) 
and 
$\dmax\define\max\{\sum_{\ell=0}^t\qone(x_0,y_0,\ldots,x_t,y_t)\mid y_0$, $x_1$, \ldots,
$y_{t-1}$, $x_t$ is a \qonered\ \scn', where $x_0=-\infty$ and $y_t=\infty\}$
(resp.\ 
$\dmax\define\max\{\sum_{\ell=0}^t\qmone(x_0,y_0,\ldots,x_t,y_t)\mid y_0$, $x_1$, \ldots,
$y_{t-1}$, $x_t$ is a \qmonered\ \scn', where $x_0=-\infty$ and $y_t=\infty\}$).
Then for any integer $d$ with $d_0\leq d\leq \dmax$, there exists a generator of $\omega$
(resp.\ $\omega^{(-1)}$) of degree $d$.
\end{thm}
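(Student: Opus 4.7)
The plan is to combine Proposition \ref{prop:gen const}, which realizes every value $\qe(x_0,y_0,\ldots,x_t,y_t)$ attached to a \qered\ \scn'\ as the degree of some generator of $\omegae$, with Lemma \ref{lem:dec1}, which shows that from a generator of $\omegae$ of degree $d>\qedist(-\infty,\infty)$ one can manufacture a generator of degree $d-1$. Together these let us climb down from the top degree $\dmax$ to the bottom degree $d_0$ one integer at a time.

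First I would verify the two endpoints. By Corollary \ref{cor:gen deg min}, the minimum degree of any Laurent monomial in $\omegae$ (and hence of any generator) is $d_0=\qedist(-\infty,\infty)$, and the map $\xi$ constructed in Lemma \ref{lem:an sn elem} provides a generator realizing it (it is a generator because no element of $\omegae$ has strictly smaller degree). For the upper end, choose a \qered\ \scn'\ $y_0,x_1,\ldots,y_{t-1},x_t$ that attains the maximum defining $\dmax$; then Proposition \ref{prop:gen const} yields a generator $T^{\xi^{(0)}}\in\omegae$ with $\deg T^{\xi^{(0)}}=\dmax$. (Corollary \ref{cor:gen deg max} shows that no generator has larger degree, so $\dmax$ is indeed the maximum generator-degree.)

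Now fix an integer $d$ with $d_0\leq d\leq\dmax$. Starting from $T^{\xi^{(0)}}$ and applying Lemma \ref{lem:dec1}, I would construct a sequence $T^{\xi^{(0)}},T^{\xi^{(1)}},\ldots$ of generators of $\omegae$ whose degrees are $\dmax,\dmax-1,\dmax-2,\ldots$, as long as the current degree exceeds $d_0$. Concretely, given a generator $T^{\xi^{(k)}}$ of degree $\dmax-k$, the hypothesis of Lemma \ref{lem:dec1} is satisfied exactly when $\dmax-k>d_0$, in which case it produces a generator $T^{\xi^{(k+1)}}$ of degree $\dmax-k-1$. After $\dmax-d$ applications I reach a generator of the desired degree $d$; this is legitimate because at every step before the last the current degree is at least $d+1>d_0$.

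The main point is that there is essentially no further obstacle: Lemma \ref{lem:dec1} was designed so that its output is again a generator (its proof uses Lemmas \ref{lem:cross chain} and \ref{lem:max contain nonmin} to adjust $\xi$ at exactly one point of each maximal chain attaining the top value $d-\epsilon$, while preserving the \scn'\ witnessing generator-hood via Proposition \ref{prop:gen equiv}). The only thing to check is that the induction does not stall before reaching $d$, and the precondition ``degree $>d_0$'' in Lemma \ref{lem:dec1} matches exactly the range $d_0<\dmax-k\leq\dmax$ over which we must iterate, so no gap appears between $d_0$ and $\dmax$.
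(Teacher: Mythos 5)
Your proposal is correct and follows essentially the same route as the paper: Corollary \ref{cor:gen deg min} and Corollary \ref{cor:gen deg max} pin down the range $[d_0,\dmax]$, Proposition \ref{prop:gen const} produces a generator of degree $\dmax$, and repeated application of Lemma \ref{lem:dec1} (backward induction) fills in every degree down to $d_0$. The bookkeeping that the precondition of Lemma \ref{lem:dec1} holds exactly while the current degree exceeds $d_0$ is precisely what makes the descent terminate at $d_0$ without a gap, as in the paper.
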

\begin{proof}
We prove the assertions of $\omega$ and $\omega^{(-1)}$ simultaneously by setting 
$\epsilon=1$ or $-1$.
By Corollary \ref{cor:gen deg min}, we see that arbitrary generator of $\omegae$ has degree
greater than or equals to $d_0$.
Further, by Corollary \ref{cor:gen deg max}, we see that any generator of $\omegae$
has degree less than or equals to $\dmax$.
On the other hand, we see by Proposition \ref{prop:gen const} that there is a generator
of $\omegae$ with degree $\dmax$.
Thus, by Lemma \ref{lem:dec1} and the backward induction, we see the result.
\end{proof}

By the same argument as the end of \S\ref{sec:symbolic power}, we see the following fact.

\begin{cor}
Let $G$ be a finite graph.
Suppose that each odd cycle of $G$ has a triangular chord.
Then the degree of the generators of the canonical and anticanonical ideals of the Ehrhart ring of
the stable set polytope of $G$ are consecutive integers.
\end{cor}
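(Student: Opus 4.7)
The plan is straightforward: this corollary is really a translation of Theorem \ref{thm:gen deg} from the language of posets to the language of graphs, bridged by the Gilmore--Hoffman characterization of comparability graphs recalled as the Fact cited just before Corollary \ref{cor:comp graph}. So the work is essentially bookkeeping, not new estimates.

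First I would invoke the Fact: since every odd cycle of $G$ has a triangular chord, there exists a finite poset $P$ with $G = G(P)$. Next I would recall the identification already noted in the paper between stable sets of $G(P)$ and antichains of $P$: a subset of $P$ is an antichain exactly when it is a stable set of $G(P)$. Combined with the description $\msCCC(P) = \mathrm{conv}\{\chi_A \mid A \text{ an antichain of } P\}$, this identifies the stable set polytope of $G$ with $\msCCC(P)$. Hence the Ehrhart ring of the stable set polytope of $G$ is exactly $\ekcp$, its canonical ideal is $\omega$, and its anticanonical ideal is $\omega^{(-1)}$.

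Once that identification is in place, Theorem \ref{thm:gen deg} applied to the poset $P$ immediately gives that the degrees of the generators of $\omega$ (respectively $\omega^{(-1)}$) fill out the interval of integers from $\qonedist(-\infty,\infty)$ up to $\dmax$ (respectively from $\qmonedist(-\infty,\infty)$ up to the analogous $\dmax$ for $\epsilon = -1$), so in particular they are consecutive integers. This yields the corollary.

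There is no genuine obstacle: the only thing to be careful about is pointing to the right two ingredients (the Fact for the existence of $P$, and the earlier discussion that $\msCCC(P)$ is the stable set polytope of $G(P)$) and then citing Theorem \ref{thm:gen deg}. The proof can be given in two or three lines and mirrors exactly the derivation of Corollary \ref{cor:comp graph} at the end of Section \ref{sec:symbolic power}.
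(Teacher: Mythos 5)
Your proposal is correct and is exactly the paper's intended argument: the paper disposes of this corollary with the single line ``By the same argument as the end of \S3,'' meaning precisely the combination of the Gilmore--Hoffmann Fact (to produce a poset $P$ with $G=G(P)$), the identification of the stable set polytope of $G$ with $\msCCC(P)$, and an application of Theorem 5.5. Nothing is missing.
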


%===========================

%

\end{document}